\newtheorem{theorem}{Theorem}[section]
\newtheorem{thm}[theorem]{Theorem}
\newtheorem{fact}[theorem]{Fact}
\newtheorem{proposition}[theorem]{Proposition}
\newtheorem{remark}[theorem]{Remark}
\newtheorem{lemma}[theorem]{Lemma}
\newtheorem{cor}[theorem]{Corollary}
\newtheorem{question}[theorem]{Question}
\theoremstyle{definition}
\newtheorem{definition}[theorem]{Definition}
\newcommand{\DII}{\Delta^0_2}
\newcommand{\QQ}{{\mathbb{Q}}}
\newcommand{\sub}{\subseteq}
\newcommand{\sN}[1]{_{#1\in \omega}}
\newcommand{\uhr}[1]{\! \upharpoonright_{#1}}
\newcommand{\SI}[1]{\Sigma^0_{#1}}
\newcommand{\PI}[1]{\Pi^0_{#1}}
\newcommand{\bi}{\begin{itemize}}
\newcommand{\ei}{\end{itemize}}
\newcommand{\bc}{\begin{center}}
\newcommand{\ec}{\end{center}}
\newcommand{\ES}{\emptyset}
\newcommand{\estring}{\emptyset}
\newcommand{\ex}{\exists}
\newcommand{\fa}{\forall}
\newcommand{\la}{\langle}
\newcommand{\ra}{\rangle}
\newcommand{\n}{\noindent}
\newcommand{\vsps}{\vspace{3pt}}
\newcommand{\lwtt}{\le_{\mathrm{wtt}}}
\newcommand{\aaa}{\alpha}
\newcommand{\w}{\omega}
\def\bfz{\boldsymbol{0}}
\newcommand\+[1]{\mathcal{#1}}
\newcommand{\lra}{\leftrightarrow}
\newcommand{\PP}{\ensuremath{\mathrm{P}}}
\newcommand{\DTIME}{\ensuremath{\mathrm{D{\scriptstyle TIME}}}}
\newcommand{\set}[2]{\ensuremath{ \{ #1 : #2 \} }}
\newcommand{\Ece}[1]{\ensuremath{E^{ce}_{#1}}}
 \renewcommand{\deg}{\mbox{deg}}
\newenvironment{pf}{\begin{trivlist}\item[\hskip\labelsep
{\it Proof.}]}{\end{trivlist}}
\begin{document}

\title[Complexity of equivalence relations and preorders]{Complexity of  equivalence relations and preorders from computability theory}

\author[E. Ianovski]{Egor Ianovski}
\address{Department of Computer Science,  University of Oxford, Wolfson Building, Parks Road, Oxford OX1 3QD  UK.}
\email{egor.ianovski@cs.ox.ac.uk }

\author[R. Miller]{Russell Miller}
\address{Department of Mathematics, Queens College, 65-30 Kissena Blvd.,
Flushing, NY 11367  USA; \& Ph.D.\ Programs in Mathematics \& Computer Science,
CUNY Graduate Center, 365 Fifth Avenue, New York, NY 10016  USA.}
\email{ Russell.Miller@qc.cuny.edu}

\author[K.M. Ng]{Keng Meng Ng}
\address{Division of Mathematical Sciences, School of Physical \& Mathematical Sciences,
Nanyang Technological University, 21 Nanyang Link, Singapore}
\email{kmng@ntu.edu.sg}

\author[A. Nies]{Andr\'e Nies}
\address{Department of Computer Science, University of Auckland, Private
Bag 92019, Auckland, New Zealand.}
\email{andre@cs.auckland.ac.nz}


\thanks{Miller was
partially supported by Grant \# DMS -- 1001306 from
the National Science Foundation, and
by the European Science Foundation.
Nies  is supported by the Marsden Fund of New Zealand. Miller and Nies were also supported by the  Isaac Newton Institute as  visiting fellows.}


\makeatother
\subjclass[2010]{Primary 03D78; Secondary 03D32}

\keywords{computable reducibility, computability theory, equivalence relations, $m$-reducibility, recursion theory}

\begin{abstract}
	We study the relative complexity of equivalence relations and preorders from computability theory and  complexity theory. Given  binary relations $R, S$, a  componentwise reducibility is defined by   \bc $
R\le S \iff  \ex f   \, \forall x, y \,  [xRy \lra  f(x) Sf(y)].
$\ec
Here $f$ is taken from a suitable class of effective functions.
For us  the relations will be on natural numbers, and $f$ must be computable.   We show that there is a $\PI 1$-complete equivalence relation, but no $\PI k$-complete for $k \ge 2$.
  We show that  $\SI k$ preorders arising naturally in the above-mentioned areas are $\SI k$-complete.  This includes  polynomial time $m$-reducibility on exponential time sets, which is $\SI 2$,   almost inclusion on r.e.\ sets, which is $\SI 3$, and Turing reducibility on r.e.\ sets, which is~$\SI 4$.
\end{abstract}

\maketitle

\section{Introduction}
Mathematicians devote much of their energy to the classification
of mathematical structures.  In this endeavor, the tool most commonly
used is the notion of a \emph{reduction}:  a map reducing a seemingly
more complicated question to a simpler question.  As a simple example,
one classifies the countable rational vector spaces (under the
equivalence relation of isomorphism) using the notion of dimension, by proving
that two such vector spaces are isomorphic if and only if they have the same
dimension over $\QQ$.  More generally, we have the following definition.
\begin{definition}
\label{defn:reduction}
If $E$ and $F$ are equivalence relations on the
domains $D_E$ and $D_F$, a \emph{reduction} from $E$ to $F$
is a function $f:D_E\to D_F$ with the property that, for all $x,y\in D_E$,
$$ x~E~y\iff f(x)~F~f(y).$$
\end{definition}
\n In our example, $E$ is the isomorphism relation on the class $D_E$
of countable rational vector spaces (with domain $\omega$, say, to avoid
set-theoretical issues), and $F$ is the equality relation on the set
$\{ 0,1,2,\ldots,\omega\}$ of possible dimensions of these spaces.

A reduction $f$ of a complicated equivalence relation $E$ to a simpler
equivalence relation $F$ is entirely possible if one allows $f$ to be
sufficiently complex.  (For instance, every equivalence reduction $E$ with exactly
$n$ classes can be reduced to the equality relation on $\{ 1,2,\ldots,n\}$,
but this reduction has the same complexity as $E$.)
Normally, however, the goal is for $f$ to be a readily
understandable function, so that we can actually learn something from the reduction.
In our example, one should ask how hard it is to
compute the dimension of a rational vector space.  It is natural to restrict
the question to computable vector spaces over $\mathbb Q$  (i.e.\ those where the vector addition
is given as a Turing-computable function on the domain $\omega$
of the space).  Yet even when its domain $D_E$ such 
  vector spaces, computing the function which maps each one to its dimension
requires a $\bfz'''$-oracle, hence is not as simple as one might have hoped.
(The reasons why $\bfz'''$ is required can be gleaned from \cite{Calvert:04} 
or \cite{CalvertHarizanovKnightMiller:06}.)

\subsection*{Effective reductions} Reductions are normally ranked by the ease of computing them.
In the context of Borel theory, for instance, a large body of research is devoted
to the study of \emph{Borel reductions} (the standard book  reference is~\cite{Gao:09}).  Here the domains $D_E$ and $D_F$
are the set $2^\omega$  or some other standard Borel space, and a Borel reduction $f$ is a
reduction (from $E$ to $F$, these being equivalence relations on $2^\omega$)
which, viewed as a function from $2^\omega$ to $2^\omega$,
is Borel.  If such a reduction exists, one says that $E$ is \emph{Borel reducible to} $F$,
and writes $E\leq_B F$.
A stronger possible requirement is that $f$ be continuous,
in which case we have (of course) a \emph{continuous reduction}.
In case the reduction is given by a Turing functional from reals to reals,
it is a \emph{(type-2) computable reduction}.

A further body of research is devoted to the study of the same question
for equivalence relations $E$ and $F$ on $\omega$, and reductions
$f:\omega\to\omega$ between them which are computable.
If such a reduction from $E$ to $F$ exists, we say that $E$ is
\emph{computably reducible to} $F$, and write $E\leq_c F$,
or often just $E\leq F$.
These reductions will be the focus of this
paper. Computable reducibility on equivalence relations was perhaps first studied by Ershov \cite{Ershov:77} in a category theoretic setting. 


The main purpose of this paper is to investigate the complexity of equivalence
relations under these reducibilities.
In certain cases we will generalize from
equivalence relations to preorders on $\omega$. We restrict most of our discussion to relatively
low levels of the hierarchy, usually to $\Pi^0_n$ and $\Sigma^0_n$ with $n\leq 4$.
One can focus more closely on very low levels:  Such articles as
\cite{Andrews.Lempp.etal:13,Gao.Gerdes:01,Bernardi.Sorbi:83},
for instance, have dealt exclusively with $\Sigma^0_1$ equivalence relations. The work \cite{Fokina.Friedman.ea:12} considers arithmetical equivalence relations, in particular $\Sigma^0_3$ ones.
At the other extreme, the papers
\cite{Fokina.Friedman.Harizanov.Knight.McCoy.Montalban:10,Fokina.Friedman:11}
consider equivalence relations outside the arithmetical or  even hyperarithmetical, obtaining certain results about $\Sigma^1_1$-completeness
of various equivalence relations on $\omega$.  The work
\cite{Coskey.Hamkins.Miller:12} is focused on much the same
levels of the arithmetical hierarchy as our work here, but through
the prism of Borel equivalence relations on $2^\omega$,
by restricting such relations to the class of computably enumerable
subsets of $\omega$.  A few of their results will be extended here.
Finally, the complexity of equivalence relations under this kind of reducibility has
recently been studied by a number of authors from various disciplines. Within
complexity theory it seems to have been first introduced in
\cite{Fortnow.Grochow:11}, and later studied in \cite{Buss.Chen.Flum.Friedman.Muller:11}.

\subsection*{Completeness} Equivalence relations on $\omega$ are subsets of $\omega^2$, and
so those with arithmetical definitions can be
ranked in the usual hierarchy as $\Sigma^0_n$ and/or $\Pi^0_n$
for various $n\in\omega$.  The question of \emph{completeness}
then naturally arises.  A  $\Sigma^0_n$ equivalence relation $F$ is \emph{complete}
(among  equivalence relations under $\leq_c$)
if   for every $\Sigma^0_n$ equivalence
relation $E$, we have $E\leq_c F$.  $\Pi^0_n$-completeness is defined
similarly.  Much of this paper is devoted to the study of complete     equivalence
relations at various levels of the arithmetical hierarchy, and we will often
speak of their  $\Sigma^0_n$-completeness or $\Pi^0_n$-completeness
without specifying the reducibility $\leq_c$.  At certain times we will need to consider other reducibilities
on subsets of $\omega$, such as Turing reducibility $\leq_T$ or $1$-reducibility
$\leq_1$, and these reducibilities give rise to their own (analogous) notions
of completeness, but when referring to those notions we will always name them
specifically.

The standard notion of an $m$-reduction from one set to another is relevant here,
since computable reducibility on equivalence relations can be seen as one
of two natural ways to extend $m$-reducibility from sets to binary relations.
However when comparing the complexity between equivalence relations, it is more natural to use computable reducibility. Following Definition \ref{defn:reduction}, we define:
\begin{equation}\label{def:rel-reducibility}
A\le_c B\iff \exists\text{~computable~} g \,
\forall x,y \, [ \la x,y \ra \in A\lra \la g(x),g(y)\ra\in B].
\end{equation}

The resulting reducibility has many interesting and, at times, surprising
properties. It can be readily seen that \eqref{def:rel-reducibility} is a stronger
property than $m$-reducibility: $A\le_c B$ via $g$ implies $A\le_m B$ via the function~$f$ given by $f(x,y) =  \la g(x),g(y) \ra$, so a relation
complete for a level of the arithmetic hierarchy in the sense of relations as in~\eqref{def:rel-reducibility} is also complete in the sense of~sets. The converse fails:  in Subsection~\ref{ssec:noPicomplete} we shall see that some levels do not have complete
equivalence relations at all.

It is   easy to see that  for every $n$, there   exists an
equivalence relation $E$ that is  $\Sigma^0_n$-complete under~$\leq_c$:  let $(V_e)\sN e$ be an effective list of all symmetric $\Sigma^0_n$ relations, let $(S_e)\sN e$ be the effective list of their transitive closure, and let $E = \bigoplus_e S_e $ be their effective sum. 
In Section~\ref{sec:Pi01complete} we demonstrate the existence of $\PI 1$-complete
equivalence relations, which is not nearly as obvious. We give a natural example from complexity, namely equality of quadratic time computable functions. In contrast,  we also show that for $n\geq
2$, no equivalence relation is $\Pi^0_n$-complete.

\subsection*{Preorders} Equivalence relations were the first context in which reductions arose,
but the notion of a reduction can be applied equally well to any pair
of binary relations:  The $E$ and $F$ in Definition \ref{defn:reduction}
need not be required to be equivalence relations.  In fact, a reduction
could be considered as a homomorphism of structures in the language
containing a single binary relation symbol, whether or not that symbol
defines an equivalence relation on either structure.  Preorders
(i.e.\ reflexive transitive binary relations $\preceq$, with $x\preceq y\preceq x$
allowed even when $x\neq y$) form a natural collection of binary relations
on which to study reductions, and in Section~\ref{sec:preorders} we will turn
to the question of computable reducibility on preorders,
with some interesting uses of effectively inseparable Boolean algebras.
In this way we   obtain some examples of natural
$\Sigma^0_n$-complete preorders  from
  computability theory.   For instance, almost inclusion $\sub ^*$ of r.e.\ sets, and weak truth table reducibility on r.e.\ sets, are $\SI 3$ complete preorders.  To obtain corresponding completeness  results for equivalence relations, we use the  easy Fact~\ref{fact:preorder} that completeness of a preorder $P$ at some level of the arithmetical hierarchy implies completeness of the corresponding symmetric fragment which is an equivalence relation.
Using this   we obtain, for instance,  the $\SI 3$ completeness  of several
of the equivalence relations considered in \cite{Coskey.Hamkins.Miller:12}.
In Section
\ref{sec:Sigmacomplete}  we show that Turing reducibilitiy on r.e.\ sets is a $\SI 4$-complete preorder, and hence Turing equivalence on r.e.\ sets is also $\SI 4 $-complete as an equivalence relation.  Using a direct construction, Fokina, Friedman and Nies \cite{Fokina.Friedman.ea:12} have proved that $1$-equivalence and $m$-equivalence on r.e.\ sets are $\SI 3$-complete.

Our results can be applied to obtain completeness results for preorders and equivalence relations from  effective algebra. For instance, Ianovski \cite{Ianovski:12} has proved the $\SI 3$-completeness of effective embeddability    for computable  subgroups of the additive group of rationals by reducing almost inclusion of r.e.\ sets. On the other hand, the
 completeness of  several natural equivalence relations, such as isomorphism of finitely presented groups, remains open. Such questions are discussed in the final section.

The articles already cited here are among  the principal references for  this subject.  For ordinary computability-theoretic questions,
we suggest \cite{Soare:87}, a second edition of which is to appear  soon.

\section{Preliminaries} \label{s:prelim}  Let $X^{[2]}$ denote the unordered pairs  of elements of $X$. A set $R \sub X^{[2]}$ is \emph{transitive} if $\{u,v\}, \{v,w\} \in R$ and $u \neq w$ implies $\{u, w\} \in R$.  We view equivalence relations as transitive subsets of $\w^{[2]}$.   

 \begin{fact}\label{fact:preorder}
Let $P$ be a preorder, $E=\{ \{ x,y\} \colon \, \la x,y \ra \in P$ and $\la y,x\ra \in P\}$, and $n\in\omega$. If $P$ is complete among $\Sigma^0_n$ {\rm [$\PI n$]} preorders under $\leq_c$ then $E$ is complete among $\Sigma^0_n$ {\rm [$\PI n$]} equivalence relations under $\leq_c$.
\end{fact}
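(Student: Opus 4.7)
The plan is to observe that every equivalence relation is, in particular, a preorder of the same arithmetical complexity, and then to show that the very same computable function that reduces a given equivalence relation $E'$ to $P$ (as preorders) also reduces $E'$ to $E$ (as equivalence relations, with $E$ being the symmetric fragment of $P$).

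More concretely, fix a $\Sigma^0_n$ (respectively $\Pi^0_n$) equivalence relation $E'$ on $\omega$. Since $E'$ is reflexive and transitive, it is a preorder at the same level of the arithmetical hierarchy, so by completeness of $P$ there is a computable $g$ with $\langle x,y\rangle\in E' \iff \langle g(x),g(y)\rangle\in P$ for all $x,y\in\omega$. I claim this same $g$ witnesses $E'\leq_c E$.

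For the forward direction, if $\langle x,y\rangle\in E'$ then by symmetry of $E'$ also $\langle y,x\rangle\in E'$; applying the reduction to $P$ twice yields $\langle g(x),g(y)\rangle\in P$ and $\langle g(y),g(x)\rangle\in P$, hence $\{g(x),g(y)\}\in E$. For the converse, if $\{g(x),g(y)\}\in E$ then in particular $\langle g(x),g(y)\rangle\in P$, which by the reduction gives $\langle x,y\rangle\in E'$. Thus $E'\leq_c E$ via $g$, and since $E'$ was an arbitrary $\Sigma^0_n$ (resp.\ $\Pi^0_n$) equivalence relation, $E$ is complete at that level among equivalence relations.

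There is no real obstacle here: the argument is essentially the trivial observation that equivalence relations sit inside the class of preorders, together with the fact that the definition of $E$ from $P$ matches exactly how the reduction behaves on a symmetric input. The only point worth flagging is that $E$ itself must indeed be a $\Sigma^0_n$ (resp.\ $\Pi^0_n$) equivalence relation in order for the completeness statement to be meaningful; this is clear since $\{x,y\}\in E$ is defined by a conjunction of two formulas of complexity matching that of $P$, and reflexivity, symmetry, and transitivity of $E$ follow immediately from the preorder axioms for $P$.
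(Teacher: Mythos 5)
Your proof is correct and follows essentially the same route as the paper: view the given equivalence relation as a preorder, invoke the completeness of $P$ to get a computable reduction, and check that this same function reduces it to the symmetric fragment $E$. The paper merely leaves the two-directional verification (using symmetry of the equivalence relation) and the observation that $E$ has the right complexity implicit, which you have spelled out.
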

\begin{proof} We give the proof for $\SI n$. Note that $E$ is clearly $\Sigma^0_n$. Let $F$ be an arbitrary $\Sigma^0_n$ equivalence relation. In particular $F$ can be seen as  a preorder and so there is a computable function $f$ such that for every $x,y$, $ \{ x,y \} \in F$ iff $\la f(x),f(y)\ra \in P$. Then $f$ witnesses that $F\leq_c E$.
\end{proof}
In parts of the Subsections \ref{ss:Pi 1 complete} and \ref{ss:Preord} we will assume familiarity with the basic notions of computational complexity theory; see for instance \cite{Balcazar.Daz.ea:88}.
We consider languages $A \sub \Sigma^*$ for the  alphabet $\Sigma= \{0,1\}$. Recall that for a function $h \colon \omega \to \omega$, a language $A$ is \emph{computable in time $h$} if there is an $O(h)$ time-bounded multitape Turing machine (TM)  deciding membership in $A$. The class of such languages is denoted $\DTIME(h)$. Languages in $\DTIME(n)$ are also called \emph{linear time}.

	Recall that a function $h$ from numbers to strings with $h(n) \ge n$  is \emph{time constructible} if $h(n) $ can be computed in $O(h(n))$ steps on a multitape TM.
Suppose  $c \in \omega$ is a constant. If a function $h \colon \omega \to \omega $ is time constructible, then we can equip any given Turing machine with a ``clock'' on an extra tape. This is a   counter initialized at $c \cdot h(n)$ and decremented each step of the given machine. It  stops  the given machine on reaching $0$. Since $h$ is time constructible, the number of steps the clocked machine needs is still within $O(h)$.  Numbering  all Turing machines clocked in this way yields an effective listing  of the class  $\DTIME(h)$.



\section{$\PI n$-completeness for  equivalence relations}\label{sec:Pi01complete}

In this section we show that there is a $\PI 1 $-complete equivalence relation, but no $\PI n$-complete equivalence relation  for $n \ge 2$.

\subsection{$\PI 1$ equivalence relations}  \label{ss:Pi 1 complete}

For each function $f \colon \, \omega \times \omega \to \omega$ and $x \in \omega$, let $f_x$ denote the function $n \mapsto f(x,n)$.
 If $f$ is computable,   $$E_f =\{ \la x,y \ra \colon \, f_x= f_y\}$$ appears to be  a   natural example of a $\PI 1 $ equivalence relation. We show that indeed  every $\PI 1 $ equivalence relation is obtained in this way. A slight extension of the argument yields a function $f$ such that  the corresponding equivalence relation $E_f$ is  $\PI 1 $-complete. We show that $f$ can in fact be chosen polynomial time computable.

For each $\PI 1$ equivalence relation $E$, there is a computable   sequence of cofinite relations  $(E_t)\sN t$  contained in $\omega^{[2]}$ with $E = \bigcap_ t E_t$   such that from $t$ we can compute a strong index for the complement of   $E_t$, we have $E_t \supseteq E_{t+1}$ and
\begin{equation} \label{eqn:local_transitive}  E_t \cap [0,t]^{[2]} \text{ is transitive.} \end{equation}

\begin{proposition}  \label{prop:repr_by_binary_function}  For each $\PI 1$ equivalence relation $E$, there is a computable binary  function $f$ such that $E=E_f$, namely, for each $x,y$, we have
\begin{equation} \label{eqn:eqrel_function}  xEy \lra f_x = f_y.\end{equation}  \end{proposition}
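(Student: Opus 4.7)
The plan is to build $f$ directly from the approximation sequence $(E_t)$ provided in the setup. A natural first attempt is to let
\[
f(x,n) \;=\; \min\{z \le t : \{x,z\} \in E_t \text{ or } z = x\}, \quad\text{where } t = \max(x,n),
\]
so that $f(x,n)$ is the minimum element of $x$'s equivalence class in the transitive partition $P_t$ of $[0,t]$ determined by $E_t \cap [0,t]^{[2]}$. Computability of $f$ is immediate: a strong index for $\omega^{[2]} \setminus E_t$ is computable from $t$, so each membership query is decidable and the minimum search terminates.

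For the easy direction $\neg xEy \Rightarrow f_x \neq f_y$: since $E = \bigcap_t E_t$ there is some $t_0$ with $\{x,y\} \notin E_{t_0}$, and for every $n \ge \max(t_0,x,y)$ transitivity of $E_n \cap [0,n]^{[2]}$ places $x$ and $y$ in distinct classes of $P_n$, so $\min[x]_{P_n} \neq \min[y]_{P_n}$ and hence $f(x,n) \neq f(y,n)$. For the reverse direction in the regime $n \ge \max(x,y)$, both $f$-values use stage $n$, and $\{x,y\} \in E_n$ together with transitivity yields $[x]_{P_n} = [y]_{P_n}$, so equality holds.

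The main obstacle, which I expect to be the crux of the proof, is verifying $f_x = f_y$ for $xEy$ when $n < \max(x,y)$. Say $x \le n < y$: then $f(x,n) = \min[x]_{P_n}$ uses stage $n$ but $f(y,n) = \min[y]_{P_y}$ uses stage $y > n$. Because $E_y \subseteq E_n$, a small representative $z$ of $[x]_{P_n}$ may have been separated from $x$ by stage $y$, giving $\min[x]_{P_y} > \min[x]_{P_n}$ and breaking equality. A concrete scenario: $z < x$ with $\{x,z\} \in E_n$ but $\{x,z\} \notin E_y$ (so $\neg xEz$ is witnessed only after stage $n$). Thus the naive choice of stage $t = \max(x,n)$ is insufficient, and the construction must be refined so that $f(x,n)$ encodes an $E$-invariant piece of data—for example, a strong index for the finite set of elements of $[0,n]$ witnessed non-equivalent to $x$ at an appropriately chosen stage $t(x,n)$, or a finite record of the trajectory $(\min[x]_{P_s})_{s \le n}$. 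Arranging such a stage function to synchronize across the (possibly infinite) class $[x]_E$ without access to the unknown companion $y$ is the delicate engineering step I expect to occupy the bulk of the proof.
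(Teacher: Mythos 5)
There is a genuine gap: you correctly isolate the crux (the stage mismatch when $x E y$ but $n < \max(x,y)$, where your naive $f$ evaluates the two arguments against different, possibly strictly finer, approximations), but you do not supply the idea that resolves it. Your closing suggestions --- encoding a strong index for the witnessed non-equivalences at some stage $t(x,n)$, or the trajectory of minima $(\min[x]_{P_s})_{s\le n}$ --- are not worked out and run into exactly the synchronization problem you name: elements of the same $E$-class enter the partitions at different times, so these records are not obviously constant on $E$-classes, and no verification is attempted. So the proposal establishes the easy direction and the case $n\ge\max(x,y)$ for a construction that is then conceded to be insufficient, and leaves the main case open.

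The paper closes this gap with a small but essential twist: define $f$ by recursion on $x$, setting $f(x,n)=f(r,n)$ where $r$ is the least element with $r\,E_{\max(x,n)}\,x$, provided $r<x$, and $f(x,n)=x$ otherwise. The point is that the recursive call $f(r,n)$ is evaluated at its own stage $\max(r,n)\le\max(x,n)$, i.e.\ against a coarser (earlier) approximation, so the value chains down through smaller representatives rather than being read off at a single stage. One then proves by induction on $x$ that for all $y<x$ and all $n$, $y\,E_{\max(x,n)}\,x$ implies $f_y(n)=f_x(n)$ (both $x$ and $y$ chain to the same $r\le y$, using the local transitivity of $E_{\max(x,n)}$ on $[0,\max(x,n)]$ and the monotonicity $E_{\max(y,n)}\supseteq E_{\max(x,n)}$); since $xEy$ gives $y\,E_{\max(x,n)}\,x$ for every $n$, this makes $f(\cdot,n)$ constant on each $E$-class for every $n$ --- precisely the invariance you were aiming for. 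The converse direction follows from the observation that for each fixed $z$ and all sufficiently large $k$, $f(z,k)=\min[z]_E$, so $\neg\, xEy$ forces $f_x(k)\neq f_y(k)$ eventually. Without this recursive definition (or some equivalent device) and its inductive verification, the proposed argument does not yet prove the proposition.
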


\begin{proof}  Let $(E_t)\sN t$ be as in (\ref{eqn:local_transitive}). We define  $f(x,n)$ by recursion on $x$. Let
$$f(x,n) =\displaystyle
         \begin{cases}
            f(r,n), & \text{if  $r< x$ for the  least $r$ such that $rE_{\max(x,n)}  x$,}\\
            x, & \text{otherwise.}
         \end{cases}
$$

Note that for each $z$,  for sufficiently large $k$, we have $f(z,k) = \min[z]_E$. Here $[z]_E$ is the equivalence class of $z$ with respect to $E$.
\n We verify that (\ref{eqn:eqrel_function}) is satisfied.

For the implication ``$ \to$'', we   show  the following by induction on~$x$.

\vsps

\n {\bf Claim.}  {\it For each $y<x$, for each $n$, if $ yE_{\max(x,n)} x$ then $f_y(n) = f_x(n)$. }

\vsps

\n To see this, let $r<x$ be as in the definition of $f(x,n)$. Then $r \le y<x$. By definition of $f$ we have  $f_x(n) = f_r(n)$. By transitivity in  (\ref{eqn:local_transitive}), we have $r E_{\max (x,n)} y$, and hence   $r E_{\max (y,n) } y$. Then by inductive hypothesis, $f_r(n) = f_y(n)$.

For the implication ``$ \leftarrow$'', suppose that  $y \lnot E x$.   Then $f_y(k) \neq f_x(k)$  for sufficiently large $k$ by the remark after the definition of~$f$ above.
\end{proof}

As a corollary we obtain a   presentation of $\PI 1$ equivalence relations as the uniform intersection of recursive ones.
\begin{cor} \label{cor:rep_Pi 1} For each $\PI 1$ equivalence relation $E$, there is a uniformly recursive sequence $(F_n) \sN n$ of equivalence relations such that $F_n \supseteq F_{n+1}$ and $E = \bigcap_n F_n$. \end{cor}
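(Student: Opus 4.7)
The plan is to invoke Proposition~\ref{prop:repr_by_binary_function} directly and then define the $F_n$ as the ``finite approximations'' to equality of the functions $f_x$. Concretely, fix a computable binary $f$ with $E = E_f$, i.e.\ $x E y \lra f_x = f_y$. For each $n$, set
\[
F_n = \bigl\{ \{x,y\} \in \omega^{[2]} : f(x,i) = f(y,i) \text{ for all } i \le n \bigr\}.
\]

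First I would observe that each $F_n$ is an equivalence relation: it is the pullback of equality on $\omega^{n+1}$ under the computable map $x \mapsto \langle f(x,0),\ldots,f(x,n)\rangle$, and the pullback of an equivalence relation under any function is an equivalence relation. Uniform recursiveness of the sequence $(F_n)\sN n$ is immediate, since to decide $\{x,y\}\in F_n$ one only needs to compute the $2(n+1)$ values $f(x,0),\ldots,f(x,n),f(y,0),\ldots,f(y,n)$ and compare them.

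Next, $F_n \supseteq F_{n+1}$ holds trivially, since membership in $F_{n+1}$ imposes strictly more equality constraints than membership in $F_n$.

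Finally, for the identity $E = \bigcap_n F_n$: a pair $\{x,y\}$ lies in $\bigcap_n F_n$ iff $f(x,i) = f(y,i)$ for every $i \in \omega$, iff $f_x = f_y$, iff $x E y$ by the choice of $f$. There is no real obstacle; the entire work has been done in the proof of Proposition~\ref{prop:repr_by_binary_function}, and the corollary is just repackaging $E_f$ as the intersection of the uniformly computable ``equal on the first $n+1$ values'' equivalence relations.
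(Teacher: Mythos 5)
Your proof is correct and is essentially identical to the paper's: the paper also sets $F_n = \{\la x,y\ra : f_x \uhr n = f_y \uhr n\}$ for the $f$ given by Proposition~\ref{prop:repr_by_binary_function}, differing from your definition only by an irrelevant shift of index. Nothing further is needed.
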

\begin{proof} Let $E= E_f$  for a computable function $f$. Let \bc $F_n = \{ \la x, y \ra \colon \, f_x \uhr n = f_y \uhr n\}$,  \ec
where $g\uhr n$ denotes the tuple $\la g(0), \ldots, g(n-1)\ra$. \end{proof}
We obtain a $\PI 1$-complete equivalence relation by exposing a uniformity in the proof of Proposition~\ref{prop:repr_by_binary_function}.
\begin{theorem} \label{thm:Pi1_complete} There is a computable binary function $g$ such that the equivalence relation
  $E_g =\{ \la x,y \ra \colon \, g_x= g_y\}$ is $\PI 1 $-complete. \end{theorem}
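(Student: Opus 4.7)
My plan is to uniformize the construction of Proposition \ref{prop:repr_by_binary_function}. The function $f$ there is defined by a recursion that is uniform in the approximating sequence $(E_t)$. So if we fix a uniformly computable double sequence $(E^e_t)_{e,t\in\w}$ such that every $\PI 1$ equivalence relation arises as $E^e := \bigcap_t E^e_t$ for some $e$, then the recursion yields, uniformly in $e$, a computable family $f^e(x,n)$ with $E^e = E_{f^e}$.

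Given such a listing, I would use a computable pairing $\la \cdot, \cdot \ra$ to set
\[
   g(\la e, x\ra, n) = \la e, f^e(x,n) \ra.
\]
Tagging the output with $e$ guarantees that $g_{\la e, x\ra}$ and $g_{\la e', y\ra}$ disagree at every $n$ whenever $e \neq e'$. Consequently $g_{\la e,x\ra} = g_{\la e,y\ra} \lra f^e_x = f^e_y \lra x E^e y$. For any $\PI 1$ equivalence relation $F$, choose $e$ with $E^e = F$; then $x \mapsto \la e, x\ra$ is a computable reduction of $F$ to $E_g$, establishing $\PI 1$-completeness.

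The main obstacle is producing the uniform listing $(E^e_t)_{e,t}$ that in fact covers every $\PI 1$ equivalence relation. A naive enumeration via arbitrary r.e.\ sets of pairs fails because such a set need not be the complement of an equivalence relation, and a careless repair of transitivity violations may add pairs not in the intended complement. The plan is to enumerate clocked partial computable functions $\psi_e$ whose intended output at stage $t$ is a strong index for the finite complement of $E^e_t \cap [0,t]^{[2]}$; the construction verifies on the fly that each output extends the previous approximation, keeps $E^e_t$ cofinite, and preserves transitivity on $[0,t]^{[2]}$, and substitutes a safe fallback whenever verification fails or the clock expires. The observations preceding Proposition \ref{prop:repr_by_binary_function} guarantee that every $\PI 1$ equivalence relation $F$ admits an approximation $(F_t)$ satisfying the required conditions, and by a standard padding argument this approximation is computed by some $\psi_e$ whose clock eventually allows the verification to succeed at every stage, so that $\bigcap_t E^e_t = F$. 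With the listing in hand, the rest of the proof proceeds as above.
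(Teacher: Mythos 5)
Your overall architecture matches the paper's: uniformize the recursion of Proposition~\ref{prop:repr_by_binary_function} over an indexed family, let $g$ act on columns, and reduce a given $\PI 1$ equivalence relation via $x \mapsto \la e, x\ra$ (tagging the output with $e$ is harmless but unnecessary, since the reduction maps into a single column). The gap is in how you produce the uniform family of approximation sequences. The paper simply sets $E^i = \omega^{[2]} - W_i$ for \emph{every} $i$ and runs the approximation construction directly on the enumeration of $W_i$; the construction is total for every index, and faithfulness ($\bigcap_t E^i_t = E^i$) is only needed when $E^i$ happens to be transitive. Bad indices are harmless because $E_g$ is an equivalence relation automatically, so no verification, clocks, or fallbacks are needed. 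Your reason for rejecting this route (that an arbitrary r.e.\ set of pairs need not be the complement of an equivalence relation) is a misconception: one never has to repair the limit relation, only to keep the approximating sequence structurally well-behaved at every index.

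The concrete flaw is the coverage claim for your clocked listing: ``by a standard padding argument this approximation is computed by some $\psi_e$ whose clock eventually allows the verification to succeed at every stage.'' Padding changes indices, not running times, and no uniformly computable family of clocks can bound all total computable functions: given uniformly computable bounds $(h_e)$, the function $t \mapsto 1+\max_{e\le t} h_e(t)$ is computable and escapes every $h_e$. The approximation guaranteed by the observations preceding Proposition~\ref{prop:repr_by_binary_function} carries no time bound, so if clock expiry triggers a permanent fallback, the target relation $F$ may have no index $e$ in your listing with $\bigcap_t E^e_t = F$; repairing this requires showing that every $\PI 1$ equivalence relation admits an approximation computable within a fixed uniform time bound, which is most naturally done by computing it directly from an r.e.\ index of the complement --- at which point you have reproduced the paper's argument and the clock/verification machinery is superfluous. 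If instead your fallback means a temporary delay (waiting out a slow but total $\psi_e$), then at stage $t$ you are forced to use some earlier verified approximation $F_s$ with $s<t$, and transitivity of $F_s$ on $[0,t]^{[2]}$ does not follow from transitivity on $[0,s]^{[2]}$ (an element in the interval $(s,t]$, none of whose pairs have been removed, can link two already-separated elements below $s$); since condition~(\ref{eqn:local_transitive}) at every index is exactly what the forward direction of the verification of Proposition~\ref{prop:repr_by_binary_function} uses, this reading also leaves a hole unless the recursion is re-indexed, which you do not address.
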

\begin{proof} Let $E^i = \omega^{[2]} - W_i$ where the $i$-th r.e.\ set $W_i$ is viewed as a subset of $\omega^{[2]}$. Uniformly in $i$ we may obtain a computable  sequence of strong indices $(E^i_t)\sN t$ as the one used in the proof of Proposition~\ref{prop:repr_by_binary_function}  such that $E^i = \bigcap_t E^i_t$ whenever  $E^i$ is transitive. Now define $g(\la i,x\ra,n)$ to have the value $f(x,n)$ as above where $E=E^i$. By the argument in that proof, if $E^i$ is transitive then $E^i \le_c E_g$ via the function $x \mapsto \la i,x\ra$.
\end{proof}

We can in fact obtain   in Theorem~\ref{thm:Pi1_complete}  a function on binary strings that is   quadratic time computable (see Section~\ref{s:prelim}).
We thank Moritz M\"uller at the  Kurt G\"odel Research Institute in  Vienna  for suggesting a simplified proof which we provide at the end of this subsection.

\begin{lemma}\label{lem:Mueller}
	For each computable  binary function $g$, there is a quadratic time computable binary function $G$ defined on strings over $\{0,1\}$,  and a computable unary function $p$ mapping numbers to strings,  such that $g(x,n) = G(p(x), p(n))$, and $G(w,v)=0$ for any string $v$ not of the form $p(n)$.
\end{lemma}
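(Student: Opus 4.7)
The plan is to use padding: $p(n)$ will consist of an encoding of $n$ followed by a long block of zeros, chosen long enough that $g(x,n)$ can be computed from $p(x)$ and $p(n)$ in time linear in $|p(x)|+|p(n)|$, and hence certainly quadratic in the input size to $G$. Fix a multitape Turing machine $N$ computing $g$, with running time $T_N(x,n)$. The standard fact that every computable function is majorized by a time-constructible function yields a monotone time-constructible $h\colon \w\to\w$ such that $h(k)\ge k+T_N(x,n)$ for all $x,n\le k$. Let $\la n\ra$ denote a fixed self-delimiting binary encoding of $n$ and define $p(n)=\la n\ra 0^{h(n)}$. Then $p$ is computable, strictly length-monotone, and $|p(n)|\ge h(n)$.

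I would then specify $G$ on input $(w,v)$ as the following multitape procedure. First attempt to read a self-delimited prefix $\la n\ra$ from $v$ and determine whether the remainder of $v$ is exactly $0^{h(n)}$; using the time-constructibility of $h$ with a clock set to $O(|v|)$ steps, this check runs in time $O(|v|)$ and succeeds iff $v=p(n)$, in which case it recovers $n$. If anything goes wrong, output $0$. Perform the analogous parse on $w$, also outputting $0$ if it fails (the lemma does not constrain that case). Otherwise we have $w=p(x)$ and $v=p(n)$, and we simulate $N$ directly on $(x,n)$ on additional tapes with a clock of $|w|+|v|$ steps. By monotonicity of $h$ and our choice of $h$,
\[
T_N(x,n)\le h(\max(x,n))\le |p(\max(x,n))|\le |w|+|v|,
\]
so the clock never expires and the simulation outputs $g(x,n)$.

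Each of the three phases runs in time $O(|w|+|v|)$, so $G$ is in fact linear-time, hence quadratic-time, computable on a multitape Turing machine. Correctness is immediate from the construction: $G(p(x),p(n))=g(x,n)$ by the simulation step, and $G(w,v)=0$ whenever the parse of $v$ fails, which covers every $v$ not of the form $p(n)$. The only conceptually non-routine ingredient is the choice of a time-constructible majorant $h$ of $T_N$; once that is in hand, the proof is a direct combination of self-delimiting encodings and the clocked multitape simulation described in the Preliminaries. The main (mild) obstacle is therefore just the bookkeeping that certifies the padding check runs in $O(|v|)$ rather than in $O(h(n))$ when $h(n)$ happens to exceed $|v|$: this is handled uniformly by clocking the evaluation of $h(n)$ at $O(|v|)$ steps and declaring a mismatch as soon as the clock expires, which is legitimate because time-constructibility guarantees $h(n)$ is produced within $O(h(n))$ steps, so the clock expires precisely when $h(n)>k$.
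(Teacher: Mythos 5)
Your proposal is correct and follows essentially the same route as the paper: pad $p(n)$ by a block whose length is a monotone time-constructible majorant $h$ of the running time of $g$, verify the padding by a clocked computation (declaring mismatch if the clock expires, which is sound since time-constructibility guarantees $h(n)$ would have been produced within the allotted $O(|v|)$ steps when $h(n)\le|v|$), and then run $g(x,n)$ within time linear in $|w|+|v|$. The only differences are cosmetic — a self-delimiting binary prefix with $0$-padding instead of the paper's unary $1^x01^{h(x)}$, and invoking the standard majorization fact rather than defining $h(n)$ as the step count of $g(n,n)$ — so no further comment is needed.
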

Unary quadratic time functions over the alphabet $\{0,1\}$ are given by indices for multitape  Turing machines (TM)  that are equipped with a counter forcing them to stop  in time $O(n^2)$, where $n$ is the input length.
\begin{theorem}\label{thm: quadratic time} (i) There is a quadratic time computable binary function $G$ such that $E_G$ is $\PI 1 $-complete. (ii)~Equality of unary quadratic time computable  functions is a $\PI 1$-complete equivalence relation.
\end{theorem}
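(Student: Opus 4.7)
The plan is to apply the padding from Lemma~\ref{lem:Mueller} to the computable binary function $g$ furnished by Theorem~\ref{thm:Pi1_complete}. This yields a quadratic time computable string function $G$ together with a computable padding $p$ satisfying $g(x,n)=G(p(x),p(n))$ and $G(w,v)=0$ whenever $v$ is not in the range of $p$.

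For part~(i), I will verify that for every $\PI 1$ equivalence relation $E^i$ in the listing used in the proof of Theorem~\ref{thm:Pi1_complete}, the computable map $x\mapsto p(\la i,x\ra)$ witnesses $E^i\le_c E_G$. If $\la x,y\ra\in E^i$, then $g_{\la i,x\ra}=g_{\la i,y\ra}$ by Theorem~\ref{thm:Pi1_complete}; translating through the identity $G(p(u),p(n))=g(u,n)$ shows that the sections $G_{p(\la i,x\ra)}$ and $G_{p(\la i,y\ra)}$ agree on the range of $p$, and since they are identically $0$ off that range, they are equal. Conversely, equality of the two sections restricted to arguments of the form $p(n)$ already forces $g_{\la i,x\ra}=g_{\la i,y\ra}$, hence $\la x,y\ra\in E^i$. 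Therefore $E_G$ is $\PI 1$-complete.

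For part~(ii), I would convert the binary function $G$ into an effective listing of unary quadratic time machines, one per string $w$. Because $G$ is quadratic time computable, fix a multitape TM $M$ and a constant $c$ so that $M$ computes $G(w,v)$ in at most $c(|w|+|v|)^2$ steps, a quantity that is bounded by $c_w(|v|+1)^2$ for a constant $c_w$ depending computably on $w$. From $w$ I effectively produce an index $e(w)$ of the multitape TM that hard-codes $w$, simulates $M$ on the pair $(w,v)$ with $v$ supplied on its input tape, and is clocked by the time-constructible counter $c_w(|v|+1)^2$ in the sense of Section~\ref{s:prelim}. This machine computes the unary function $v\mapsto G(w,v)$ in quadratic time, and the machines $e(w)$ and $e(w')$ compute equal unary functions iff $G_w=G_{w'}$. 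Consequently $x\mapsto e(p(\la i,x\ra))$ reduces any $\PI 1$ equivalence relation $E^i$ to equality of unary quadratic time computable functions.

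The principal delicate point is the uniform transition in part~(ii) from the two-variable quadratic bound on $G$ to the one-variable quadratic bound on each section $G_w$: one must absorb the $|w|^2$ term into a constant $c_w$ that is itself effectively computable from $w$ and then equip the simulating TM with a time-constructible clock so that the resulting machine qualifies as quadratic time in the sense of Section~\ref{s:prelim}. Everything else is a direct transcription of Theorem~\ref{thm:Pi1_complete} through the padding lemma, using that $\PI 1$-hardness is preserved under composition with a total computable function.
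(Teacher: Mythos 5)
Your proposal is correct and takes essentially the same route as the paper: part (i) is the composition of the reduction from Theorem~\ref{thm:Pi1_complete} with the padding function $p$ of Lemma~\ref{lem:Mueller}, and part (ii) is the paper's observation that from $w$ one can effectively compute a clocked index for the unary section $G_w$ (you merely spell out the absorption of the $|w|^2$ term into a constant $c_w$, which the paper leaves implicit). The only point the paper states that you omit is the one-line upper bound for (ii) — that equality of unary quadratic time computable functions is itself $\PI 1$ because every clocked machine halts on every input within its quadratic bound — which is immediate from the indexing convention of Section~\ref{s:prelim} that you already invoke.
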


\begin{proof}[Proof of Theorem] 	\emph{(i)} Let  $G$ be the   function obtained from the function  $g$ of Theorem~\ref{thm:Pi1_complete} according to the lemma.  Let $p$ be as in the lemma. Then $x E_g y \lra G_{p(x)} = G_{p(y)}$. Since $E_g$ is complete, so is $E_G$.
	
\n	\emph{(ii)} Since all the Turing machines considered stop on every   input in a quadratic time bound, the equivalence relations is $\PI 1$. The  completeness follows from (i)  since from a string $w$ we can compute an index for a   quadratic time TM computing $G_w$.
\end{proof}

\begin{proof}[Proof of Lemma~\ref{lem:Mueller}]
	Let $g(x,y)$  (with $x,y$ represented in unary) be computable in time $t(x,y)$ where  $t$ is a computable function that is  increasing   in both  $x$ and  $y$.
We want a time constructible $h$ satisfying $t(n,k)\le h(n)+h(k)$. This can be achieved by having $h(n)$ run $g(n,n)$ and counting the number of steps.
	This satisfies $t(n,k)\le h(n)+h(k)$ as for $n\le k$, $t(n,k)\le t(k,k)=h(k)$, and $h(n)=m$ can be calculated in time quadratic in $m$: Place a binary counter
	at the start of the tape and simulate $g$ to the right of it.
	In the worst case scenario for every step of $f$ the machine would iterate over
	$m+\log m$ cells, and whenever the counter grows in size the entire tape
	would need to be shifted right which in total would take $m\log m$,
	coming to $m\log m+m(m+\log m)$ or $O(m^2)$.

	To compute $G(a,b)$, verify whether $a=1^x01^{h(x)}$ and $b=1^n01^{h(n)}$. If so
	output $g(x,n)$, else output 0.
	Observe that $G$ is quadratic time: We can verify whether $a=1^x01^z$ is of the form
	$1^x01^{h(x)}$ by beginning to compute $g(x,x)$, stopping whenever the
	number of steps exceeds $z$.
	If the input is of the right form we can compute $g(x,n)$ in time $t(x,n)\le
	h(x)+h(n)= O(|a|+|b|)$.

	Note that  for every $x,n$,
	\bc  $G(1^x01^{h(x)},1^n01^{h(n)})=g(x,n)$, \ec
	and for
	all other values the function is 0. Thus the function $p(y) =1^y01^{h(y)} $ is as required.
	\end{proof}

	In work independent of ours \cite[Proposition 3.1]{Cholak.etal:2011} the authors
	offer a characterisation of $\PI 1$ preorders:
	Every such preorder is computably isomorphic to the
	inclusion relation on a uniformly computable family of sets. As an equivalence relation is
	merely a symmetric preorder, a $\PI 1$ equivalence relation is then isomorphic
	to equality on such a family. Since sets can be viewed as 0/1 valued functions, this yields another proof of Proposition~\ref{prop:repr_by_binary_function}. Methods similar to the ones employed in the proof of Theorem~\ref{thm: quadratic time} above now  yield a version of that Theorem for the  inclusion of  quadratic time computable languages. 	
	\begin{thm}[\cite{Ianovski:12} based on \cite{Cholak.etal:2011}] \label{thm:Quad_inclusion} Inclusion of  quadratic time computable languages is a complete $\PI 1$ preorder. \end{thm}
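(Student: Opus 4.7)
The plan is to combine the characterisation of $\PI 1$ preorders as uniformly computable families under set inclusion (stated just above the theorem, from \cite{Cholak.etal:2011}) with the padding technique of Lemma~\ref{lem:Mueller}. Given an arbitrary $\PI 1$ preorder $P$, the characterisation supplies a uniformly computable family $(A_i)\sN i$ of subsets of $\w$ together with a computable function witnessing $P \le_c \sub$ on $(A_i)\sN i$. It therefore suffices to produce, uniformly in $i$, an index $e(i)$ of a quadratic-time clocked Turing machine deciding a language $B_i \sub \{0,1\}^*$ in such a way that $A_i \sub A_j \lra B_i \sub B_j$ for all $i,j$.

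To carry this out, let $a(i,n)$ be the uniform decision procedure for $(A_i)\sN i$, running in time $t(i,n)$ for some computable $t$ that is monotone in each argument. Following the construction in Lemma~\ref{lem:Mueller}, I let $h(n)$ count the number of steps taken by $a(n,n)$; then $h$ is time-constructible (with output $m$ computable in $O(m^2)$ steps) and satisfies $t(i,n)\le h(i)+h(n)$. Set $p(n) = 1^n 0 1^{h(n)}$ and
\[ B_i \;=\; \{\, p(n) \colon n \in A_i\,\}. \]
Since $p$ is injective and $B_i \sub \range(p)$, one immediately has $B_i \sub B_j \lra A_i \sub A_j$. Each $B_i$ is decidable in time $O(\ell^2)$ on inputs of length $\ell$ exactly as in Lemma~\ref{lem:Mueller}: first test whether the input has the form $1^n 0 1^z$ with $z = h(n)$ by simulating $a(n,n)$ against a clock of $z$ steps, and if so, run $a(i,n)$, which takes $h(i)+h(n)\le h(i)+\ell$ steps.

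The only subtlety is the uniform bookkeeping of the $i$-dependent constant $h(i)$ in the quadratic bound for $B_i$: the clock attached to $e(i)$ must have a multiplicative constant growing with $i$, yet the construction of this clock has to be uniform in $i$. This is the same issue that the proof of Lemma~\ref{lem:Mueller} already resolves via the time-constructibility of $h$, so once that lemma is in hand a clocked quadratic-time index $e(i)$ can be read off uniformly from $i$, and the theorem follows by a direct adaptation of the argument for Theorem~\ref{thm: quadratic time}.
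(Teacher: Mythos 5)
Your proposal is correct and takes essentially the same route the paper indicates for this theorem: the characterisation of $\Pi^0_1$ preorders as inclusion on a uniformly computable family (from \cite{Cholak.etal:2011}) combined with the padding construction of Lemma~\ref{lem:Mueller} to place that family inside quadratic time, uniformly in the index, which is exactly how the cited proof in \cite{Ianovski:12} proceeds. The only point you leave implicit is the routine observation that inclusion of clocked quadratic-time languages is itself a $\Pi^0_1$ preorder, since membership is uniformly decidable.
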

	For a proof  see \cite[Thm 5.5]{Ianovski:12}.
	Using that  result one can also obtain a stronger form of Cor.\ \ref{cor:rep_Pi 1} where the $F_n$ have at most $2^n$ equivalence classes.


\subsection{For $n\geq 2$ there is no  $\PI n$-complete equivalence relation }
\label{ssec:noPicomplete}

$\PI n$~equivalence relations often  occur naturally. For instance, equality of r.e.\ sets as a relation on  indices is $\PI 2$. See \cite{Coskey.Hamkins.Miller:12} for further examples along these lines.
Computable model theory is a further source of interesting examples.  Consider the class $\+ P$ of computable permutations on $\omega$ with all cycles finite. Isomorphism as a relation on computable indices for structures in this class is $\PI 2$. This equivalence relation is easily seen to be computably equivalent to equality on r.e.\ sets.  Melnikov and Nies \cite{Melnikov.Nies:13}  have studied isometry of compact computable metric spaces, They showed  that it is properly $\PI 3$ in general, and properly $\PI 2$  for effectively compact metric  spaces.

Here we show that there is no $\PI n$-complete equivalence relation  for $n \ge 2$. Intuitively speaking, one would then expect that  the computable reducibility degrees of    $\PI n$ equivalence relations are   complicated.

\begin{thm}
\label{thm:noPi2complete}
For each $\Pi^{0}_2$ equivalence relation $E$ there is a $\Delta^{0}_2$ equivalence relation $L$ such that $L\not\leq_{c}E$.
\end{thm}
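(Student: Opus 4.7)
The plan is to construct $L$ by a priority argument relative to $\emptyset'$, diagonalizing against each total computable function $\varphi_e$ as a candidate reduction; partial $\varphi_e$ fail automatically since a reduction must be total. For each requirement $R_e$ I reserve a computable, infinite, disjoint family of pairs of fresh numbers $(a^e_i,b^e_i)_{i\in\omega}$, with different requirements using pairwise disjoint families. The equivalence $L$ will restrict to identity outside of these pairs and, within each pair, independently merge or split the pair, so that $L$ is automatically an equivalence relation and the construction reduces to choosing the merge-set.

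Since $E\in\PI{2}$, its complement is $\SI{2}$ and hence $\emptyset'$-r.e.; fix a $\emptyset'$-computable enumeration $\neg E=\bigcup_s N_s$. The basic diagonalization rule for $R_e$ is to merge $a^e_i$ with $b^e_i$ exactly when $(\varphi_e(a^e_i),\varphi_e(b^e_i))$ is eventually enumerated in $(N_s)_s$. A case split then verifies $R_e$: either some pair $i$ yields such a witness, in which case $a^e_i L b^e_i$ but $\varphi_e(a^e_i)\not E\varphi_e(b^e_i)$, or no $i$ ever does, in which case every pair stays unmerged yet $\varphi_e(a^e_i)\,E\,\varphi_e(b^e_i)$ holds for every $i$. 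Either outcome contradicts $\varphi_e$ being a reduction.

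The main obstacle that I anticipate is ensuring that the resulting $L$ lies in $\DE{2}$ rather than merely $\SI{2}$ (the naive merge condition ``eventually enumerated'' being prima facie $\SI{2}$). My plan is to refine the construction with a single moving marker per requirement: $R_e$ monitors one currently-active candidate pair at each stage, commits it permanently as merged upon appearance of a witness, and otherwise advances to a new candidate after an appropriately calibrated delay, the abandoned pair being fixed permanently as unmerged. Through careful bookkeeping this yields a computable approximation to $L$ with boundedly many mind-changes per pair, so that Shoenfield's Limit Lemma delivers $L\in\DE{2}$. The crux of the technical work will be to show that the diagonalization against every total $\varphi_e$ still succeeds under this restricted commitment protocol---in particular, that no $\varphi_e$ can escape by having all its witnesses arrive only after the marker has passed the relevant pairs. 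This delicate balance between bounding the mind-changes (so that $L$ is $\DE{2}$) and leaving enough flexibility for the diagonalization to hit every $\varphi_e$ is where I expect the heart of the proof to lie.
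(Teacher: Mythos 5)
Your setup (reserved gadget pairs, merge/split freedom, diagonalization against total $\varphi_e$) is fine, and you correctly locate the difficulty: the natural merge condition ``the images are eventually enumerated into the complement of $E$'' is $\Sigma^0_2$, so the naive $L$ is not $\Delta^0_2$. But the proposed repair is exactly where the proof is missing, and in the form described it does not work. If each pair is watched only during a window whose closing time is determined $\emptyset'$-computably (which is what you need for $L\leq_T\emptyset'$, i.e.\ $L\in\Delta^0_2$), then the adversarial outcome is: every image pair $(\varphi_e(a^e_i),\varphi_e(b^e_i))$ is in fact $E$-inequivalent, but each witness appears in the $\emptyset'$-enumeration only after the window for pair $i$ has closed. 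Then all your gadget pairs end up permanently unmerged with $E$-inequivalent images, which is perfectly consistent with $\varphi_e$ being a reduction, so requirement $R_e$ is simply not met. (Note that your unrefined case split is misleading here: ``no witness is caught'' does not imply ``all images are $E$-equivalent''.) Conversely, if you wait unboundedly on a pair, membership of that pair in $L$ reverts to a genuinely $\Sigma^0_2$ condition, and a $\emptyset'$-approximation with one mind change only gives $\Sigma^0_2$, not $\Delta^0_2$. There is no ``calibration'' of delays available from $\emptyset'$ that catches a true $\Sigma^0_2$ fact in general --- if there were, the uncalibrated condition would already be $\Delta^0_2$ --- so the heart of the proof, which you explicitly defer, is a genuine gap. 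A telling symptom is that your argument never uses that $E$ is transitive, whereas the theorem is proved in the paper by leaning on transitivity essentially.

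The paper escapes the dilemma with a three-point gadget rather than a two-point one. For requirement $e$ it keeps $\langle e,0\rangle$ and $\langle e,1\rangle$ permanently $L$-inequivalent; if $\varphi_e$ is a total reduction, their images $x,y$ must satisfy $x\,\lnot E\,y$, witnessed at some least stage $t$ of the $\emptyset'$-computable approximation $f$ to $E$. The third point is then taken to be $\langle e,t+2\rangle$: encoding the witnessing stage $t$ into the \emph{identity} of the diagonalization point is what keeps every search needed to decide $L$ on a given pair terminating, hence $L\leq_T\emptyset'$. By transitivity of $E$, the image $z$ of $\langle e,t+2\rangle$ is $E$-inequivalent to $x$ or to $y$, and $\emptyset'$ can find out which (the search for the paper's $v$ terminates precisely because of transitivity); $L$ then relates $\langle e,t+2\rangle$ to whichever of $\langle e,0\rangle,\langle e,1\rangle$ has $E$-inequivalent image, giving a merged pair with inequivalent images and defeating $\varphi_e$ outright, with no windows, markers, or limit lemma needed. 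If you want to salvage your write-up, this is the mechanism to import; the two-point ``merge iff images inequivalent'' scheme by itself does not suffice.
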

Let $\aaa$ be a computable ordinal. Relativizing to  $\emptyset^{(\aaa)}$ yields an immediate corollary. We say that $E \leq_{\boldsymbol{0}^{(\aaa)}} F$ if Definition \ref{defn:reduction} holds for $E$ and $F$ for some $f\leq_T \emptyset^{(\aaa)}$.
\begin{cor}
\label{cor:noPincomplete}
Let $\aaa$ be a computable ordinal. No $\Pi^0_{\aaa+2}$ equivalence relation
can be hard for all the $\Delta^0_{\aaa+2}$ equivalence relations
under the  reducibility $\leq_{\boldsymbol{0}^{(\aaa)}}$,
let alone under computable reducibility $\leq_c$.
\qed\end{cor}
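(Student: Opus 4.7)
The plan is to construct $L$ by a $\emptyset'$-recursive finite-injury priority argument, so that $L \leq_T \emptyset'$ and hence $L \in \Delta^0_2$. For each index $e$, I want to meet the requirement $R_e$: $\phi_e$ is not a reduction from $L$ to $E$. Reserve pairwise disjoint witness pairs $(a_e,b_e)$ for each $R_e$ (say $(a_e,b_e)=(2e,2e{+}1)$), and build $L$ so that it relates only pairs within a single reserved set, together with the diagonal; this guarantees $L$ is an equivalence relation on $\omega$ and lets me treat the requirements independently.

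Since $E\in\Pi^0_2$, the complement $\omega^{[2]}\setminus E$ is $\Sigma^0_2$ and hence c.e.\ in $\emptyset'$. The diagonalization idea for $R_e$ is straightforward once we can observe non-$E$ pairs: if $\phi_e(a_e)\not E\,\phi_e(b_e)$, enumerate $(a_e,b_e)$ into $L$, so that $L(a_e,b_e)=1\neq 0=E(\phi_e(a_e),\phi_e(b_e))$; otherwise leave $L(a_e,b_e)=0$, still obtaining a disagreement since then $E(\phi_e(a_e),\phi_e(b_e))=1$. In either case $\phi_e$ fails to be a reduction, so the diagonalization is conceptually clear.

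The main obstacle, and what I expect to be the heart of the proof, is to ensure $L\in\Delta^0_2$ and not merely $L\in\Sigma^0_2$. The rule ``$(a_e,b_e)\in L$ iff $\phi_e(a_e)\not E\,\phi_e(b_e)$'' is $\Sigma^0_2$ on its face and when $E$ is properly $\Pi^0_2$ the resulting $L$ is only $\Sigma^0_2$. To repair this I would use a computable stagewise approximation $L_s$ (driven by the standard $\emptyset'_s$-approximation) together with a sequence of candidate witness pairs $(a_e^k,b_e^k)_k$ per requirement, and a finite-injury bookkeeping scheme: we provisionally set $L_s(a_e^k,b_e^k)=1$ whenever the stage-$s$ enumeration via $\emptyset'_s$ shows a non-$E$ witness, and we revert if later corrections to $\emptyset'_s$ invalidate the observation, at which point $R_e$ moves to a fresh candidate pair. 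The core technical step is to argue that each $L_s(a_e^k,b_e^k)$ changes value only finitely often, so that $L=\lim_s L_s$ pointwise and thus $L\in\Delta^0_2$ by Shoenfield's limit lemma, while simultaneously some witness pair eventually stabilizes at the correct diagonalizing value---or, in the alternative branch where no witness pair ever sees a non-$E$ observation, the default $L=0$ disagrees with $E=1$ at every such pair. Verifying this stabilization, and that ``spurious'' commitments are retracted only finitely often while ``true'' commitments persist, is the main technical content of the proof.
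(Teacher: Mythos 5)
Your overall framing (diagonalize against each potential reduction on a reserved block, with the whole construction recursive in $\emptyset'$, then relativize to $\emptyset^{(\alpha)}$ for the corollary) matches the intent of the paper, which derives the corollary by relativizing the unrelativized theorem. But the finite-injury repair you propose for the $\Delta^0_2$ problem has a genuine gap: the verification dichotomy ``some witness pair stabilizes at the diagonalizing value, or no pair ever sees a non-$E$ observation'' is not exhaustive. There is a third outcome: the strategy for $R_e$ acts on a spurious $\Sigma^0_2$-observation at pair $(a_e^k,b_e^k)$, the observation is invalidated, the pair reverts to $0$ and is permanently abandoned, and this repeats for $k=0,1,2,\dots$ forever. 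Abandoning a pair is not harmless: if the images $\phi_e(a_e^k),\phi_e(b_e^k)$ are in fact non-$E$-related but the true witness shows up only after you have moved on, then the abandoned pair ends with $L=0$ and $E(\phi_e(a_e^k),\phi_e(b_e^k))=0$, i.e.\ in agreement, and no pair ever produces a disagreement. You cannot fix this by staying on the same pair and re-acting on new witnesses, because when the images are truly $E$-related the approximation can present infinitely many spurious witnesses, making $L_s(a_e^k,b_e^k)$ flip infinitely often and destroying $\Delta^0_2$-ness; and you cannot fix it by committing, because for a fixed pair your limit process would have to output the negation of the $\Pi^0_2$ bit $E(\phi_e(a),\phi_e(b))$, which a $\Delta^0_2$ procedure cannot do in general. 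So the tension you correctly identify as ``the heart of the proof'' is not resolved by the proposed bookkeeping; hedging across pairs is exactly where an adversarial $E$ defeats it.

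The paper's proof of the underlying theorem avoids this impasse with a different, injury-free idea that your proposal is missing. One never puts the primary pair $\{\langle e,0\rangle,\langle e,1\rangle\}$ into $L$ at all; if $\varphi_e$ were a reduction, its images $x,y$ would have to be non-$E$-related, hence some least ``time'' $t$ with $f(x,y,t)=0$ exists (where $E=\{\langle x,y\rangle: \forall t\, f(x,y,t)=1\}$ with $f\leq_T\emptyset'$). One then uses the third element $\langle e,t+2\rangle$, whose image $z$, by transitivity of $E$, must fail to be $E$-related to at least one of $x,y$; the least $v$ with $f(x,z,v)=0$ or $f(y,z,v)=0$ is found by a terminating $\emptyset'$-search, and $L$ relates $\langle e,t+2\rangle$ to whichever of $\langle e,0\rangle,\langle e,1\rangle$ has the non-related image. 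Because the candidate time $t$ is encoded in the element $\langle e,t+2\rangle$ itself, membership of each concrete pair in $L$ is decided outright by an $\emptyset'$-computation (no $\Sigma^0_2$ search, no approximation, no injury), so $L\leq_T\emptyset'$, and a disagreement is forced whenever $\varphi_e$ is total. Transitivity of $E$ is used essentially here, and it is the ingredient your construction never invokes. Relativizing that argument to $\emptyset^{(\alpha)}$ (diagonalizing against $\emptyset^{(\alpha)}$-computable reductions and producing $L\leq_T\emptyset^{(\alpha+1)}$) then gives the corollary as stated.
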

  In contrast, there is a $\Pi^1_1$-complete equivalence relation $E$. Similar to the case of $\SI n$   relations,   the class of symmetric $\Pi^1_1$ relations is  uniformly  closed under taking the transitive closure.  Let $(V_e)\sN e$ be an effective list of all symmetric $\Pi^1_1$ relations, let $(S_e)\sN e$ be the effective list of their transitive closure, and let $E = \bigoplus_e S_e$.

\begin{proof}[Proof of Theorem \ref{thm:noPi2complete}]

$E$ be any
$\Pi^{0}_2$ equivalence relation on $\omega$. Since $E$ is $\Pi^{0,\ES'}_1$
we may fix a $\ES'$-computable function $f$ such that $\la x,y\ra\in E$ iff $f(x,y,t)=1$ for every $t$. Now define $L$ as follows.

For $e_0\neq e_1$, $\{ \langle e_0,s_0\rangle,\langle e_1,s_1\rangle\} \not\in L$ for any $s_0,s_1$. For each $e$, we distinguish two cases.

\n {\it Case 1.}  There is a number $t$, chosen least,   such that \bc  $\varphi_e(\langle e,0\rangle)\downarrow=x$, $\varphi_e(\langle e,1\rangle)\downarrow=y$, $\varphi_e(\langle e,t+2\rangle)\downarrow=z$, and $f(x,y,t)=0$. \ec Let  $v$ be  least such that $f(x,z,v)=0$ or $f(y,z,v)=0$.

\vsps

\bi \item If $f(x,z,v)=0$ we  set $\{\langle e,0\rangle,\langle e,t+2\rangle \}\in L$.
\item
If
$f(y,z,v)=0$
we  set $\{\langle e,1\rangle,\langle e,t+2\rangle \}\in L$. \ei
 Declare $\{\langle e,s_0\rangle,\langle e,s_1\rangle\}\not\in L$ for every other pair $s_0\neq s_1$.

\n {\it Case 2.} There is   no such  number $t$. Then we set $\{\langle  e,s_0\rangle,\langle e,s_1\rangle \} \not\in L$ for every $s_0\neq s_1$.

To see  that $L$ is $\DII$, it suffices to note that
for each $e$, if Case 1 applies  then by the transitivity of $E$ the number $v$ exists. Transitivity of $L$  follows from the fact that we relate each element to at most one other element.

 Now suppose that $L\leq_c E$ via the total function $\varphi_e$. Since $\{\langle e,0\rangle,\langle e,1\rangle\}\not\in L$ it follows that the least number $t$ described above exists. By construction we ensure that either $\{\langle e,0\rangle,\langle e,t+2\rangle\}\in L$ and $\{x,z\}\not\in E$, or we have $\{\langle e,1\rangle,\langle e,t+2\rangle\}\in L$ and $\{y,z\}\not\in E$, a contradiction.
\end{proof}

\subsection*{Remark}
Observe that given any two $\Pi^0_{n+2}$ equivalence relations $R,S$, the disjoint union $R\sqcup S=\{\{2n,2m\}\mid n R m\}\cup \{\{ 2n+1,2m+1\} \mid n S m\}$ is also a $\Pi^0_{n+2}$ equivalence relation and we have $R,S\leq_c R\sqcup S$. Hence Theorem \ref{thm:noPi2complete} shows that amongst $\Pi^0_{n+2}$ equivalence relations there can be no maximal element under $\leq_c$.

The following is an immediate consequence of Theorem \ref{thm:noPi2complete}; for preorders we also apply  Fact~\ref{fact:preorder}.
\begin{cor} For each $n \ge 2$, there is no $\PI n$-complete preorder, and no $\Delta^0_n$-complete equivalence relation, or preorder. \end{cor}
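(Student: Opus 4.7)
The plan is to derive all three non-existence assertions from Corollary \ref{cor:noPincomplete} together with Fact \ref{fact:preorder} (and a trivial strengthening of the latter to the class $\Delta^0_n$). Note first that Corollary \ref{cor:noPincomplete}, applied with the finite computable ordinal $\aaa = n-2$, already rules out the existence of any $\PI n$ equivalence relation that is hard (under $\leq_c$) for the class of $\Delta^0_n$ equivalence relations.

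For the non-existence of a $\PI n$-complete preorder, I would argue by contradiction: if $P$ is such a preorder, then by Fact \ref{fact:preorder} its symmetric fragment $E=\{\{x,y\} : \la x,y\ra \in P \text{ and } \la y,x\ra \in P\}$ is a $\PI n$-complete equivalence relation. Since every $\Delta^0_n$ equivalence relation is $\PI n$, such an $E$ would be hard for $\Delta^0_n$ equivalence relations, contradicting Corollary \ref{cor:noPincomplete}. For the non-existence of a $\Delta^0_n$-complete equivalence relation, the same contradiction appears even more directly: such an $E$ is $\PI n$ and by hypothesis $\Delta^0_n$-complete, hence hard for all $\Delta^0_n$ equivalence relations.

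For the third assertion, the non-existence of a $\Delta^0_n$-complete preorder, I first observe that Fact \ref{fact:preorder} extends verbatim with $\Delta^0_n$ in place of $\Sigma^0_n$: given a $\Delta^0_n$-complete preorder $P$ and an arbitrary $\Delta^0_n$ equivalence relation $F$, view $F$ as a $\Delta^0_n$ preorder and take computable $f$ with $\{x,y\}\in F \lra \la f(x),f(y)\ra \in P$; by symmetry of $F$ we also have $\la f(y),f(x)\ra \in P$, so $\{f(x),f(y)\}$ lies in the symmetric fragment $E$ of $P$, which is itself $\Delta^0_n$. Thus $F\leq_c E$, making $E$ a $\Delta^0_n$-complete equivalence relation — contradicting the second assertion just established.

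The only step that is more than bookkeeping is checking that Fact \ref{fact:preorder} survives the passage from $\Sigma^0_n$ (or $\PI n$) to $\Delta^0_n$, but this is immediate from the original proof since it only uses that $F$, regarded as a preorder, lies in the relevant class. All genuine combinatorial content is packaged in Theorem \ref{thm:noPi2complete} and its relativization.
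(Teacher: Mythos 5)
Your proposal is correct and follows essentially the same route as the paper: everything is reduced to Theorem \ref{thm:noPi2complete} (via its relativization, Corollary \ref{cor:noPincomplete}), with Fact \ref{fact:preorder} handling the preorder cases, and your observation that the proof of Fact \ref{fact:preorder} goes through verbatim for $\Delta^0_n$ is exactly the (implicit) step the paper relies on for the $\Delta^0_n$-complete preorder case.
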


There also is no $\PI 2$  analogue of Cor.\ \ref{cor:rep_Pi 1}. Let us say a $\PI 2$  equivalence relation $E$  is \emph{effectively $\PI 2$} if there is a uniformly r.e.\  sequence $(F_n) \sN n$ of equivalence relations such that  $E = \bigcap_n F_n$. (We may also require $F_n \supseteq F_{n+1}$ without loss of generality.)   If $S$ is an r.e.\ nonrecursive set then the $\DII$ equivalence relation with the two classes $S$ and $\omega-S$ is not effectively $\PI 2$.  Note that we can effectively list the effectively $\PI 2$ equivalence relations. By putting together all the effectively $\PI 2$ equivalence relations in the obvious way we see that there is a complete one under computable reducibility.






\section{$\SI k$-complete preorders}
\label{sec:preorders}

The concept of an effectively inseparable   r.e.\  Boolean algebra was  introduced by  Pour-El  and  Kripke \cite[Lemmas 1,2]{Kripke.PourEl:67} when they studied the complexity of logical derivability for recursively axiomatizable theories.  The method was developed in more generality by  Montagna and Sorbi
\cite{Montagna.Sorbi:85}. For instance, in their Theorem 3.1 they showed that the preorder associated with  any effectively inseparable    Boolean algebra is $\SI 1$-complete.   They developed their method further  in order to show that for  a  sufficiently strong arithmetical theory $T$  (such as  a consistent  axiomatizable extensions of PA), for any r.e.\  preorder $\preceq$ there is a $\Sigma_1 $ formula $G$ with $i \preceq j $ iff $T \vdash G(\underline i) \to G(\underline j)$.

We are interested mostly  in semantic preorders  such as reducibilities, which are usually  at higher levels of the arithmetical hierarchy. So  we extend the methods discussed above  in order to  show the   completeness of a  $\SI k$ preorder, using  effectively inseparable  $\SI k$ Boolean algebras. Thereafter we apply it to preorders in the subrecursive setting, which are $\SI 2$,  and to  preorders on the r.e.\ sets arising from computability theory, which are $\SI 3$. Each time,  we naturally embed  an effectively inseparable  $\SI k$ Boolean algebra for the appropriate $k$  into the preorder.

For sets  $X, Y$, we write $X \sub^*Y$ ($X$ is  almost contained in  $Y$) if $X \setminus Y$ is finite. We write $X = ^* Y $ if $X \sub^* Y \sub^* X$. The preorders we will  consider in  this section for the cases $k=2, 3$ are  given  on certain natural classes of sets,  either by almost containment, or by reducibilities,.

\subsection{The method of effectively inseparable $\SI k$ Boolean algebras}

Our proofs rest on
the  following notion, which  is an analogue of creativity  for disjoint pairs of sets. For detail see \cite[II.4.13]{Soare:87}. We     relativize r.e.\ sets to $\ES^{(k-1)}$, thereby obtaining $\SI k$ sets.  Note, however, that the analogue~$g$ of the  productive function remains computable.
\begin{definition}  (see \cite[II.4.13]{Soare:87})  \label{def:Insep_Sk} We say that disjoint $\SI k$  sets $A, B \sub \omega$ are \emph{effectively inseparable} (e.i.) if there is a computable binary function  $g$ (called a productive function) such that for each $p,q \in \omega$, \bc if   $A\sub W_p^{\ES^{(k-1)}}$,  $B\sub W_q^{\ES^{(k-1)}}$ and $W_p^{\ES^{(k-1)}} \cap W_q^{\ES^{(k-1)}} = \ES$,

then $g(p,q) \not \in W_p^{\ES^{(k-1)}} \cup W_q^{\ES^{(k-1)}}$. \ec
\end{definition}
\begin{remark}	 \label{rem:expl_Insep_Sk}  {\rm
(a) If we enlarge both components of the disjoint pair to disjoint~$\SI k$  sets, the function $g$ is still a productive function.

\n (b) We say that  a disjoint pair   $A,B$ of $\SI k$ sets       is     \emph{$m$-complete for disjoint pairs}    if  for each disjoint pair  $U,V$   of $\SI k$ sets, there is a computable  map $\aaa$ such that  \bc $x \in U \lra \aaa(x) \in A$, and $x \in V \lra \aaa(x)  \in B$.  \ec  It is a well-known result going back to Smullyan~\cite{Smullyan:61}   that each e.i.\ pair of $\SI k$ sets is $m$-complete in this sense.  See \cite[II.4.15]{Soare:87} for a more recent reference. Note that the $m$-reduction  in \cite[II.4.15]{Soare:87} can be determined from   indices of the    given  $\SI k$ sets $U,V,A,B$, and   the productive function $g$ for $A,B$. Even If $U \cap V \neq \ES$ or $A \cap B \neq \ES$, as long as $g$ is total the construction still yields  a  (total) computable function.}
\end{remark}

	

We consider Boolean algebras in the language with partial order $\le$, meet $\wedge$, join $\vee$, and complementation $\, '$.
A $\SI k$ Boolean algebra  $\+ B$  is represented by a model $(\omega, \preceq, \vee, \wedge, ' )$ such that $ \preceq $ is a $\SI k$ preorder, $ \vee, \wedge$ are   computable binary functions, $\, '$ is a computable unary function, and the quotient structure $ (\omega, \preceq, \vee, \wedge, ' )/_{\approx}$ is isomorphic to $\+ B$. Here $\approx$ is the equivalence relation corresponding to $\preceq$, and we assume the functions are compatible with $\approx$.  We may also  assume that $0 \in \omega$ denotes the least element of the Boolean algebra, and $1 \in \omega$ denotes the greatest element. If necessary, we write $\preceq_\+ B$ etc.\ to indicate the Boolean algebra a relation on $\omega$ belongs to.

 The  usual effectiveness notions defined for sets or functions on the natural numbers can be transferred  to  $\+ B$. For instance, we say that an ideal $I$ of $\+ B$ is $\SI k$ if $I$ is $\SI k$  when viewed as a subset of $\omega$.

Let $\+ F$ be a computable   Boolean algebra  that is   freely generated by a computable sequence  $(p_n)\sN n$. For instance, we can take as $\+ F$ the finite unions of intervals $[x,y)$ in $[0,1)_\QQ$. We fix  an  effective encoding of $\+ F$ by natural numbers.  Then, equivalent to the definition above,  a  $\SI k$ Boolean algebra is given in the form $\+ F / \+ I$,  where $\+ I$ is a $\SI k$ ideal of $\+ F$.
We rely on this view  for  coding a $\SI k$ preorder into a $\SI k$ Boolean algebra.  We  slightly  extend concepts and results    of~\cite{Montagna.Sorbi:85} where $k=1$.

\begin{lemma} \label{lem:Represent_Sigma_k}    For any $\SI k$ preorder $\preceq$,  there is a $\SI k$ ideal $\+ I$ of~$\+ F$ such that
	 $ n \preceq k \leftrightarrow  p_n - p_k  \in \+ I $.
\end{lemma}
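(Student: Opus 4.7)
The plan is to let $\+ I$ be the ideal of $\+ F$ generated by $S = \{ p_n - p_m : n \preceq m \}$. With this choice, the forward implication $n \preceq m \Rightarrow p_n - p_m \in \+ I$ is immediate from the definition of $\+ I$. The remaining tasks are (a) to verify that $\+ I$ is $\SI k$, and (b) to establish the converse implication $p_n - p_m \in \+ I \Rightarrow n \preceq m$. The freeness of $\+ F$ on $(p_\ell)\sN \ell$ is the ingredient that will prevent the generated ideal from introducing any spurious relations.

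For (a), an element $a \in \+ F$ lies in the ideal generated by $S$ iff there exist finitely many pairs $(n_1, m_1),\ldots,(n_j, m_j)$ with $n_i \preceq m_i$ for each $i$ and
\[ a \le (p_{n_1} - p_{m_1}) \vee \cdots \vee (p_{n_j} - p_{m_j}) \text{ in } \+ F. \]
Since $\+ F$ is computable, the displayed inequality is decidable once a finite tuple of pairs is fixed, while the conjunction of the $\SI k$ conditions $n_i \preceq m_i$ remains $\SI k$. Existentially quantifying over a numerical code for the tuple then preserves $\SI k$, so $\+ I$ is $\SI k$.

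For (b), suppose we have a witness $p_n - p_m \le \bigvee_{i=1}^{j}(p_{n_i} - p_{m_i})$ with $n_i \preceq m_i$ for every $i$. By freeness of $\+ F$, any assignment of the generators $(p_\ell)\sN \ell$ to $\{0,1\}$ extends uniquely to a Boolean homomorphism $v \colon \+ F \to \{0,1\}$; take the $v$ determined by $v(p_\ell) = 1$ iff $n \preceq \ell$. Reflexivity of $\preceq$ gives $v(p_n) = 1$. Assume for contradiction that $n \not\preceq m$, so $v(p_m) = 0$ and hence $v(p_n - p_m) = 1$. Applying $v$ to the displayed inequality forces $v(p_{n_i} - p_{m_i}) = 1$ for some $i$, meaning $n \preceq n_i$ and $n \not\preceq m_i$. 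But $n \preceq n_i$ together with $n_i \preceq m_i$ yields $n \preceq m_i$ by transitivity, contradicting $n \not\preceq m_i$. Hence $n \preceq m$, as required.

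The main delicate point is the balancing act between (a) and (b): the generating set $S$ must be small enough that the ideal stays at level $\SI k$, yet rich enough that the ideal faithfully captures $\preceq$. The choice of $S$ above achieves both, with the freeness of $\+ F$ supplying exactly the right tool, namely $\{0,1\}$-valued valuations, so that the converse uses only reflexivity and transitivity of $\preceq$.
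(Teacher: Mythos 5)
Your proof is correct and takes essentially the same route as the paper: the same ideal generated by $\{p_n - p_m : n \preceq m\}$, with the converse obtained from a homomorphism that annihilates the generators while separating $p_n$ from $p_m$ --- your two-valued valuation $v$ is precisely the paper's homomorphism $p_i \mapsto \hat{i} = \{r : r \preceq i\}$ into the algebra $\mathcal{B}_\preceq$ composed with evaluation at the point $n$, and it uses reflexivity and transitivity in the same way. The only difference is that you also spell out the verification that the generated ideal is $\Sigma^0_k$, which the paper leaves implicit.
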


\begin{proof} Let $\+ I$ be the  ideal  of $\+ F$ generated by $\{ p_n - p_k \colon n \preceq k \}$.
	The implication ``$\rightarrow$''   follows from the  definition. For the implication ``$\leftarrow$'', let $\+ B_\preceq$ be the Boolean algebra generated by the subsets of $\omega$  of the form  $\hat i = \{ r \colon \, r \preceq i\}$. The map $p_i \mapsto \hat i$ extends to a Boolean algebra homomorphism $g \colon \, \+ F \to \+ B_\preceq$ that sends $\+ I $ to $0$. If $n \not \preceq k$ then $\hat n \not \subseteq \hat k$, and hence $p_n - p_k \not \in \+ I$.
\end{proof}


\begin{definition}\label{def:Insep_Sk_BA}
	We say that a $\SI k$ Boolean algebra $\+ B$ is  \emph{$\SI  k$ effectively inseparable} (e.i.)  if the sets $[0]_\approx$ and $[1]_\approx$ (that is, the names for $0 \in \+ B$ and for $1\in \+ B$, respectively) are effectively inseparable.
\end{definition}

We will frequently use the following criterion to show that a $\SI k$ Boolean algebra $\+ B$ is effectively inseparable.

\begin{fact} \label{fa:easy e.i.}   Suppose that  $U,V$ is a pair of  e.i.\ $\SI k$ sets. Let    $\+ B$  be a $\SI k$ Boolean algebra.   Suppose there is a computable function $g$ such that \bc $x \in U \to g(x) \approx 0$ and $x \in V \to g(x) \approx 1$. \ec
	Then $\+ B$ is effectively inseparable. \end{fact}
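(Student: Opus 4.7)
The plan is to transfer effective inseparability from the pair $U, V$ to the pair $[0]_\approx, [1]_\approx$ in $\+ B$, by pulling back separating pairs along the given computable function $g$. Let $h$ be a computable productive function witnessing the effective inseparability of $U, V$ as in Definition~\ref{def:Insep_Sk}. Using the $s$-$m$-$n$ theorem relativized to $\ES^{(k-1)}$, together with a fixed index for $g$, I would produce a computable function $r$ with the property that, for every index $p$,
$$W_{r(p)}^{\ES^{(k-1)}} = g^{-1}\!\bigl(W_p^{\ES^{(k-1)}}\bigr).$$
This is possible because the preimage of a $\SI k$ set under a total computable function is again $\SI k$, uniformly in an index.

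Now fix indices $p, q$ of a disjoint $\SI k$ separating pair for $[0]_\approx, [1]_\approx$: that is, $[0]_\approx \sub W_p^{\ES^{(k-1)}}$, $[1]_\approx \sub W_q^{\ES^{(k-1)}}$, and $W_p^{\ES^{(k-1)}} \cap W_q^{\ES^{(k-1)}} = \ES$. The hypothesis $x \in U \to g(x) \approx 0$ gives $g(U) \sub [0]_\approx \sub W_p^{\ES^{(k-1)}}$, hence $U \sub W_{r(p)}^{\ES^{(k-1)}}$; likewise $V \sub W_{r(q)}^{\ES^{(k-1)}}$. The two pullbacks are disjoint because $W_p^{\ES^{(k-1)}}$ and $W_q^{\ES^{(k-1)}}$ are. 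Thus $r(p), r(q)$ are indices of a disjoint $\SI k$ separating pair for $U, V$, and productiveness of $h$ yields
$$y := h(r(p), r(q)) \notin W_{r(p)}^{\ES^{(k-1)}} \cup W_{r(q)}^{\ES^{(k-1)}}.$$
Applying $g$ then gives $g(y) \notin W_p^{\ES^{(k-1)}} \cup W_q^{\ES^{(k-1)}}$, exactly what is needed.

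Hence the map $(p, q) \mapsto g(h(r(p), r(q)))$ is a computable productive function for the pair $[0]_\approx, [1]_\approx$, establishing that $\+ B$ is $\SI k$ effectively inseparable in the sense of Definition~\ref{def:Insep_Sk_BA}. There is no substantive obstacle in the argument; the only point requiring care is the uniform effectivity of the pullback operation on $\SI k$ indices, which is a routine application of the relativized $s$-$m$-$n$ theorem.
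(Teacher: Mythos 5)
Your proof is correct and follows essentially the same route as the paper: the paper sets $A=\{x: g(x)\approx 0\}$ and $B=\{x: g(x)\approx 1\}$, observes that these are disjoint $\Sigma^0_k$ supersets of $U,V$ and hence effectively inseparable by Remark~\ref{rem:expl_Insep_Sk}(a), and then transfers inseparability along $g$ to the pair $[0]_\approx,[1]_\approx$ --- precisely the pullback of separating sets and push-forward of the productive value that you carry out explicitly. The only difference is presentational: you apply the productive function of $U,V$ directly to the pulled-back separators (folding the enlargement step into the pullback), whereas the paper routes through the enlarged pair and leaves that final transfer along $g$ implicit.
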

\begin{proof} Let $A = \{x \colon \,  g(x) \approx 0\} $ and $B = \{x \colon \,  g(x) \approx 1 \}$. Then $A,B$ are $\SI k$ sets,  $A \cap B = \ES$, $U \sub A$, and $V \sub B$. By Remark~\ref{rem:expl_Insep_Sk} this implies that $A,B$ are effectively inseparable as $\SI k$ sets, whence $\+ B$ is  effectively inseparable as a $\SI k$ Boolean algebra.	
\end{proof}
We slightly extend a result   \cite[Prop.\ 3.1]{Montagna.Sorbi:85}. As mentioned already, that work  goes back to results  of  Pour-El  and Kripke \cite[Lemmas 1,2]{Kripke.PourEl:67} who only worked in  the setting of  axiomatizable theories. It is worth   including   a proof of the extension  because we only partially relativize the  setting of \cite{Montagna.Sorbi:85}: The preorder of the Boolean algebra is $\SI k$, while  all of  its algebraic operations, as well as the reduction function in the definition of completeness,  are still computable.

\begin{thm}\label{pro:Insep_are_complete} Suppose a Boolean algebra $\+ B$ is  $\SI k$-effectively inseparable.   Then $\preceq_\+ B$ is a $\SI k$-complete  preorder.
\end{thm}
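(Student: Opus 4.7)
The plan is to relativize the Pour-El--Kripke--Montagna--Sorbi argument to the $\SI k$ setting, exploiting that the productive function in Definition~\ref{def:Insep_Sk} remains computable even though $\preceq_\+ B$ is $\SI k$---the ``partial relativization'' highlighted just before the theorem.

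Let $\preceq$ be any $\SI k$ preorder on $\omega$. By Lemma~\ref{lem:Represent_Sigma_k}, fix a $\SI k$ ideal $\+ I \sub \+ F$ with $n \preceq k \lra p_n - p_k \in \+ I$. It suffices to construct a computable Boolean algebra homomorphism $h : \+ F \to \+ B$ with $x \in \+ I \lra h(x) \approx_\+ B 0$, for then $f(n) := h(p_n)$ reduces $\preceq$ to $\preceq_\+ B$: $n \preceq k \lra p_n - p_k \in \+ I \lra h(p_n - p_k) \approx_\+ B 0 \lra f(n) \preceq_\+ B f(k)$. If $\+ I = \+ F$ the preorder is trivial and a constant $f$ works, so assume $\+ I$ is proper; then $(\+ I, \+ I^*)$ with $\+ I^* := \{x \in \+ F : -x \in \+ I\}$ is a disjoint pair of $\SI k$ sets.

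To construct $h$, apply the Recursion Theorem in advance to obtain a computable index $e$ with $h = \varphi_e$. Then $A := \{ x \in \+ F : \varphi_e(x) \approx_\+ B 0\}$ and $B := \{ x \in \+ F : \varphi_e(x) \approx_\+ B 1\}$ are disjoint $\SI k$ sets with indices $e_A, e_B$ uniformly computable from $e$. Build $h$ recursively by specifying $h(p_n)$ on each generator and extending uniquely as a Boolean algebra homomorphism, using freeness of $\+ F$. The inclusion $\+ I \sub A$ is secured by arranging that $h(p_n) \preceq_\+ B h(p_k)$ whenever $n \preceq k$, since $\+ I$ is generated as an ideal by $\{ p_n - p_k : n \preceq k\}$ and $h$ is a homomorphism. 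The reverse inclusion $A \sub \+ I$ is where effective inseparability enters: were some $x \in \+ F \setminus \+ I$ to land in $A$, then $A$, together with an auxiliary $\SI k$ enumeration contained in $B$, would yield indices $p,q$ with $[0]_\approx \sub W_p^{\ES^{(k-1)}}$, $[1]_\approx \sub W_q^{\ES^{(k-1)}}$, $W_p^{\ES^{(k-1)}} \cap W_q^{\ES^{(k-1)}} = \ES$ and $x \in W_p^{\ES^{(k-1)}}$; the productive function $g$ would then furnish $g(p,q) \notin W_p^{\ES^{(k-1)}} \cup W_q^{\ES^{(k-1)}}$, which during stage $n$ is used to steer the choice of $h(p_n)$, and hence of $h(x)$, away from $[0]_\approx$.

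The main obstacle is the tension between the rigidity of the homomorphism $h$---determined globally by its values on the countably many generators $p_n$---and the need to block spurious $\approx_\+ B 0$-collapses of every element of $\+ F \setminus \+ I$. The Pour-El--Kripke technique handles this by interleaving the inductive definition of $h(p_n)$ with calls to the productive function $g$, the Recursion Theorem providing access to the eventual $\SI k$ indices $e_A, e_B$ already at stage $n$. The argument proceeds exactly as in \cite[Prop.~3.1]{Montagna.Sorbi:85} for $k=1$, with r.e.\ indices replaced throughout by $\SI k$ indices $W_p^{\ES^{(k-1)}}$; the productive function $g$, the Boolean operations of $\+ B$, and the resulting reduction $f$ all remain computable, which is precisely what the partial relativization provides.
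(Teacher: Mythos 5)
Your overall route coincides with the paper's: reduce a given $\SI k$ preorder via Lemma~\ref{lem:Represent_Sigma_k} to a $\SI k$ ideal $\+ I$ of the free algebra $\+ F$, then computably embed $\+ F/\+ I$ into $\+ B$ using effective inseparability. The gap is that the embedding construction --- the entire content of the theorem --- is never actually given, and the one concrete mechanism you describe would not work as stated. You say the inclusion $\+ I \sub A$ is ``secured by arranging that $h(p_n) \preceq_{\+ B} h(p_k)$ whenever $n \preceq k$''; but $\preceq$ is only $\SI k$, so at no finite stage can you know whether to enforce this, and achieving it is exactly the problem to be solved, not a device you may assume. Symmetrically, ``steering $h(p_n)$ away from $[0]_\approx$'' only protects the generators: for $A \sub \+ I$ you must prevent every Boolean combination outside $\+ I$ from collapsing, which forces you to control all $2^n$ minterms at stage $n$, and that requirement is two-sided --- the image minterm must go to $0$ exactly when the corresponding minterm on the preorder side is $0$, and must be pushed all the way up to the previous minterm $y_\tau$ exactly when $n$ lies above $p_\tau$. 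Your sketch never addresses the $\succeq$ half, and a single appeal to the productive function per stage does not visibly produce elements meeting both constraints at once.

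The paper supplies precisely this machinery: for each length-$n$ string $\tau$ it forms the pair $A_\tau=\{r : r \wedge y_\tau \approx_{\+ B} 0\}$, $B_\tau=\{r : r \succeq_{\+ B} y_\tau\}$, which by Remark~\ref{rem:expl_Insep_Sk}(a) is e.i.\ with the same productive function as $([0]_\approx,[1]_\approx)$, together with $U_\tau=\{m : m\wedge p_\tau \approx 0\}$, $V_\tau=\{m : m \succeq p_\tau\}$ on the preorder side; Smullyan's uniform $m$-completeness of e.i.\ pairs (Remark~\ref{rem:expl_Insep_Sk}(b), which is where the recursion-theorem work you invoke is packaged) gives a computable $\alpha$ reducing $(U_\tau,V_\tau)$ to $(A_\tau,B_\tau)$, and one sets $z_\tau = y_\tau \wedge \alpha(n)$ and $h(n)=\bigvee_{|\tau|=n} z_\tau$, maintaining the invariant $p_\tau \approx_{\+ C} 0 \lra y_\tau \approx_{\+ B} 0$ by induction (with a separate observation for the degenerate case $y_\tau \approx_{\+ B} 0$, where the pairs are no longer disjoint but totality of $\alpha$ still makes $z_\tau$ well defined). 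Deferring all of this to ``proceeds exactly as in Montagna--Sorbi with r.e.\ indices replaced by $\SI k$ ones'' does not discharge it: you correctly identify that the relativization is only partial (the preorder is $\SI k$ while the operations, the productive function, and the reduction stay computable), but the inductive step that exploits this asymmetry is exactly what your write-up omits.
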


\begin{proof}   We say that a  $\SI k$ Boolean algebra  is    \emph{$\SI k$-complete} if each $\SI k$ Boolean algebra $\+ C$  is computably embedded into it. We first show that   the given Boolean algebra $\+ B$ is   $\SI k$-complete.    For an element $z$ of a Boolean algebra define $z^{(0)}= z$ and $z^{(1)} = z'$.

   Given a  $\SI k$ Boolean algebra $\+ C$, we  construct the desired computable embedding $h$ of $\+ C$  into $\+ B$  recursively.
Recall that the domain of a presentation of $\+ C$  is $\omega$ by our definition. Suppose $y_i= h (i)$ has been defined for $i< n$.  For each $n$-bit string $\tau$ let
\bc $p_\tau = \bigwedge_{i< n}  i^{(\tau_i)} $  and $y_\tau =  \bigwedge_{i< n} y_i^{(\tau_i)}$.  \ec
In particular, if $n=0$, we let $p_\estring = 1_\+ C$ and $y_\estring = 1_\+ B$.

We define $h$ in such a way that   for each $n$ and each  string $\tau$ of length $n$, we have
\begin{equation} p_\tau \approx_\+ C 0 \lra y_\tau \approx_\+ B 0. \tag{$*$} \end{equation}
This is clearly the case for $n= 0$.  Inductively assume ($*$) holds for $n$.
To define $h(n)$ we use the following.

\n {\bf Claim.}  {\it From  a string $\tau$ of length $n$ we can effectively determine an element $z_\tau \preceq_\+ B y_\tau$  such that }
\bc $n \wedge p_\tau \approx_\+ C 0 \lra z_\tau \approx_\+ B 0 $,  and $n \succeq_\+ C p_\tau \lra z_\tau \approx_\+ B y_\tau $.  \ec
\n To see this, first suppose $y_\tau \not \approx_\+ B  0$.
Then the  pair of $\SI k$ sets
\bc $A  = \{r \colon \, r \wedge y_\tau \approx_\+ B 0\}$, $B = \{ r \colon \, r \succeq_\+ B y_\tau \}$  \ec
is    disjoint. By  Remark~\ref{rem:expl_Insep_Sk}(a)  this  pair  is   e.i.\ with the same productive function~$g $ as the one for the    e.i.\ pair  of sets $\{r \colon \, r   \approx_\+ B 0\}$, $ \{ r \colon \, r \approx_\+ B 1_\+ B \}$.

Since $y_\tau \not \approx_\+ B  0$, by ($*$) for $n$  we have $p_\tau  \not \approx_\+ C  0$. Hence the  pair of  $\SI k$ sets
\bc $U = \{k \colon \, k \wedge p_\tau \approx_\+ C 0\}$, $V = \{ k \colon \, k \succeq_\+ C p_\tau \}$  \ec is also disjoint. By  Remark~\ref{rem:expl_Insep_Sk}(b), we are uniformly given an $m$-reduction $\alpha$ from $U,V$ to $A,B$.   We let $z_\tau = y_\tau \wedge \aaa(n )$. Then the claim is satisfied in case $y_\tau \not \approx_\+ B  0$.

 If $y_\tau   \approx_\+ B  0$ then, by ($*$) for $n$,  we have   $p_\tau    \approx_\+ C  0$. None of the pairs of sets is disjoint now, but (based on the productive function $g$) we still have a computable index for a   function $\alpha$,  and hence  a definition of    $z_\tau$ such that $z_\tau  \approx_\+ B 0$, which vacuously satisfies the claim.

Now   let  $h(n) = \bigvee_{|\tau|=n} z_\tau$. It is clear that ($*$) holds for $n+1$. Then by induction,  the map $h$   is a computable embedding of $\+ C$ into $\+ B$, as required.

To conclude the  proof of the theorem, let $\preceq_P$ be  any $\SI k$ preorder.  	By Lemma~\ref{lem:Represent_Sigma_k}   there is a $\SI k$ ideal $\+ I$ of the free Boolean algebra $\+ F$ such that
		 $ n \preceq k \leftrightarrow  p_n - p_k  \in \+ I $. Since $\+ B$ is $\SI k$-complete, there is a computable embedding $g$ of $\+ F/ \+ I$ into $\+ B$. Thus, $ n \preceq_P k \leftrightarrow g(p_n) \preceq_\+ B g(p_k)$.
\end{proof}

 A forth-and-back version of the argument above shows that any two e.i.\ $\SI k$ Boolean algebras are effectively isomorphic. This fact for $k=1$ was  already noted in~\cite{Kripke.PourEl:67,Montagna.Sorbi:85}.  It  is interesting in view of the applications below: for a fixed~$k$, all the preorders considered are complete ``for the same reason".


\subsection{Derivability in first-order logic}

Let $T$ be a recursively axiomatized sufficiently strong theory in the language of arithmetic, such as Robinson arithmetic $Q$. Note  that such a theory  $T$ can be finitely axiomatizable. The following  was first observed in  \cite[Section 4]{Montagna.Sorbi:85}, building on    Pour-El  and Kripke \cite{Kripke.PourEl:67}.

\begin{theorem}\label{thm:PA}  For sentences $\phi, \psi$ in the language of $T$, let $\phi \preceq \psi$ if  $\vdash_T \phi \to \psi$. Then $\preceq $  is a $\SI 1 $-complete preorder.
\end{theorem}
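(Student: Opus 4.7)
The plan is to present $\preceq$ as the preorder of a $\SI 1$ effectively inseparable Boolean algebra and invoke Theorem~\ref{pro:Insep_are_complete}. Fix an effective G\"odel numbering of the sentences of $T$, extended arbitrarily to all of $\omega$; interpret $\wedge,\vee,{}'$ on codes as syntactic conjunction, disjunction and negation, which are computable operations. Since $T$ is r.e., the relation $\phi \preceq \psi$ defined by $\vdash_T \phi \to \psi$ is $\SI 1$, and the quotient by $\phi \approx \psi \LR \vdash_T \phi \lra \psi$ is the Lindenbaum algebra $\+ B_T$, a genuine Boolean algebra. Thus $(\omega,\preceq,\wedge,\vee,{}')$ presents $\+ B_T$ as a $\SI 1$ Boolean algebra in the sense used above.

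By Theorem~\ref{pro:Insep_are_complete} it then suffices to verify that $\+ B_T$ is effectively inseparable in the sense of Definition~\ref{def:Insep_Sk_BA}. In this presentation $[0]_\approx = \{\phi \colon \vdash_T \neg \phi\}$ and $[1]_\approx = \{\phi \colon \vdash_T \phi\}$, i.e., the sets of refutable and of provable sentences; these are disjoint by the consistency of $T$ (which is built into ``sufficiently strong'').

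To produce the productive function $g(p,q)$, I would invoke the classical G\"odel--Rosser construction: given indices $p,q$, use representability of $\SI 1$ predicates in $T$ together with the fixed-point lemma to produce, uniformly and computably in $p,q$, a sentence $\sigma_{p,q}$ whose code $n$ satisfies a Rosser-style self-referential condition, asserting roughly ``if $n \in W_p$ then $n$ is enumerated into $W_q$ no later''. The standard Rosser case analysis shows that whenever $W_p,W_q$ are disjoint r.e.\ sets separating the refutable sentences from the provable ones, $n \notin W_p \cup W_q$; setting $g(p,q) = n$ gives the required total computable productive function. Hence $\+ B_T$ is $\SI 1$-effectively inseparable, and Theorem~\ref{pro:Insep_are_complete} delivers $\SI 1$-completeness of $\preceq$.

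The main obstacle is the Rosser diagonalization itself, which rests on the arithmetization of syntax in $T$ and the fixed-point lemma. I would treat this as a black box from the metamathematics of arithmetic (essentially Smullyan's effective inseparability theorem, see \cite{Smullyan:61} and \cite{Montagna.Sorbi:85}); once it is supplied, the effectively inseparable Boolean algebra machinery developed in the preceding subsection does all the remaining work.
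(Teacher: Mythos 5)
Your proposal is correct and follows essentially the same route as the paper: both observe that $\preceq$ is $\SI 1$, identify the quotient as the Lindenbaum algebra whose classes $[0]_\approx$ and $[1]_\approx$ (refutable and provable sentences) are effectively inseparable by Smullyan's classical Rosser-style result, and then invoke Theorem~\ref{pro:Insep_are_complete}. The paper simply cites Smullyan \cite{Smullyan:61} (or \cite{Montagna.Sorbi:85}) for the effective inseparability rather than sketching the diagonalization, so no further comparison is needed.
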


\begin{proof}
	Clearly $\preceq$ is $\SI 1$. It was first observed by Smullyan~\cite{Smullyan:61} that  the Lindenbaum algebra of $T$ is $\SI 1$-effectively inseparable (or see \cite[Section 4]{Montagna.Sorbi:85}). Now Theorem~\ref{pro:Insep_are_complete} implies that $\preceq$ is complete.
\end{proof}

 In \cite{Ianovski:12} the  first author has given a direct  proof that logical implication for the first order language in  a full signature is $\SI 1 $-complete. His proof   relies on a   coding of Turing  machine computations.

\subsection{$\SI 2$   preorders in the resource-bounded setting} \label{ss:Preord} For relevant notation from  complexity theory  see Section~\ref{s:prelim}. In particular, there is an effective listing  of the class  $\DTIME(h)$, which we fix  below without further mention.
We use a technical tool:
\begin{lemma}\label{lem:pair_Sigma_2} Given a disjoint pair of $\SI 2$ sets $U, V$, we can effectively in $x$  determine a linear time computable language $L_x$ such that \bc $x\in U \to L_x \text{ is finite}$, and $x\in V \to L_x \text{ is cofinite}$. \ec
\end{lemma}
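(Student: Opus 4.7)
The plan is to first manufacture, from $\Sigma^0_2$-indices for $U$ and $V$, a single computable $\{0,1\}$-valued approximation $f(x,s)$ that stabilises at $0$ on $U$ and at $1$ on $V$, and then to define $L_x$ by running $f$ against a clock fed by the input length.

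For the approximation, write $U=\{x:\exists y\,\forall z\,R_U(x,y,z)\}$ and $V=\{x:\exists y\,\forall z\,R_V(x,y,z)\}$ with $R_U,R_V$ computable.  At stage $s$ I search for the least $y\le s$ that is still apparently a $U$-witness (i.e.\ $R_U(x,y,z)$ for all $z\le s$) or apparently a $V$-witness (i.e.\ $R_V(x,y,z)$ for all $z\le s$), and set $f(x,s)=0$ if such a least $y$ is of $U$-type, $f(x,s)=1$ if only of $V$-type, and $f(x,s)=0$ otherwise.  Because $U\cap V=\ES$, if $x\in U$ with least true $\Pi^0_1$-witness $y_0$ then each $y<y_0$ is refuted on \emph{both} sides in finitely many steps, so eventually the minimal apparent candidate is $y_0$ itself and $f(x,s)=0$; the case $x\in V$ is symmetric.

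For the language, effectively in $x$ I describe a multitape Turing machine $M_x$ that uses its input tape as a unary clock.  On input $w$ of length $n$, $M_x$ advances the input head by one cell per step and sequentially simulates $f(x,0),f(x,1),f(x,2),\ldots$ on its work tapes, keeping the current argument in unary on a separate tape, starting a fresh simulation the moment one finishes, and maintaining a single-bit register with the value of the most recently completed call.  When the input head runs off $w$, $M_x$ halts, accepting iff the register is $1$; each of the $n$ steps performs one step of the $f$-simulation together with $O(1)$ bookkeeping, so $L_x:=L(M_x)\in\DTIME(n)$, with an index for $M_x$ computed uniformly from $x$.  Since each $f(x,m)$ halts after finitely many steps, the largest $m$ whose simulation completes within $n$ ticks grows unboundedly with $n$, so the limiting behaviour of $f(x,\cdot)$ transfers: $x\in U$ forces the register to be $0$ for all sufficiently large $n$ and hence $L_x$ to be finite, while $x\in V$ forces it to be $1$ and hence $L_x$ to be cofinite.

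The one real obstacle is the approximation step: neither the canonical $\Sigma^0_2$ approximation of $U$ alone nor of $V$ alone gives a computable two-valued function with the correct eventual behaviour on \emph{both} sides simultaneously, so disjointness must be exploited, and the ``least apparent witness of either type'' device is what forces stabilisation on $U\cup V$.  Once that is in hand, the conversion to a linear-time language is just the standard input-as-clock trick on a multitape machine.
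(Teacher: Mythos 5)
Your proof is correct, and its overall architecture (build a computable stage-wise guess that converges to $0$ on $U$ and to $1$ on $V$, then transfer it to a linear-time language by the input-as-clock trick) is the same as the paper's; the difference lies in how the guess is obtained. The paper does not work from the raw $\Sigma^0_2$ normal form: it invokes the standard representation of $\SI 2$ sets (Soare, p.\ 66) giving uniformly r.e.\ sequences $(S_x)$, $(T_x)$ with $x\in U\leftrightarrow S_x$ finite and $x\in V\leftrightarrow T_x$ finite, and decides all strings of length $n$ according to which of $S_x$, $T_x$ is seen to have changed most recently within $n$ steps; disjointness enters exactly as in your argument, to guarantee that on $U\cup V$ one of the two sequences is finite and the other infinite. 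Your ``least apparent witness'' device re-derives the needed two-valued approximation from scratch, which makes the proof more self-contained but loses the uniform template the paper reuses verbatim in the $\SI 3$ analogue (Lemma 4.11, with initial segments $P_{x,n}$). Two small points to tidy: for $x\in V$ the case is not literally symmetric because your tie-break favours $0$, but this is harmless since the least surviving witness $y_1$ loses its apparent $U$-status after finitely many stages; and in the clocked machine you should charge the bookkeeping for incrementing the unary argument and restarting the simulation of $f(x,m+1)$ against the clock as well (the paper glosses over the same routine detail), which affects only how fast the completed argument $m(n)$ grows, not that it tends to infinity, so $L_x\in\DTIME(n)$ uniformly in $x$ as required.
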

\begin{proof} It follows from \cite[pg.\ 66]{Soare:87} that there is   a uniformly r.e.\  pair of  sequences  $  (S_x)\sN x, (T_x)\sN x$ such that   \bc $x\in U \to S_x \text{ is finite}$, and $x\in V \to T_x \text{ is finite}$. \ec To define $L_x$,  at stage $n$,  in linear time we determine whether $w \in L_x$ for each string $w$ of length $n$. Let $t\le n$ be largest  such that $t>0$ and     in $n$ steps one  can verify that (a) $S_{x,t} \neq S_{x,t-1}$ or  (b) $T_{x,t} \neq T_{x,t-1}$; if there is no such $t$, let $t=0$. If (a)    applies or $t=0$, declare that  $w$ is in  $L_x$. Otherwise, declare that  $w$ is not in $L_x$.
	
	Clearly $L_x$ is computable in linear time, uniformly in $x$. If $x \in U$, then $S_x$ is finite, so $T_x$ is infinite. Then   for almost all $n$  we are in case (b), so $L_x$ is finite.   If $x\in V$ then $T_x$ is finite, so $S_x$ is infinite, so for almost all $n$  we are in case (a) and $L_x$ is cofinite.
\end{proof}

We obtain a variation on  Theorem~\ref{thm:Quad_inclusion}.

\begin{theorem}\label{thm:polytime_inclusion}   The preorder of almost inclusion $\sub ^*$  among  quadratic time computable languages is $\SI 2$-complete.
\end{theorem}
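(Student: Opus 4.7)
The plan is to apply Theorem~\ref{pro:Insep_are_complete} to the $\SI 2$ Boolean algebra structure induced by $\subseteq^*$ on quadratic time computable languages. First I would verify that this is indeed a $\SI 2$ Boolean algebra: if $A,B$ are quadratic time languages, then membership in $A,B$ is decidable, so $A \subseteq^* B \lra \exists n \forall x \, (|x|\ge n \to (x\in A \to x \in B))$ is $\SI 2$; moreover, running two clocked quadratic time TMs in parallel lets one effectively compute indices for $A\cup B$, $A\cap B$ and $\overline A$ from indices for $A$ and $B$, and these Boolean operations are compatible with $=^*$. Take $0 \in \omega$ to be the index of $\emptyset$ (a quadratic time language) and $1$ the index of $\Sigma^*$, so that $[0]_\approx$ is the set of indices of finite languages and $[1]_\approx$ the set of indices of cofinite languages; both are clearly $\SI 2$ and disjoint.

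The core step is effective inseparability of $[0]_\approx$ and $[1]_\approx$. Fix any e.i.\ pair of $\SI 2$ sets $U,V$ (such pairs exist, for instance by relativizing a standard creative/simple construction to $\ES'$). Apply Lemma~\ref{lem:pair_Sigma_2} to produce, uniformly in $x$, a linear time computable language $L_x$ with $x \in U \Rightarrow L_x$ finite and $x \in V \Rightarrow L_x$ cofinite. Since linear time is quadratic time, from $x$ we can compute an index $g(x)$ for $L_x$ as a (clocked) quadratic time TM. Then $x \in U \Rightarrow g(x) \approx 0$ and $x \in V \Rightarrow g(x) \approx 1$ in our Boolean algebra, so Fact~\ref{fa:easy e.i.} yields that the Boolean algebra is $\SI 2$-effectively inseparable.

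Finally, Theorem~\ref{pro:Insep_are_complete} immediately implies that the preorder $\subseteq^*$ on (indices of) quadratic time computable languages is $\SI 2$-complete, completing the proof.

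The only real obstacle is the verification that quadratic time computable languages, viewed modulo $=^*$, form a $\SI 2$ Boolean algebra in the sense of Section~\ref{sec:preorders}, i.e.\ that the Boolean operations are genuinely computable on indices; this is routine given the machinery of clocked multitape machines from Section~\ref{s:prelim}. Once this is in place, the rest is a direct application of the preceding framework, with Lemma~\ref{lem:pair_Sigma_2} supplying exactly the transfer from an arbitrary e.i.\ $\SI 2$ pair into the algebra that Fact~\ref{fa:easy e.i.} requires.
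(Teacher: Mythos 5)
Your proof is correct, and it runs on exactly the machinery the paper uses---Lemma~\ref{lem:pair_Sigma_2}, Fact~\ref{fa:easy e.i.} and Theorem~\ref{pro:Insep_are_complete}---but you instantiate it with a simpler Boolean algebra. The paper fixes a set $A\in \DTIME(n^2)\setminus\DTIME(n)$ and works with the algebra $\{A\cap L \colon L \text{ linear time}\}/\!=^*$, represented on linear-time indices, and at the end transfers to the quadratic-time setting by computing an index for $A\cap G_i$ in the clocked quadratic-time listing; you instead take the algebra of all quadratic time languages modulo $=^*$, represented directly on that listing, whose $0$- and $1$-classes are the indices of finite and of cofinite languages. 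Since Lemma~\ref{lem:pair_Sigma_2} produces (uniformly) linear, hence quadratic, time languages $L_x$ that are finite or cofinite according to whether $x\in U$ or $x\in V$, Fact~\ref{fa:easy e.i.} applies verbatim and Theorem~\ref{pro:Insep_are_complete} yields completeness of $\sub^*$ on quadratic-time indices with no further reduction step. Your version is sound, granted the routine verifications you flag: the Boolean operations are computable on clocked indices and compatible with $=^*$, the relation $\sub^*$ is $\SI 2$ on these indices, and one can arrange indices $0$ and $1$ to name $\ES$ and $\Sigma^*$. What the paper's more elaborate choice buys is not needed for this theorem: it gives the slightly stronger statement that $\sub^*$ is complete already on the subclass $\{A\cap L\}$, and, more importantly, the same algebra $\+ B$ is reused in the proof of Theorem~\ref{thm:polytime_reducibility}, where the fixed set $A$ (there taken super sparse and outside $\PP$) is essential for embedding into the degrees below $\deg^p_r(A)$.
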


\begin{proof} Let $(G_i)\sN i$ be an effective listing of all the linear time computable languages. Fix  a set $A$ computable in quadratic time but not in linear time. We claim that  the $\SI 2$ Boolean algebra  \bc  $\+ B =  ( \{A\cap L \colon \,  L \text{ is   linear time}\}, \sub ^*)/=^*$ \ec  with the canonical representation given by $(G_i)\sN i $, is $\SI 2$ effectively inseparable.
	To see this, in Lemma \ref{lem:pair_Sigma_2} let $U,V$ be any pair of $\SI 2$ effectively inseparable sets. If $x \in U$ then $A \cap L_x =^* \ES$. If   $x \in V$ then $A \cap L_x =^* A$. Now we apply
Fact~\ref{fa:easy e.i.}.

The preorder of  $\+ B$ on indices for linear time computable languages is $\SI 2 $-complete by Theorem~\ref{pro:Insep_are_complete}. From $i$ we can compute an index for $A \cap G_i$ within our effective listing of the quadratic time sets. Hence we can effectively
reduce the  preorder on $\+ B$ to  almost inclusion $\sub ^*$  among  quadratic time computable languages.
\end{proof}

In the following, $\le^p_r$ will  denote a polynomial time reducibility in between polynomial time  many-one (m) and Turing (T) reducibility. Clearly this reducibility is $\SI 2$ on  $\DTIME(h)$.
\begin{thm} \label{thm:polytime_reducibility}
Suppose the function $h$ is  time constructible and dominates all polynomials. Then   $\le^p_r$ on sets in $\DTIME(h)$ is    $\SI 2$-complete  preorder.
\end{thm}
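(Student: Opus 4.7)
The plan is to build on Theorem~\ref{thm:polytime_inclusion} by exhibiting a $\le_c$-reduction of $\sub^*$ on $\DTIME(n^2)$ to $\le^p_r$ on $\DTIME(h)$. Concretely, I will construct a set $A\in\DTIME(h)$ with an \emph{independence property} and use the map $L \mapsto A\cap L$, where $L$ ranges over linear time languages. Since $h$ dominates all polynomials, $A\cap L\in\DTIME(h)$ automatically. The easy direction $L_1\sub^* L_2\Rightarrow A\cap L_1\le^p_m A\cap L_2$ is a finite-patch many-one reduction, hence an $r$-reduction. The nontrivial direction requires $A$ to satisfy: whenever $L_1\setminus L_2$ is infinite, no polynomial time $r$-reduction $\Phi_e$ reduces $A\cap L_1$ to $A\cap L_2$.

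The construction of $A$ is a diagonalization. I enumerate triples $(i,j,e)$ and allocate each a disjoint block of fresh witness strings of sufficiently large length. At the stage handling $(i,j,e)$, I search the block for a witness $w\in G_i\setminus G_j$; if one is found, I declare $\chi_A(w)$ and $\chi_A$ on $\Phi_e$'s (polynomially many) oracle queries so that $\Phi_e^{A\cap G_j}(w)$ disagrees with $[w\in A\cap G_i]$; if no witness is ever found in the block, the requirement is vacuously satisfied because $G_i\setminus G_j$ is then finite. Because $h$ is time constructible and dominates all polynomials, the work per stage fits within its block's time budget, and deciding $\chi_A(x)$ amounts to locating $x$'s block and replaying that stage, which is in $\DTIME(h)$.

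Once $A$ is in hand, $\le^p_r$ on the family $\{A\cap L : L\in\DTIME(n)\}$ coincides with $\sub^*$ on the $L$'s, so $\equiv^p_r$ is a Boolean-algebra congruence and we obtain a $\SI 2$ Boolean algebra isomorphic to the one in Theorem~\ref{thm:polytime_inclusion} but now with preorder $\le^p_r$ and embedded in $(\DTIME(h),\le^p_r)$ via the effective map $L\mapsto$ an index for $A\cap L$. Effective inseparability is verified exactly as in that theorem: applying Lemma~\ref{lem:pair_Sigma_2} to any e.i.\ pair of $\SI 2$ sets $U,V$ yields linear time $L_x$ which is finite for $x\in U$ (whence $A\cap L_x\equiv^p_r\emptyset$) and cofinite for $x\in V$ (whence $A\cap L_x\equiv^p_r A$ via finite-patch many-one reductions in both directions, using that $A\notin P$, itself a consequence of the independence property applied to $L_1=\Sigma^*$, $L_2=\emptyset$). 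Fact~\ref{fa:easy e.i.} supplies e.i., and Theorem~\ref{pro:Insep_are_complete} concludes $\SI 2$-completeness.

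The main obstacle is the diagonalization producing $A\in\DTIME(h)$ with the independence property. One must simultaneously ensure $A\in\DTIME(h)$, maintain independence across infinitely many triples $(i,j,e)$, and cope with the fact that ``$G_i\setminus G_j$ is infinite'' is $\PI 2$ and so not directly testable. The disjoint-block allocation resolves conflicts between stages, the optimistic search strategy handles the $\PI 2$ condition (if the difference turns out finite, the requirement is vacuous), and the assumption that $h$ dominates all polynomials provides the slack to run each $\Phi_e$'s polynomial time attempt within its allotted block.
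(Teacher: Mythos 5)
There is a genuine gap, and it sits exactly at the point your sketch treats as routine: the backward direction of your reduction, i.e.\ the claim that $\le^p_r$ on the family $\{A\cap L : L \text{ linear time}\}$ \emph{coincides} with $\subseteq^*$ on the $L$'s. No set $A$ of the kind your construction produces can have the stated ``independence property''. If $A\cap(L_1\setminus L_2)$ is finite (or merely in $\PP$), then $A\cap L_1\le^p_r A\cap L_2$ by a trivial finite-patch (or $\PP$-decidable-difference) reduction, even when $L_1\setminus L_2$ is infinite: on input $x$ decide in polynomial time whether $x$ lies in the exceptional part and otherwise pass the question to the oracle. Your $A$ consists of witnesses placed in designated blocks, so it is easy to find linear time $L_1,L_2$ with $L_1\setminus L_2$ infinite but almost disjoint from $A$; for these the reduction exists and $L_1\not\subseteq^* L_2$, so the map $L\mapsto A\cap L$ is not a reduction of $\subseteq^*$ to $\le^p_r$. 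To make your plan work you would need $A$ to meet every infinite linear time set in a set that is not even in $\PP$, a much stronger (and unaddressed) requirement. The paper sidesteps precisely this: it takes a \emph{super sparse} $A\in\DTIME(h)\setminus\PP$ (Ambos-Spies), uses the criterion $A\cap U\le^p_r A\cap V\iff A\cap(U-V)\in\PP$ for polynomial time $U,V$, and then, because the induced preorder is coarser than $\subseteq^*$, passes to the quotient of the Boolean algebra of Theorem~\ref{thm:polytime_inclusion} by the $\SI 2$ ideal of classes with $A\cap L\in\PP$ before invoking effective inseparability and Theorem~\ref{pro:Insep_are_complete}. Your argument has no analogue of this quotient step.

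Secondly, the diagonalization itself is not carried by ``disjoint blocks''. The oracle queries of $\Phi_e^{A\cap G_j}(w)$ have length polynomial in $|w|$ and land outside $w$'s block, in regions reserved for other requirements or not yet decided, so the commitments you make leak across blocks and must be respected later; and the ``replay the stage'' decision procedure must recursively recompute those commitments within time $O(h)$, which requires a careful slow-diagonalization bookkeeping (which requirements act at which lengths, how query answers are cached or recomputed). This is exactly the technical content packaged in the super sparseness lemma and in the cited criterion from \cite{Nies:habil}; as written, your construction neither guarantees the needed non-reducibility facts nor establishes membership of $A$ in $\DTIME(h)$.
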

\n For instance, we can let $h(n)=2^n$, or $h(n)= n^{\log\log n}$.
The proof relies on the following notion of  Ambos-Spies \cite{Ambos:86}.

\begin{definition}
Let 	 $f \colon \, \omega \to \omega$  be  a  strictly increasing,
	time constructible
	function. We say that a language
$A \sub \{0\}^*$ is \emph{super sparse}  via $f$ if
 $A \sub \{ 0^{f(k)}: k \in \omega \}$
and    ``$ 0^{f(k)} \in A$ ?''
can be determined in time
$O(f(k+1))$.
\end{definition}
Supersparse sets exist in the time classes we are interested in.

\begin{lemma}[\cite{Ambos:86}] \label{ssplemma} Suppose that $h: \omega
\to \omega$ is
an increasing time constructible function
with $\PP \subset \DTIME(h)$, so that $h(n) \ge n+1$ and $h$
eventually dominates all polynomials. Then there is a  {super sparse}
language  $A\in \DTIME(h)- \PP$.
\end{lemma}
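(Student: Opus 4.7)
The plan is a standard diagonalization argument, but carefully arranged so that the language we build is supported on a rapidly increasing, time-constructible set of input lengths and still decidable within the time bound $h$.

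First I fix a standard effective enumeration $(M_k)_{k\in\NN}$ of deterministic multitape Turing machines, where $M_k$ is clocked to halt within $n^k+k$ steps on inputs of length $n$; this gives an enumeration of $\PP$. I then define $f\colon\NN\to\NN$ recursively by $f(0)=1$, and $f(k+1)$ equal to the least $n>f(k)$ such that $h(n)\ge n^{k+1}+2n\cdot(k+1)+c\cdot g(k)$, where $g(k)$ is an upper bound on the time needed to recompute $f(0),\dots,f(k+1)$ from scratch on a tape and $c$ is a fixed constant accounting for simulation overhead. Such $n$ exists because $h$ eventually dominates every polynomial, and the condition ``$h(n)\ge\ldots$'' is decidable in time $O(h(n))$ since $h$ is time constructible; so $f$ itself is strictly increasing and time constructible.

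Next I define the language. Put $A\subseteq\{0\}^*$ by declaring $0^{f(k)}\in A$ iff $M_k$ rejects $0^{f(k)}$ (including the case that the clock on $M_k$ expires), and put no other strings into $A$. Then $A\subseteq\{0^{f(k)}:k\in\NN\}$ by construction, and the value $A(0^{f(k)})$ can be determined by computing $f(k+1)$ (certifying that we have the right index $k$) and then simulating $M_k$ on $0^{f(k)}$, which altogether runs in $O(f(k+1))$ steps by the way $f$ was chosen; thus $A$ is super sparse via $f$.

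To see $A\in\DTIME(h)$, on input $w$ of length $n$ first check in time $O(n)$ whether $w=0^n$; if not, reject. Otherwise compute the sequence $f(0),f(1),\dots$ until the first index $k$ with $f(k)\ge n$; since $f$ grows faster than any polynomial and $h$ is time constructible, the total cost of this enumeration is bounded by the slack built into the definition of $f$, hence is $O(h(n))$. If $n\ne f(k)$, reject; otherwise simulate $M_k$ on $0^n$ for $n^k+k$ steps and output the negation, again within $O(h(n))$ by the defining inequality on $h(f(k))$. Finally $A\notin\PP$, for if some $M_k$ decided $A$ in time $n^k+k$, then by construction $A(0^{f(k)})\ne M_k(0^{f(k)})$, a contradiction.

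The main obstacle, and the only place real care is required, is the simultaneous bookkeeping in the definition of $f$: the recursive step must (i) leave enough headroom in $h(f(k+1))$ to absorb both the simulation of $M_k$ and the on-the-fly recomputation of $f(0),\dots,f(k)$ during the decision procedure for $A$, while (ii) keeping $f$ itself time constructible. Both are arranged by padding $f(k+1)$ upward using the time-constructibility of $h$ and the hypothesis that $h$ dominates every polynomial; this is precisely the point at which the assumption $\PP\subsetneq\DTIME(h)$ is used.
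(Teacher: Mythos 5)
Your overall plan (diagonalize against a clocked enumeration of $\PP$ on a very sparse, rapidly growing set of designated lengths, using that $h$ eventually dominates all polynomials) is the same as the paper's, but your specific choice of $f$ creates a genuine gap. First, the definition is circular as stated: $g(k)$ is supposed to bound the time to recompute $f(0),\dots,f(k+1)$, yet $f(k+1)$ is defined by an inequality involving $g(k)$; pushing the threshold up moves $f(k+1)$ further out, which increases the recomputation cost, so no $g(k)$ fixed in advance can do what you ask of it. Second, and more seriously, even ignoring that: because $f(k+1)$ is the \emph{least} $n$ satisfying your inequality, both the decision procedure for $A$ and the time constructibility of $f$ (which the definition of super sparseness requires) force you to certify minimality, i.e.\ to check the threshold condition at every candidate $m$ in the gap below $n$. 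Each failed candidate costs on the order of $h(m)$ steps (you must essentially evaluate $h(m)$, by time constructibility), so recognizing that $n=f(k)$ costs on the order of $\sum_{m\le n} h(m)$, which is \emph{not} $O(h(n))$ in general: for $h(n)=n^{\log\log n}$ (one of the intended applications) this sum is about $n\cdot h(n)$. Hence your claims ``the total cost of this enumeration is $O(h(n))$'' and ``$f$ is time constructible'' do not follow, and with them both $A\in\DTIME(h)$ and super sparseness via $f$ are unproved.

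The paper's sketch avoids exactly this trap by taking $f(k)=h^{(k)}(0)$: the designated lengths are then recognizable within $O(h(n))$ steps simply by iterating $h$ (the costs telescope, since $h$ is time constructible and the iterates grow at least geometrically once $h$ dominates $n^2$), and $f$ is time constructible for the same reason. The price is that $h(f(k))$ need not dominate $f(k)^k$, so one cannot insist on defeating $M_k$ exactly at length $f(k)$; instead one uses delayed diagonalization, attacking at each designated length the least machine index whose clocked simulation fits within the available $O(h)$ budget, and noting that every index is eventually handled because $h$ dominates every polynomial. Your construction tried to build that headroom into $f$ itself, which is what makes the graph of $f$ too expensive to recognize; if you want to keep your presentation, replace your $f$ by the iterated-$h$ function and add the delayed-diagonalization bookkeeping.
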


\begin{proof}[Sketch of Proof]  Let $f(n) = h^{(n)}(0)$. Since $h$ eventually
dominates all polynomials, we can construct $A \sub \{0^{f(k)}: k \in
\omega\}$ such that $A \in \DTIME(h)$, but still diagonalize against all
polynomial time machines.
\end{proof}

We will keep this set $A$ fixed  in what follows. Given a reducibility $\le^p_r$, we    write $\mathbf a = \deg^p_r(A)$ for  the degree of $A$. We write $[\mathbf 0, \mathbf a]$ for the initial segment of the degrees below  $\mathbf a$, viewed as a partial order.

\begin{proof}[Proof of Theorem~\ref{thm:polytime_reducibility}]

Recall the e.i. $\SI 2$ Boolean algebra $\+ B$ from the proof of Theorem~\ref{thm:polytime_inclusion}. It is represented via an effective listing $(G_i)$ of linear time computable languages. We will embed a $\SI 2$ quotient of $\+ B$ into   $[\mathbf 0, \mathbf a]$.

Since $A$ is super sparse,  by  \cite[Lemma 6.2.9]{Nies:habil} for polynomial time sets $U, V$ we have \bc  $ A \cap U \le^p_r A \cap V  \leftrightarrow A \cap (U- V) \in \PP$. \ec
So we may well-define a map $\Phi \colon \+ B \to [\mathbf 0, \mathbf a]$ as follows. For a linear time set $L= G_i$, let \bc $\Phi(A \cap L/=^* ) = \deg^p_r( A \cap L)$.   \ec

Let $\+ I$ be the $\SI 2$ ideal of $\+ B$ consisting of all the equivalence classes   $A \cap L/=^* $ such that $ A \cap L \in  \PP$. Then $\+ C= \+ B / \+ I $ is in a natural way a $\SI 2$ Boolean algebra, and $\+ C$ is  effectively embedded into $[\mathbf 0, \mathbf a]$. Since $\+ B$ is e.i.\  and the map $\Phi$ is effective, it follows that  $\+ C$ is e.i. Hence by Theorem~\ref{pro:Insep_are_complete}, the $\SI 2$ preorder $\{\la e, i \ra \colon A \cap G_e \le^p_r A \cap G_i\} $ is complete. Via a computable    transformation  of indices,  this preorder is effectively reducible to $\le^p_r$ on $\DTIME(h)$.
	\end{proof}

\subsection{Almost inclusion of r.e.\ sets}

\begin{theorem}
	\label{thm:starequal}  The preordering  $\{\la e, i \ra \colon W_e \sub^* W_i\}$  of almost inclusion  among  r.e.\   sets is $\SI 3$-complete.
\end{theorem}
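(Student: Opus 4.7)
Following the template of Theorem~\ref{thm:polytime_inclusion}, I plan to prove $\SI 3$-completeness by producing a $\SI 3$-effectively inseparable Boolean algebra $\+ B$ whose preorder $\preceq_\+ B$ computably embeds into $\sub^*$ on r.e.\ sets, and then invoking Theorem~\ref{pro:Insep_are_complete}. That $\sub^*$ is $\SI 3$ is direct, since $W_e \sub^* W_i$ iff $\exists n \forall x > n\, (x \in W_e \to x \in W_i)$, whose matrix is $\PI 2$.

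For $\+ B$ I would take the Boolean algebra of $=^*$-computable r.e.\ sets (those differing finitely from a computable one), presenting its elements as pairs $(a,b) \in \omega^2$ of r.e.\ indices with the intended semantics that $(W_a, W_b)$ is an almost-partition of $\omega$. The Boolean operations lift computably: $(a,b) \wedge (c,d)$ uses $s$-$m$-$n$ to produce indices for $(W_a \cap W_c, W_b \cup W_d)$, join is dual, and complementation swaps coordinates. The preorder $(a,b) \preceq_\+ B (c,d) \iff W_a \sub^* W_c$ is $\SI 3$. A coding convention via a computable bijection $\omega \leftrightarrow \omega^2$, with pairs failing the validity condition identified coherently with a canonical element, ensures every natural codes some element and that the quotient by $\approx$ really is $\+ B$.

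The crux is verifying that $\+ B$ is $\SI 3$-e.i.\ via Fact~\ref{fa:easy e.i.}. I would use the pair $(\mathrm{Fin}, \mathrm{Cof}) = (\{x : W_x \text{ finite}\}, \{x : W_x \text{ cofinite}\})$, which is $\SI 3$-effectively inseparable by a standard recursion-theoretic diagonalization: given $\SI 3$ disjoint extensions $P \supseteq \mathrm{Fin}$ and $Q \supseteq \mathrm{Cof}$, one invokes the Recursion Theorem together with $\ES''$-approximations to $P, Q$ to build an r.e.\ set $W_e$ which is neither finite nor cofinite and whose index $e$ avoids $P \cup Q$. The computable $g$ required by Fact~\ref{fa:easy e.i.} has the form $g(x) = (\alpha(x), \beta(x))$, where $W_{\alpha(x)}$ and $W_{\beta(x)}$ are built in tandem so that the pair is valid whenever $x \in \mathrm{Fin} \cup \mathrm{Cof}$, with $W_{\alpha(x)}$ finite and $W_{\beta(x)}$ cofinite when $W_x$ is finite, and vice versa when $W_x$ is cofinite. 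The reduction of $\preceq_\+ B$ to $\sub^*$ is then the projection $(a,b) \mapsto a$, and Theorem~\ref{pro:Insep_are_complete} delivers the completeness.

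The hardest step is the simultaneous construction of $\alpha$ and $\beta$: the asymmetry between ``finite'' being $\SI 2$ and ``cofinite'' being $\SI 3$ for r.e.\ sets means that while $\alpha(x)$ can essentially be taken to be $x$ itself, the enumeration of $W_{\beta(x)}$ must be driven by a priority/approximation argument that reads current evidence about $W_x$ and dynamically produces an r.e.\ set of the opposite finiteness type, coordinating the two enumerations so the pair is valid on inputs lying in $\mathrm{Fin} \cup \mathrm{Cof}$. This, together with the verification that the coding of invalid pairs does not disturb the Boolean algebra quotient, is where all the real work of the proof lies.
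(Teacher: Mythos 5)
The decisive gap is exactly the step you defer as ``where all the real work lies'': the function $\beta$ you need cannot exist. Your requirements (validity of the pair $(\alpha(x),\beta(x))$ together with $g(x)\approx 0$ on $\mathrm{Fin}$ and $g(x)\approx 1$ on $\mathrm{Cof}$) force a computable $\beta$ with $W_x$ finite $\Rightarrow W_{\beta(x)}$ cofinite and $W_x$ cofinite $\Rightarrow W_{\beta(x)}$ finite. Then $T=\{x: W_{\beta(x)}\text{ is finite}\}$ would be a $\SI 2$ set containing $\mathrm{Cof}$ and disjoint from $\mathrm{Fin}$, and no such set exists: write $x\in T \iff \exists n\,\forall s\, R(x,n,s)$ with $R$ computable, and by the Recursion Theorem build $W_e$ which waits on witness $n$ while $\forall s\,R(e,n,s)$ looks true and enumerates $[0,s]$ into $W_e$ whenever the current witness is seen to fail. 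If some witness never fails, $W_e$ is finite yet $e\in T$, contradicting $T\cap\mathrm{Fin}=\ES$; if every witness fails, $W_e=\omega$ is cofinite yet $e\notin T$, contradicting $\mathrm{Cof}\sub T$. So the construction you plan (in effect a disjoint-pair reduction of $(\mathrm{Fin},\mathrm{Cof})$ to $(\mathrm{Cof},\mathrm{Fin})$) is impossible, not merely hard. There is also a problem one step earlier: your pair coding is not a legitimate $\SI 3$ presentation in the sense required by Theorem~\ref{pro:Insep_are_complete}, whose hypotheses need domain $\omega$, computable operations compatible with $\approx$, and a $\SI 3$ preorder. Validity of a pair ($W_a\cap W_b$ finite and $W_a\cup W_b$ cofinite) is $\SI 3$, hence invalidity is $\PI 3$: you cannot computably recognize invalid pairs, ``identifying them coherently with a canonical element'' destroys the $\SI 3$ bound on $\preceq$ and $\approx$, and leaving them in place makes complementation (coordinate swap) incompatible with $\approx$, since $W_a=^*W_c$ does not imply $W_b=^*W_d$.

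For comparison, the paper sidesteps both obstacles at once. It fixes a noncomputable r.e.\ set $A$ with a major subset $D\subset_m A$, proves a lemma converting any disjoint pair of $\SI 3$ sets $U,V$ into uniformly r.e.\ splittings $A=E_x\sqcup F_x$ with $E_x$ computable when $x\in U$ and $F_x$ computable when $x\in V$, and works in the Boolean algebra generated by the classes $(E_x\cup D)^*$ inside $[D,A]/=^*$. Complementation is effective there because the complement of $E_x$ within $A$ is the r.e.\ set $F_x$ (this is what your coordinate swap was trying to imitate, but it only works for splittings, not for arbitrary r.e.\ sets), and the major-subset property converts ``$E_x$ computable'' into $\approx 0$ and ``$F_x$ computable'' into $\approx 1$, so Fact~\ref{fa:easy e.i.} applies with an arbitrary e.i.\ pair of $\SI 3$ sets instead of $(\mathrm{Fin},\mathrm{Cof})$. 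If you want to salvage your outline, that relocation --- splittings of a fixed $A$ over a major subset rather than complemented r.e.\ sets modulo finite differences --- is the missing idea.
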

\begin{proof}
	All sets in this proof will be r.e. Fix a non-recursive r.e.\ set $A$.  By $  X \sqcup Y =A$ we denote that r.e. sets $X,Y$ are a splitting of $A$. That is, $X \cap Y = \ES$ and $X \cup Y = A$.
	We also write $X \sqsubseteq A$ to  denote that $X$ is part of a splitting of $A$.
	

	The following  technical tool parallels Lemma~\ref{lem:pair_Sigma_2}.
	\begin{lemma}\label{lem:pair_Sigma_3} Given a disjoint pair of $\SI 3$ sets $U, V$, we can effectively in $x$  determine an r.e.\  splitting  $A = E_x \sqcup F_x$ such that \bc $x\in U \to E_x \text{ is computable}$, and $x\in V \to F_x \text{ is computable}$. \ec
	\end{lemma}
	
	\begin{proof}[Proof of the lemma]  By \cite[pg.\ 66]{Soare:87} we may choose   a uniformly r.e.\ double sequence  $(P_{x,n})\sN{x,n} $ of initial segments of $\omega$
		 such that
		\bc $x\in U \lra \ex n \, [P_{x,2n} = \omega]$ and $x\in V \lra  \ex n \, [P_{x,2n+1} = \omega]$.  \ec
	We fix $x$. Let $g(k,s)$ be the greatest $t\le s$ such that $t=0$, or  $P_{x,k,t-1} \neq P_{x,k,t}$.   We  enumerate sets $E_x, F_x$ with    $E_x \sqcup  F_x= A$ as follows.
		At stage  $s> 0$, if  $a \in A_s - A_{s-1}$, see if there is a  $k \le s$ be least such that $a < g(k,s)$. If so, choose $k$ least. If $k$ is even, enumerate $a$ into $F_x$; otherwise, or if there is no such $k$, enumerate $a $ into~$E_x$.
		
		First suppose that $x \in U$. Then   the  least $k$ such that  $P_{x,k} = \omega$ is even. We have $\lim_s g(k,s) =  \infty$. If $k>0$ then  also   $r= \max_{i<k} \lim_s g(i, s) < \infty$. For any $v\ge r$,  we have $v \in E_x \lra v \in E_{x,s}$, where $s$ is least such that $g(k,s) > v$. Therefore $E_x$ is computable.
		
		If $x \in V$, then   the  least $k$ such that  $P_{x,k} = \omega$ is odd. A similar argument shows that $F_x$ is computable.
	\end{proof}
		Since $A$ is non-recursive, there is a   major subset $D \subset_m A$ (see \cite[Section X.4]{Soare:87}). Thus, if $R \sub A$ is computable then $R \sub ^* D$.
	We now define an  e.i.\   $\SI 3$ Boolean algebra  $\+ B $    which is a subalgebra  of the complemented elements in $[D,A]/=^*$. Suppose    we are given a   pair of e.i.\  $\SI 3$ sets $U,V$. Let $(E_x)\sN x$, $(F_x)\sN x$ be the u.r.e.\ sequences obtained through the foregoing lemma.
	We let $\+ B$ be the Boolean algebra generated by all the elements   of the form $(E_x \cup D)^*$ for $x \in \omega$. Since uniformly in~$x$ we can recursively  enumerate both $E_x $ and its complement $F_x = A - E_x$, it is clear that $\+B $ is a $\SI 3$ Boolean algebra. Furthermore, by Lemma~\ref{lem:pair_Sigma_3} and   Fact~\ref{fa:easy e.i.}, $\+ B$ is effectively inseparable. So  by Theorem~\ref{pro:Insep_are_complete}  the preorder of  $\+ B$, namely, $\{\la x, y \ra \colon \, E_x \sub^* E_y \cup D\}$,     is $\SI 3 $-complete.  Since the sequence $(E_x)\sN x$ is uniformly r.e., this establishes the theorem.
\end{proof}
Note that we have in fact shown that almost inclusion of sets of the form $(W_e\cup D) \cap A$ is $\SI 3$-complete.  It is worth noting  that  by  a result of Maass and Stob~\cite{Maass.Stob:83}, the   lattice $[D,A]/=^*$  is   unique up to effective isomorphism whenever $D $ is a major subset of~$A$ .
   Thus, only the complemented elements of the  Maass-Stob lattice are  needed to establish  the  completeness of $\sub ^*$ as a $\SI 3$ preorder; this lattice    is  in itself merely  a ``tiny'' part of the lattice of r.e.\ sets under almost inclusion.

\begin{cor}
	\label{cor:=* Sigma 3} The equivalence relation $\{\la e, i \ra \colon W_e =^* W_i\}$ is $\SI 3$-complete. \end{cor}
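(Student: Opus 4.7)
The plan is to derive this corollary directly from Theorem~\ref{thm:starequal} together with the general passage from preorders to equivalence relations supplied by Fact~\ref{fact:preorder}. Observe first that the equivalence relation $=^*$ is precisely the symmetric fragment of the preorder $\sub^*$: we have $W_e =^* W_i$ iff $W_e \sub^* W_i$ and $W_i \sub^* W_e$. So the corollary has exactly the shape to which Fact~\ref{fact:preorder} applies, with $n=3$, $P$ the almost-inclusion preorder on r.e.\ indices, and $E$ the almost-equality equivalence relation.

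Next I would quickly verify the $\SI 3$ upper bound. Since $W_e \sub^* W_i$ is the statement $\exists n \, \forall x \ge n \, [x\in W_e \to x \in W_i]$, it is $\SI 3$, and the conjunction of two $\SI 3$ statements is again $\SI 3$; hence $\{\la e,i\ra \colon W_e =^* W_i\}$ is $\SI 3$.

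For the completeness, let $F$ be an arbitrary $\SI 3$ equivalence relation. By Theorem~\ref{thm:starequal} the preorder $\sub^*$ on r.e.\ indices is $\SI 3$-complete, so viewing $F$ as a $\SI 3$ preorder yields a computable function $f$ with $\la x,y\ra \in F \lra W_{f(x)} \sub^* W_{f(y)}$. Symmetry of $F$ then gives $\la x,y\ra \in F \lra W_{f(x)} =^* W_{f(y)}$, which is exactly the reduction witnessing $F \leq_c {=^*}$. This is precisely the content of the proof of Fact~\ref{fact:preorder}, so there is no separate obstacle to overcome; the corollary is an immediate application.
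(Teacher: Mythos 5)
Your proposal is correct and follows exactly the paper's own route: it deduces the corollary from Theorem~\ref{thm:starequal} via Fact~\ref{fact:preorder}, since $=^*$ is the symmetric fragment of the $\SI 3$-complete preorder $\sub^*$ on r.e.\ indices. The extra verification of the $\SI 3$ upper bound and the unwinding of Fact~\ref{fact:preorder} are harmless elaborations of what the paper states in one line.
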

\begin{proof}
This follows immediately from Fact \ref{fact:preorder}.
\end{proof}

The relation $=^*$ on r.e.\ indices is the same relation as that called $\Ece0$
in \cite[Thm.\ 3.4]{Coskey.Hamkins.Miller:12}.
%
%
In the spirit of that article, we consider this and several other
equivalence relations carried over from the Borel theory.
The relations $E_0$, $E_1$, $E_2$, and $E_3$
are all standard Borel equivalence relations, whose definitions
 can be found in~\cite{Gao:09} (and may also be gleaned from the definitions given shortly). If $A\subseteq\omega$,
we write $(A)_n=\set{x}{\la n,x\ra\in A}$ for the
$n$-th column of $A$.
The analogues of these for r.e.\ indices are relations on the natural numbers,
derived by letting each natural number $e$ represent the set $W_e$.
Thus, for an equivalence relation $E$ on reals, we let $i~E^{ce}~j$
iff $W_i~E~W_j$, yielding the following specific relations.

\begin{align*}
& i~\Ece0~j \qquad\iff\qquad |W_i\triangle W_j|<\infty \qquad(\iff~~~W_i=^* W_j)\\
& i~\Ece1~j \qquad\iff\qquad \forall^\infty n~\left[(W_i)_n= (W_j)_n\right]\\
& i~\Ece2~j \qquad\iff\qquad \sum_{i\in W_i\triangle W_j}\frac1{i} <\infty\\
& i~\Ece3~j \qquad\iff\qquad \forall n~\left[~|(W_i)_n\triangle (W_j)_n|<\infty\right].
\end{align*}
Now it is not hard to see that $\Ece3$ is $\Pi^0_4$-complete as a set (since $=^*$ is $\Sigma^0_3$-complete as a set), and therefore not computably
reducible to any of the others, since the others are all $\Sigma^0_3$.
In the Borel theory, $E_0 <_B E_1$, $E_0 <_B E_2$, and $E_0 <_B E_3$,
with no other reductions holding among these relations.  The proofs
of the Borel reducibilities can be adapted to give proofs that
$\Ece0\leq_c\Ece1$, $\Ece0\leq_c\Ece2$, and $\Ece0\leq_c\Ece3$.
Therefore, Corollary \ref{cor:=* Sigma 3} gives a further result, providing the relation left open in \cite[Fig.\ 3]{Coskey.Hamkins.Miller:12}

\begin{cor}
$\Ece0$, $\Ece1$ and $\Ece2$ are    all  $\SI 3$-complete  equivalence relations,
and hence are bireducible with each other under $\leq_c$.
\end{cor}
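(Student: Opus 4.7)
The plan is to combine Corollary~\ref{cor:=* Sigma 3}, which already gives the $\SI 3$-completeness of $\Ece 0$, with (a) $\SI 3$ upper bounds for $\Ece 1$ and $\Ece 2$ and (b) computable reductions $\Ece 0 \leq_c \Ece 1$ and $\Ece 0 \leq_c \Ece 2$. Once both ingredients are in place, $\Ece 1$ and $\Ece 2$ are each $\SI 3$-hard by transitivity and $\SI 3$ from above, so all three relations are $\SI 3$-complete, and pairwise bireducibility is then automatic.

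For the upper bounds, the condition $i\,\Ece 1\, j$ parses as $\exists N\, \forall n \geq N\,[(W_i)_n = (W_j)_n]$, whose matrix is $\PI 2$ (equality of r.e.\ sets is $\PI 2$), so the whole formula is $\SI 3$. For $\Ece 2$, convergence of $\sum_{k \in W_i \triangle W_j} 1/k$ can be expressed as $\exists M\, \forall n\, \forall F \subseteq [0,n)\,[F \subseteq W_i \triangle W_j \to \sum_{k \in F} 1/k \leq M]$; since membership in $W_i \triangle W_j$ is $d$-r.e., the inner implication is $\PI 2$ and the entire formula is $\SI 3$.

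For the hardness direction I would adapt the classical Borel reductions. To realise $\Ece 0 \leq_c \Ece 1$, let $f$ be a computable function such that the $n$-th column of $W_{f(i)}$ is the tail $\{x \in W_i : x \geq n\}$; these tails stabilise from some $n$ on iff $W_i =^* W_j$, so $f(i)\,\Ece 1\, f(j) \lra W_i =^* W_j$. For $\Ece 0 \leq_c \Ece 2$, fix the pairwise disjoint blocks $B_n = [2^n, 2^{n+1})$ with $\sum_{m \in B_n} 1/m \geq \tfrac{1}{2}$, and set $W_{g(i)} = \bigcup_{n \in W_i} B_n$, which is visibly r.e.\ uniformly in $i$. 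Disjointness of the blocks gives $W_{g(i)} \triangle W_{g(j)} = \bigcup_{n \in W_i \triangle W_j} B_n$, and the reciprocal series over this union converges iff only finitely many blocks appear, i.e.\ iff $W_i =^* W_j$.

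The one place requiring a little care is the $\SI 3$ upper bound for $\Ece 2$: one has to write down the convergence condition while remembering that $W_i \triangle W_j$ is only $d$-r.e.\ rather than r.e., so that partial sums must be bounded by quantifying over admissible finite subsets rather than over an r.e.\ enumeration. The reductions themselves are routine translations of the Borel arguments to the r.e.-index setting, and the completeness and bireducibility statements follow at once.
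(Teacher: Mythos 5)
Your proposal is correct and follows essentially the same route as the paper: it combines the $\SI 3$-completeness of $\Ece0$ (Corollary~\ref{cor:=* Sigma 3}) with $\SI 3$ upper bounds for $\Ece1$ and $\Ece2$ and with computable adaptations of the classical Borel reductions $E_0\leq_B E_1$ and $E_0\leq_B E_2$. The only difference is that you spell out the quantifier counts and the tail-column and block reductions explicitly, details the paper leaves to the reader, and these check out.
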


In contrast, Corollary \ref{cor:noPincomplete} showed
that the relation $\Ece3$ cannot be complete under $\leq_c$
at any level of the arithmetical hierarchy, as it is $\Pi^0_4$
but not $\Sigma^0_4$ by virtue of its $\Pi^0_4$-completeness as a set of pairs.

\subsection{Weak truth-table reducibility on r.e.\ sets}

It is well-known that weak truth-table reducibility $\lwtt$ on r.e.\ sets is a $\SI 3$ preorder with computable supremum operation. It   determines the    distributive  upper semilattice   of r.e.\ weak truth-table degrees; see e.g.\ \cite{Odifreddi:81}.
\begin{theorem}
	\label{thm:m} The preorder $\{\la e, i \ra \colon W_e \lwtt  W_i\}$ is $\SI 3$-complete.
\end{theorem}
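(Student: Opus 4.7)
My plan is to follow the template of Theorems~\ref{thm:polytime_reducibility} and~\ref{thm:starequal}: embed an effectively inseparable $\SI 3$ Boolean algebra computably into the preorder $\lwtt$ on r.e.\ sets, then invoke Theorem~\ref{pro:Insep_are_complete}.

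First, fix an r.e.\ wtt-complete set $A$ (for instance $A=\Halt$) together with a major subset $D \subset_m A$, chosen so that $\deg_{wtt}(D) <_{wtt} \deg_{wtt}(A)$. Given an e.i.\ pair $U, V$ of $\SI 3$ sets, invoke Lemma~\ref{lem:pair_Sigma_3} exactly as in the proof of Theorem~\ref{thm:starequal} to obtain a uniformly r.e.\ sequence of splittings $A = E_x \sqcup F_x$ with $x \in U \Rightarrow E_x$ recursive and $x \in V \Rightarrow F_x$ recursive. Then for $x \in U$ one gets $E_x \cup D \equiv_{wtt} D$, while for $x \in V$ one has $A \lwtt E_x$ (since $F_x$ is recursive and $A = E_x \sqcup F_x$), whence $E_x \cup D \equiv_{wtt} A$. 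Consequently the computable map $x \mapsto $ r.e.\ index of $E_x \cup D$ sends $U$ into $\{e : W_e \equiv_{wtt} D\}$ and $V$ into $\{e : W_e \equiv_{wtt} A\}$, two disjoint $\SI 3$ sets.

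Next I would mimic the quotient construction of Theorem~\ref{thm:polytime_reducibility}. The sets $E_x \cup D$ generate the e.i.\ $\SI 3$ Boolean algebra $\+B \subseteq [D,A]/=^*$ already produced in Theorem~\ref{thm:starequal}. Let $\+I = \{ (E_x \cup D)/=^* \in \+B : E_x \cup D \equiv_{wtt} D \}$; since $\equiv_{wtt}$ on r.e.\ sets is $\SI 3$, this is a $\SI 3$ ideal of $\+B$, and the quotient $\+C := \+B / \+I$ carries a natural structure of a $\SI 3$ Boolean algebra. The map $\Phi\bigl((E_x \cup D)/\+I\bigr) := \deg_{wtt}(E_x \cup D)$ gives an effective embedding of $\+C$ into the r.e.\ wtt-degrees in the interval $[\bd, \ba]$. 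By the effective inseparability of $\+B$ (from Theorem~\ref{thm:starequal}) together with the sortings of $U$ and $V$ above, Fact~\ref{fa:easy e.i.} yields that $\+C$ is itself a $\SI 3$-effectively inseparable Boolean algebra. By Theorem~\ref{pro:Insep_are_complete} the preorder of $\+C$ is $\SI 3$-complete, and since indices for $E_x \cup D$ can be computed uniformly in $x$, this preorder reduces effectively to $\lwtt$ on r.e.\ indices.

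The main obstacle is the wtt-analog of the Ambos-Spies characterisation used in Theorem~\ref{thm:polytime_reducibility}: one must verify that $\Phi$ actually descends to an order-preserving map on the quotient, equivalently, that $E_x \cup D \lwtt E_y \cup D$ holds exactly when $E_x \cap F_y$ is wtt-trivial modulo $D$. One direction is immediate from the splitting structure, but for the other direction a wtt-reduction between two such sets must not be able to encode information beyond what the ideal $\+I$ already records. I would resolve this by choosing $A$ and $D$ with an additional wtt-thinness property in $A - D$ (a recursion-theoretic analogue of super sparsity), playing the role that Nies's Lemma~6.2.9 played in Theorem~\ref{thm:polytime_reducibility}. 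This is the principal technical step to be spelled out in the full proof.
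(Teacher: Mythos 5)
Your skeleton (Lemma~\ref{lem:pair_Sigma_3} to sort $U,V$, Fact~\ref{fa:easy e.i.}, then Theorem~\ref{pro:Insep_are_complete}) is the same as the paper's, and the sorting itself is fine: with $D$ a major subset, $x\in U$ gives $E_x\cup D=^*D$ and $x\in V$ gives $E_x\cup D=^*A$, hence the stated wtt-equivalences. But the proof has a genuine gap exactly where you flag ``the principal technical step to be spelled out'': you never establish that the map into the wtt-degrees turns your family into an \emph{effective} $\SI 3$ Boolean algebra whose preorder pulls back to $\lwtt$ on the r.e.\ indices. Concretely, (a) it is not clear that your $\+ I$ is even an ideal of $\+ B$ -- downward closure would require that $\sub^*$ among sets of the form $E_x\cup D$ implies $\lwtt$, which is not automatic; (b) the claimed equivalence ``$E_x\cup D\lwtt E_y\cup D$ iff the corresponding Boolean-algebra inequality holds in $\+ B/\+ I$'' is the whole content of the theorem and is left open; and (c) your suggested choice $A=\Halt$ undermines it: creative sets are mitotic, so $A$ has splittings with both halves wtt-complete, and there is no reason the wtt-degrees of the particular splits $E_x$ produced by Lemma~\ref{lem:pair_Sigma_3} respect the set-theoretic Boolean structure. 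A vaguely specified ``wtt-thinness'' of $A-D$ is not known to supply this, so as written the argument does not go through.

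The paper resolves this differently and more simply: it drops $D$ and the quotient altogether and chooses $A$ to be an incomputable \emph{antimitotic} r.e.\ set (Ambos-Spies), meaning every r.e.\ splitting of $A$ is a minimal pair. For splits of $A$, union, intersection and complementation within $A$ then induce join, meet and complementation of wtt-degrees in $[\mathbf 0,\mathbf a]$, so the degrees $\deg_{wtt}(E_x)$ generate a $\SI 3$ Boolean algebra $\+ C$ with computable operations on indices; Lemma~\ref{lem:pair_Sigma_3} puts $U$ into $[\mathbf 0]$ and $V$ into $[\mathbf a]$, so $\+ C$ is e.i.\ by Fact~\ref{fa:easy e.i.}, and Theorem~\ref{pro:Insep_are_complete} applies directly to $\{\la x,y\ra\colon E_x\lwtt E_y\}$. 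If you want to salvage your version, the missing lemma you need is precisely such a structural splitting property of $A$ (antimitoticity, or a wtt-analogue of \cite[Lemma 6.2.9]{Nies:habil} proved for a specially constructed $A$ and $D$); without proving it, the reduction from the Boolean-algebra preorder to $\lwtt$ is unjustified.
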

\begin{proof}
Ambos-Spies~\cite{Ambos-Spies:85} called an r.e.\ set $A$ \emph{antimitotic} if for any r.e.\  splitting $A = X \sqcup Y$, the   sets $X$ and $Y$ induce  a minimal pair in the Turing degrees. He   built an incomputable antimitotic set $A$, which we will fix in the following.  Let $\mathbf a = \deg_{wtt}(A)$.

Note that the degrees $\{\deg_{wtt}(X) \colon X \sqsubseteq A \} $ form a Boolean algebra of complemented elements in $[\mathbf 0, \mathbf a]$ with the usual degree ordering. We establish an e.i.\  $\SI 3 $   subalgebra $\+ C$ of this Boolean algebra. We are given a   pair of e.i.\  $\SI 3$ sets $U,V$. Let $(E_x), (F_x)$ be uniformly r.e.\ sequences as in Lemma~\ref{lem:pair_Sigma_3}. Thus $E_x \sqsubseteq A$ for each $x$.  Let $\+ C$ be the Boolean algebra generated by the $\deg_{wtt}(E_x)$,  $x \in \omega$. Since $A$ is antimitotic, intersection, union and complementation for splits of $A$ correspond to the operations of meet, join, and complementation in $[\mathbf 0, \mathbf a]$. Since the former operations are effective, $\+ C$ is indeed a $\SI 3$ Boolean algebra.

If  $x\in U$ then $E_x$ is computable, so $\deg(E_x) = \mathbf 0$.  If  $x\in V$ then $F_x$ is computable, so $\deg(E_x) = \mathbf a$. This implies that $\+ C$ is effectively inseparable.  So the preordering of  $\+ C$, namely, $\{\la x, y \ra \colon \, E_x \lwtt  E_y\}$,    is $\SI 3 $-complete by Theorem~\ref{pro:Insep_are_complete}.  Since the sequence $(E_x)\sN x$ is uniformly r.e., this establishes the theorem.
\end{proof}







\section{Turing reducibility on r.e.\ sets is a $\SI 4$-complete preorder}\label{sec:Sigmacomplete}

\newcommand{\Eperm}{\ensuremath{E^{ce}_{perm}}}
\newcommand{\Eset}{\ensuremath{E^{ce}_{set}}}
\newcommand{\comment}[1]{}
\newcommand{\leqset}{\leq_{set}}
\newcommand{\geqset}{\geq_{set}}

\newcommand{\Zce}{\ensuremath{Z^{ce}_{0}}}
\newcommand{\rs}[1]{\upharpoonright{#1}}
\newcommand{\con}{\ensuremath{*}}

\newcommand{\ais}{_{i,s}^{\alpha}}
\newcommand{\bis}{_{i,s}^{\beta}}
\newcommand{\aisp}{_{i,s+1}^{\alpha}}
\newcommand{\bisp}{_{i,s+1}^{\beta}}
\newcommand{\ajs}{_{j,s}^{\alpha}}
\newcommand{\bjs}{_{j,s}^{\beta}}
\newcommand{\ajsp}{_{j,s+1}^{\alpha}}
\newcommand{\bjsp}{_{j,s+1}^{\beta}}

\label{subsec:Turingequivalence}

\begin{thm}
\label{thm:sigma4}
The preorder $\{\la i,j \ra\mid W_i\leq_T W_j\}$ is   $\Sigma^0_4$-complete.  Hence the  equivalence  relation $\{\la i,j \ra\mid W_i\equiv_T W_j\}$ is also $\SI 4$-complete.
\end{thm}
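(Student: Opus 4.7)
The plan is to follow the template of Theorem~\ref{thm:m}: embed an effectively inseparable $\SI 4$ Boolean algebra into the Turing degrees of r.e.\ sets and invoke Theorem~\ref{pro:Insep_are_complete}. That $\{\la i,j\ra\colon W_i\leq_T W_j\}$ is itself $\SI 4$ is a routine unfolding of $\exists e\,\forall x\,[\Phi_e^{W_j}(x)=W_i(x)]$, noting that $[\Phi_e^{W_j}(x)=W_i(x)]$ is $\SI 2$ when $W_i,W_j$ are r.e.; and $\SI 4$-completeness of the preorder will by Fact~\ref{fact:preorder} immediately give $\SI 4$-completeness of the equivalence relation $\equiv_T$. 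So it suffices to prove $\SI 4$-hardness of $\leq_T$.

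For the ambient algebraic structure, following the proof of Theorem~\ref{thm:m}, I fix a Turing antimitotic r.e.\ set $A$ in the sense of Ambos-Spies~\cite{Ambos-Spies:85}: every r.e.\ splitting $A=X\sqcup Y$ gives a minimal pair in the Turing degrees. Writing $\mathbf a=\deg_T(A)$, the degrees $\{\deg_T(X)\colon X\sqsubseteq A\}$ form a Boolean algebra of complemented elements in $[\mathbf 0,\mathbf a]$: complement corresponds to passing to the other half of a splitting, join to union of $X$-sides (note $X\oplus Y\equiv_T A$), and meet to intersection. Antimitoticity is what makes these set-theoretic operations correspond to the Boolean operations in the degrees, and all of them are effective on r.e.\ indices.

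The central new ingredient is the following $\SI 4$ analogue of Lemma~\ref{lem:pair_Sigma_3}: \emph{given a disjoint pair of $\SI 4$ sets $U,V$, one can effectively in $x$ enumerate an r.e.\ splitting $A=E_x\sqcup F_x$ such that $x\in U$ implies $E_x$ is computable and $x\in V$ implies $F_x$ is computable.} To prove it I first represent any $\SI 4$ set in the form $x\in S \iff \exists n\,\forall m\,[P_{x,n,m}\neq \omega]$ for a uniformly r.e.\ triple-sequence of initial segments of $\omega$: ``$P\neq\omega$'' is $\SI 2$, and a standard transformation expresses any $\SI 2$ predicate as the finiteness of a u.r.e.\ set and hence as ``$P\neq\omega$'' for a suitable r.e.\ initial segment. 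Interleaving the representations of $U$ and $V$ by parity of $n$ as in Lemma~\ref{lem:pair_Sigma_3}, I then run a two-layer approximation. Let $g(k,m,s)$ be the largest $t\le s$ with $t=0$ or $P_{x,k,m,t}\neq P_{x,k,m,t-1}$, generalizing the inner length-function of Lemma~\ref{lem:pair_Sigma_3}. Upon a new element $a\in A_s\setminus A_{s-1}$, the outer layer takes the least $k\le s$ currently ``plausible'', in the sense that $a<g(k,m,s)$ for every $m\le s$, and enumerates $a$ into $E_x$ or $F_x$ according to the parity of this $k$. When $x\in U$ with true least witness $n_0$, disjointness of $U,V$ prevents any smaller odd $k$ from permanently winning the outer quantifier; the outer guess eventually stabilizes at $2n_0$, and the inner-layer analysis of Lemma~\ref{lem:pair_Sigma_3} applied inside that stable outcome places only finitely many $a$ on the wrong side, so $E_x$ is computable. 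The case $x\in V$ is symmetric.

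Granting the pair lemma, I apply it to a chosen e.i.\ pair of $\SI 4$ sets $U,V$ and let $\+ C$ be the Boolean subalgebra of $[\mathbf 0,\mathbf a]$ generated by $\{\deg_T(E_x)\colon x\in\omega\}$; this is a $\SI 4$ Boolean algebra with effective operations. By the pair lemma and Fact~\ref{fa:easy e.i.}, $\+ C$ is effectively inseparable: $x\in U$ forces $\deg_T(E_x)=\mathbf 0$, while $x\in V$ makes $F_x$ computable and so $E_x\equiv_T E_x\oplus F_x\equiv_T A$, giving $\deg_T(E_x)=\mathbf a$. Theorem~\ref{pro:Insep_are_complete} then yields $\SI 4$-completeness of the preorder of $\+ C$, which via $x\mapsto$ (an r.e.\ index of $E_x$) reduces to $\leq_T$ on r.e.\ sets. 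The main obstacle is the pair lemma itself: Lemma~\ref{lem:pair_Sigma_3} uses only a single layer of outcome guessing, whereas the $\SI 4$ case forces nesting of a $\forall m$-approximation inside the search for the correct $\exists n$, and one must coordinate the two layers so that only finitely many elements of $A$ are misclassified once the true witness exists.
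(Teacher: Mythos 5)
Your reduction of the theorem to $\SI 4$-hardness, the $\SI 4$ upper bound, and the appeal to Fact~\ref{fact:preorder} are all fine; the gap is that your central ``pair lemma'' is false for exactly the pairs you need it for, and no two-layer approximation can repair it. Note that ``$W_e$ is computable'' is a $\SI 3$ predicate, so the sets $C_0=\{x\colon E_x \text{ is computable}\}$ and $C_1=\{x\colon F_x \text{ is computable}\}$ are uniformly $\SI 3$, and they are disjoint because $A=E_x\sqcup F_x$ is noncomputable. Your lemma would give $U\subseteq C_0$ and $V\subseteq C_1$, so the $\SI 3$ (hence $\emptyset'''$-computable) set $C_0$ would separate $U$ from $V$. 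But an effectively inseparable pair of $\SI 4$ sets admits no $\emptyset'''$-computable separator: if $U\subseteq W_p^{\emptyset'''}$ and $V\subseteq W_q^{\emptyset'''}$ with $W_q^{\emptyset'''}$ the complement of $W_p^{\emptyset'''}$, the productive function yields $g(p,q)\notin W_p^{\emptyset'''}\cup W_q^{\emptyset'''}=\omega$, a contradiction. So the lemma fails for the e.i.\ pair to which you apply it, and the same obstruction kills the rest of the plan: in your algebra $\+ C$ the class $[0]_\approx$ consists of indices of computable r.e.\ sets and is therefore $\SI 3$, while by Remark~\ref{rem:expl_Insep_Sk}(b) each half of an e.i.\ pair of $\SI 4$ sets is $m$-complete $\SI 4$; hence $[0]_\approx,[1]_\approx$ cannot be e.i.\ in the sense of Definition~\ref{def:Insep_Sk_BA}, and Fact~\ref{fa:easy e.i.} and Theorem~\ref{pro:Insep_are_complete} never get off the ground. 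The splitting/antimitotic framework of Theorem~\ref{thm:m}, with bottom $\mathbf 0$ and top $\deg_T(A)$, intrinsically caps out at the $\SI 3$ level.

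This is precisely why the paper abandons the Boolean-algebra method here and gives a direct $\emptyset''''$-level priority construction on a tree: given a $\SI 4$ preorder $R$, it fixes (following Nies) uniformly r.e.\ sets $X^{i,j}_{e,p}$ whose outcomes encode $R$, and builds the r.e.\ sets $V_i$ from scratch so that $iRj$ iff $V_i\leq_T V_j$, using coding markers $\delta_\alpha$ for Turing reductions $\Delta_\alpha$, conflicting diagonalization strategies against $\Phi_e^{V_j}=V_i$, capricious destruction of markers, and a notion of $\alpha$-believable computation; a distinctive feature is that some nodes on the true path are injured infinitely often. If you wanted to rescue a Boolean-algebra route you would need an e.i.\ $\SI 4$ algebra of r.e.\ degrees whose zero class is genuinely $\SI 4$ (for instance an interval $[\mathbf b,\mathbf a]$ with $\mathbf b>\mathbf 0$ rather than $[\mathbf 0,\mathbf a]$), and constructing such an object effectively is not easier than the direct argument the paper gives.
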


\begin{proof}
We fix a $\Sigma^0_4$ preorder $R$. We give a    construction with the overall aim  to build a uniform sequence of r.e. sets $(V_i)\sN i$ such that $i R j$ iff $V_i\leq_T V_j$. Following Nies~\cite{Nies:97*2} we fix a uniformly r.e. sequence $X^{i,j}_{e,p}$ of initial segments of $\omega$, such that
\begin{eqnarray*}
i R j &\rightarrow&\text{For almost all $e$ and $p$, } |X^{i,j}_{e,p}|<\infty,\\
\neg i R j&\rightarrow&\forall e\exists p  \, X^{i,j}_{e,p}=\omega.
\end{eqnarray*}
Here $|X|$ denotes the cardinality of the set $X$. The facts ``$X^{i,j}_{e,p}=\omega$'' and ``$X^{i,j}_{e,p}<\infty$'' are (uniformly) $\Pi^0_2$ and $\Sigma^0_2$ respectively, so each such statement can be measured at a single node on a priority tree measuring infinitary or finitary behaviour. That is, $\neg i R j$ would be equivalent to the fact that ``$\forall e\exists p $ such that the node measuring $|X^{i,j}_{e,p}|$ has true infinitary outcome''. Similarly, if $i R j$ then ``for almost all $e,p$, the node measuring $|X^{i,j}_{e,p}|$ has true finitary outcome''. Hence the ordered pairs in $R$ will determine the true path of the construction, i.e. which nodes are visited infinitely often and which are not. We will arrange the strategies on the construction tree to align our actions with the true path.

In this proof we use $Y\rs{k}$ to mean the first $k+1$ bits of $Y$. We say ``$Y$ changes below $k$" to mean a change in $Y\rs{k}$.

\subsection{Requirements and a high-level description}

The requirements to be met are $S_{i,j}$ for $i,j\in\omega$:
\[S_{i,j} ~: ~i R j \leftrightarrow V_i\leq_T V_j. \]
The way in which each $S_{i,j}$ is satisfied can only be answered by a $0^{''''}$ oracle, and so it is difficult to split $S_{i,j}$ explicitly into subrequirements in a meaningful way. We will instead describe the local action of each node on the priority tree and then describe how the priority tree ensures that each requirement $S_{i,j}$ is met globally.

The construction tree is a labeled binary tree. Each node of length $\langle i,j,e,p\rangle$ is assigned the set $X^{i,j}_{e,p}$ and has two outcomes labelled $\infty,0$. As usual $\infty$ is to the left of $0$. If $\alpha$ is assigned the set $X^{i,j}_{e,p}$ we write \bc $i_\alpha,j_\alpha,e_\alpha,p_\alpha,X_\alpha$ for $i,j,e,p,X^{i,j}_{e,p}$, respectively.  \ec Outcome $\infty$ stands for $X_\alpha=\omega$, while outcome $0$ stands for $X_\alpha$ finite.

We now describe the local action of each node $\alpha$. During the construction $\alpha$ has a dual role. During the non-expansionary stages (i.e. $\alpha\con 0$ stages) $\alpha$ builds a Turing reduction aiming to make $V_i\leq_T V_j$. Loosely speaking, this strategy monitors for changes in $V_i$ and responds with a change in $V_j$ each time it sees an element entering $V_i$. The other (conflicting) strategy of $\alpha$ acts at each expansionary stage ($\alpha\con\infty$ stage). At each expansionary stage $\alpha$ checks to see if it can make $V_i\neq\Phi_{e_\alpha}^{V_j}$ by enumerating some number in $V_i$ and restraining $V_j$ (henceforth we will  write $\Phi_\alpha$ instead of $\Phi_{e_\alpha}$ for convenience). These two strategies are clearly conflicting, and we always initialize the Turing reduction whenever an expansionary stage occurs for $\alpha$.

We now describe how $S_{i,j}$ can be met globally. Suppose $\neg i Rj$. Then for each $e$ there is some node $\alpha$ along the true path with $i_\alpha=i,j_\alpha=j,e_\alpha=e$ and having true outcome $\infty$. This $\alpha$ will ensure that $\Phi_e^{V_j}\neq V_i$, and so overall we get $V_i\not\leq_T V_j$. In this case there could be infinitely many nodes $\beta$ along the true path also working for $i,j$, but which have true outcome $0$. The local strategy for $\beta$ will attempt to demonstrate $V_i\leq_T V_j$, which cannot possibly be achieved. Indeed each such node $\beta$ along the true path will be injured infinitely often by another node $\alpha$ further along the true path aiming to diagonalize $\Phi_e^{V_j}\neq V_i$. This is the key difference between this construction and a typical $\emptyset'''$ tree argument. In the latter case we usually only allow a finite amount of injury to each requirement along the true path, whereas in our case it is necessary that some node gets injured infinitely often along the true path.

Now suppose that $i Rj$. Then almost every node along the true path working for the pair $i,j$ must have true outcome $0$. In particular there is a shortest node $\alpha$ along the true path with true outcome $0$ (we will later call all possible candidates for $\alpha$ a \emph{top node}). In this case there could be finitely many top nodes $\alpha'\prec\alpha$ with true outcome $0$. However the Turing reducibilities built by these $\alpha'$ will all be destroyed (by nodes between $\alpha'$ and $\alpha$ which are attempting to diagonalize), and $V_i\leq_T V_j$ will be demonstrated by the final top node $\alpha$ along the true path. It is important to note that each top node $\alpha'$ cannot possibly know anything about the nodes on the true path which are longer than $\alpha'$, and so the situation described above cannot be avoided.

We conclude this section with some technical definitions. We say that $\alpha$ is a top node if there is no $\beta\prec\alpha$ such that $i_\alpha=i_\beta$ and $j_\alpha=j_\beta$, or if $\alpha(|\beta|)=\infty$ for the maximal such $\beta$. Given a node $\alpha$ we define the top of $\alpha$ to be the maximal top node $\tau\preceq\alpha$ such that
$i_\alpha=i_\tau$ and $j_\alpha=j_\tau$. We say that $\alpha$ is a child of $\tau$ if $\alpha$ has top $\tau$.

We let $Z^\infty(\alpha)=\{\beta\prec\alpha \mid \beta\con\infty\preceq\alpha$ and $\beta$ is not a top node$\}$, which are all the nodes extended by $\alpha$ running the $\infty$ strategy. Similarly we let $Z^0(\alpha)=\{\beta\prec\alpha \mid \beta\con 0\preceq\alpha$ and $\beta$ is a top node$\}$.

\subsection{Ensuring $V_i\leq_T V_j$} Each top node $\alpha$ will build a Turing reduction $\Delta_\alpha$ to ensure that $V_{i_\alpha}\leq_T V_{j_\alpha}$ in the case that $\alpha\con 0$ is along the true path. The Turing reduction is built indirectly by specifying the coding markers $\{\delta_\alpha(x)[s]\}_{x,s\in\omega}$. Each $\delta_\alpha(x)[s]$ is not in $V_{j_\alpha}[s]$ and obeys the following standard marker rules: For all $x$ and $s$, we ensure
\begin{itemize}
\item[(i)] $\delta_\alpha(x)[s]<\delta_{\alpha}(x+1)[s]$.
\item[(ii)] If $\delta_\alpha(x)[s]\neq \delta_\alpha(x)[s+1]$ or if $\delta_\alpha(x)[s]\downarrow$ and $\delta_\alpha(x)[s+1]\uparrow$ then $V_{j_\alpha,s-1}\rs{(\delta_\alpha(x)[s])}\neq V_{j_\alpha,s}\rs{(\delta_\alpha(x)[s])}$.
\item[(iii)] If $x$ is enumerated in $V_{i_\alpha}$ at stage $s$, and if $\delta_\alpha(x)[s]$ is defined then $V_{j_\alpha}$ must change below $\delta_\alpha(x)[s]$ at the same stage or later.
\item[(iv)] $\lim_s \delta_\alpha(x)[s]$ exists (if $\alpha\con 0$ is along the true path).
\end{itemize}
It is easy to see that if these rules are ensured, and if $\alpha*0$ is visited infinitely often, then $V_{i_\alpha}\leq_T V_{j_\alpha}$: To compute if $x$ is in $V_{i_\alpha}$ we find recursively in $V_{j_\alpha}$ a stage $s$ where $V_{j_\alpha}[s]$ is correct up to $\delta_\alpha(x)[s]$. This stage exists because of (iv). At all stages $t>s$ we have $\delta_\alpha(x)[s]=\delta_\alpha(x)[t]$ because of (ii). By (iii) we must have $x\in V_{i_\alpha}$ iff $x\in V_{i_\alpha}[s]$.

\subsection{Informal description of the strategy} We now describe the strategy and the method of overcoming the key difficulties. Each top node $\alpha$ will have a dual role. At $\alpha\con 0$ stages it must extend the Turing reducibility $\Delta_\alpha^{V_j}$ by picking a fresh value for $\delta_\alpha(x)$. It must also ensure the correctness of $\Delta_\alpha^{V_j}(x)$ by enumerating $\delta_\alpha(x)$ if $x$ had been enumerated into $V_i$ recently. At each $\alpha\con\infty$ stage $\alpha$ will attempt to diagonalize by enumerating (if possible) some number $l$ into $V_i$ and preserving $V_j$. Clearly these two strategies are conflicting and we give the diagonalization strategy higher priority; Hence at each $\alpha\con\infty$ stage we reset $\Delta_\alpha$ and all $\delta_\alpha$ markers. If
$\alpha\con 0$ is along the true path then we ensure $V_i=\Delta_\alpha^{V_j}$, and if $\alpha\con\infty$ is along the true path then we ensure $\Phi_\alpha^{V_j}\neq V_i$. In the latter case the reduction $\Delta_\alpha$ fails. Analyzing the overall outcome of the construction we note that if $\neg iRj$ then for every $e$ we ensure $\Phi_e^{V_j}\neq V_i$ at some node along the true path, and if $i R j$ then there is a final top node $\tau$ along the true path where $V_i=\Delta_\tau^{V_j}$ is successfully maintained. In this case there can be finitely many top nodes $\alpha$ where $\alpha\con\infty\preceq \tau$ for which $\Delta_\alpha^{V_j}$ is initialized infinitely often.

We now describe the main issue which will arise in implementing the above strategy. Suppose $\alpha\prec\beta$ are both working for the same $i,j$, and $\alpha$ has true outcome $0$, while $\beta$ has true outcome $\infty$. We may as well assume that $\alpha$ is the top of $\beta$. Now $\alpha$ and $\beta$ will have conflicting actions. At each $\beta\con\infty$ stage, we may want $\beta$ to diagonalize by putting some number $l$ in $V_i$. If $\delta_\alpha(l)$ has already been defined then $\alpha$ will later want to correct $\Delta_\alpha$ by putting $\delta_\alpha(l)$ into $V_j$, which can destroy the diagonalization previously obtained by $\beta$. We overcome this by delaying $\beta$ from diagonalizing. In the meantime, at each $\beta\con\infty$ stage we lift $\delta_\alpha(k)$ for some fixed number $k$ by enumerating $\delta_\alpha(k)$ into $V_j$ and later picking a fresh value for $\delta_\alpha$. Since there are infinitely many $\beta\con\infty$ stages, $\delta_\alpha(k)$ goes to infinity. Hence $\beta$ destroys $\Delta_\alpha$ but will now be able to meet $\Phi_\beta^{V_j}\neq V_i$ (via diagonalization or an undefined computation). $\Delta_\alpha^{V_j}$ is now destroyed. However this is fine because $\beta$ witnesses the fact that $\alpha$ is not the final top node along the true path. In this case the reduction $\Delta^{V_j}=V_i$ will be built further down the true path (if $i Rj$) or may not be built at all (if $\neg iRj$).

In the general situation (when considering nodes devoted to other pairs $i',j'$) we consider the nodes $\beta_0,\beta_1\prec\alpha$ where $\beta_0,\beta_1$ have true outcome $0$, $\alpha$ has true outcome $\infty$, $i=i_{\alpha}=i_{\beta_1}$, $j_{\beta_1}=i_{\beta_0}$ and $j_{\beta_0}=j_\alpha=j$. Again if $\alpha$ wants to diagonalize at some $\alpha\con\infty$ stage it must be careful; A number $l$ enumerated in $V_i$ might cause $\beta_1$ to later correct $\Delta_{\beta_1}^{V_{j_{\beta_1}}}$ by changing $V_{j_{\beta_1}}$, which might in turn cause $\beta_0$ to correct $\Delta_{\beta_0}^{V_j}$ by changing $V_j$, destroying the previous diagonalization attempt of $\alpha$. In this case we observe that along the true path, we must have one of the following three situations hold:
\begin{itemize}
\item[(i)] There is some child $\gamma_0$ of $\beta_0$ where $\gamma_0\con\infty$ is along the true path.
\item[(ii)] There is some child $\gamma_1$ of $\beta_1$ where $\gamma_1\con\infty$ is along the true path.
\item[(iii)] Almost every  $\gamma$ working for the same pair $\la i,j \ra$ as $\alpha$ along the true path has true $0$ outcome.
\end{itemize}
One of these three situations must hold since $R$ is transitive. If (iii) holds then it is fine for $\alpha$ to fail in diagonalizing even though it has true outcome $\infty$. If (i) or (ii) holds then $\gamma_v$ (for some $v<2$) witnesses that $\beta_v$ is not the final top node along the true path; In this case $\gamma_v$ will destroy $\Delta_{\beta_v}$ by pushing some $\delta_{\beta_v}(k)$ to infinity. In this case we delay $\alpha$ from diagonalizing until we find that $\Phi_\alpha^{V_j}(l)$ has converged on a number $l$ with use $u$ safely. That is, the combined response of $\beta_0$ and $\beta_1$ to the entry of $l$ into $V_i$ will not change $V_j$ below $u$.

\subsection{Markers for capricious destruction} Each node $\alpha$ which is not a top node is given a parameter $k_\alpha$. At each $\alpha$-expansionary stage, $\alpha$ will enumerate $\delta_\tau(k_\alpha)$ into $V_{j_\tau}$ and lift this marker. Hence if $\alpha\con \infty$ is along the true path then $\delta_\tau(k_\alpha)$ goes to infinity. This is fine because $\tau$ is not the maximal top node along the true path; in this case there may be a top node further down the true path, or there may be none along the true path, and so the Turing reducibility witnessing $V_{i_\tau}\leq_T V_{j_\tau}$ need not be built at $\tau$. The important thing here is to ensure that for each top node $\tau$ where $\tau\con 0$ is along the true path and $\tau$ is not the maximal top node along the true path, we have to destroy the $\tau$ markers and create an interval $(i,\infty)$ where every point is eventually cleared of $\tau$ markers. This allows a conflicting diagonalization strategy of lower priority to succeed.

If $\tau$ is the final top node along the true path then each $\tau$ marker is lifted finitely often this way, and so the Turing reducibility witnessing $V_{i_\tau}\leq_T V_{j_\tau}$ will be built at $\tau$.

\subsection{Believable computations} A computation $\Phi_\alpha^{V_{j_\alpha}}(l)[s]$ with use $u$ is said to be $\alpha$-believable at stage $s$ if the following two conditions are met.
\begin{itemize}
\item For every $\beta\in Z^0(\alpha)$ there is no $\beta$ marker below $u$ which is pending. A $\beta$ marker $\delta_\beta(x)$ is said to be pending at $t$ if $x$ was enumerated into $V_{i_\beta}$ at some $s<t$ where $\delta_\beta(x)[s]$ is defined and $V_{j_\beta}$ has not changed below $\delta_\beta(x)[s]$.
\item There does not exist a sequence of distinct nodes $\beta_0,\cdots,\beta_n\in Z^0(\alpha)$ such that $i_{\beta_n}=i_\alpha$, $j_{\beta_0}=j_\alpha$, and for every $m<n$, $j_{\beta_{m+1}}=i_{\beta_{m}}$, and $\delta_{\beta_m}(l)[s]\downarrow\leq u$.
\end{itemize}
The first item says that a computation is $\alpha$-believable if there is no $\delta$ marker below the use that will be enumerated by some $\beta\prec\alpha$ due to coding. The second item ensures that if $l$ were to be enumerated by $\alpha$ into $V_{i_\alpha}$ for the sake of diagonalization then the resulting sequence of correction actions will not cause the computation to be later destroyed.

\subsection{Formal construction} At stage $0$ we initialize every node. This means to reset $\delta_\alpha$ and $k_\alpha$ to be undefined. At stage $s>0$ we define $TP_s$ of length $s$, where $TP_s$ is the stage $s$ approximation to the true path of construction. Assume that $\alpha\prec TP_s$ has been defined. If $|X_\alpha|$ has increased since the last $\alpha$-stage then we play outcome $\infty$ for $\alpha$, otherwise play outcome $0$ for $\alpha$. Initialize every node to the right of $\alpha\con o$ where $o$ is the outcome played. We now act for $\alpha$. There are four cases.
\begin{itemize}
\item[(i)] $\alpha$ is not a top node and outcome $0$ is played. Do nothing.
\item[(ii)] $\alpha$ is a top node and outcome $\infty$ is played. Initialize $\alpha$. Check if it is possible to diagonalize, and if so, do it. This means to check if there exists some $l<s$ such that  $\Phi_\alpha^{V_{j_\alpha}}(l)[s]\downarrow$ is $\alpha$-believable, $l\in \omega^{[\alpha]}-V_{i_\alpha}[s]$, and for every $l'\leq l$ we have $V_{i_\alpha}(l')=\Phi_\alpha^{V_{j_\alpha}}(l')[s]$. We also require $l$ to be larger than the previous stage where $\alpha$ was initialized by another node. If such $l$ is found enumerate $l$ into $V_{i_\alpha}$ and initialize every node extending $\alpha$.
\item[(iii)] $\alpha$ is not a top node and outcome $\infty$ is played. Let $\tau$ be the top of $\alpha$. If $k_\alpha\uparrow$ we pick a fresh value for it. Otherwise if $\delta_\tau(k_\alpha)\downarrow$ enumerate it into $V_{j_\tau}$ and make it (and all larger $\tau$ marker) undefined. Check if it is possible to diagonalize, and if so, do it.
\item[(iv)] $\alpha$ is a top node and outcome $0$ is played. We will correct and extend the Turing reduction $\Delta_\alpha$. For correction, we check to see if there is any number $l$ enumerated into $V_{i_\alpha}$ since the last visit to $\alpha$. Let $l$ be the least. If $\delta_\alpha(l)\downarrow$ we enumerate it in $V_{j_\alpha}$ and make $\delta_\alpha(l)\uparrow$. To extend $\Delta_\alpha$ we let $k<s$ be the least such that $\delta_\alpha(k)\uparrow$. We pick a fresh value $x\in\omega^{[\alpha]}$ and set $\delta_\alpha(k)\downarrow=x$.
\end{itemize}

\subsection{Verification} Let $TP=\lim\inf_s TP_s$ be the true path of the construction, which clearly exists as the construction tree is finitely branching. We first prove the following key lemma.

\begin{lemma}\label{lem:pi4key}Suppose $\alpha$ is a node on the true path. Then $\alpha$ makes finitely many diagonalization attempts.
\end{lemma}
\begin{pf}
Assume we are at a stage of the construction where each $\beta\prec\alpha$ makes no more diagonalization attempt, and we never move left of $\alpha$. Suppose $\alpha$ makes infinitely many diagonalization attempts after this. Let $l$ be the smallest number put in by $\alpha$ due to a diagonalization attempt, at some stage $t$. At $t$ we have $\Phi_\alpha^{V_{j_\alpha}}(l)[t]\downarrow=0$ with use $u$, and is an $\alpha$-believable computation. We claim that at $t$, $V_{j_\alpha}$ is stable below $u$, which contradicts the assumption that infinitely many attempts are made.

By convention $u<t$, and so the only nodes which can enumerate a number less than $u$ into any set after stage $t$, are nodes of the form $\beta\preceq\alpha$. In fact we cannot have $\beta=\alpha$ because if $\alpha$ is a top node then it is initialized at $t$ and any further diagonalization action by $\alpha$ involves a number $l'>l$, and if $\alpha$ is not a top node then $k_\alpha$ or $\delta_\tau(k_\alpha)$ is undefined before the diagonalization at $t$ (here $\tau$ is the top of $\alpha$). Similarly we cannot have $\beta\con\infty\preceq\alpha$, because if $\beta$ is a top node then it is initialized at $t$ and never again performs diagonalization, and if $\beta$ is not a top node then $k_\beta$ or $\delta_\tau(k_\beta)$ is undefined when $\alpha$ was visited at $t$. Hence if $\beta$ is a node which enumerates a number smaller than $u$ into any set after $t$, it must satisfy $\beta_0\in Z^0(\alpha)$.

Let $\beta_0\in Z^0(\alpha)$ be the first node after $t$ to change $V_{j_\alpha}$ below $u$, say at $t_0>t$. At $t_0$, $\beta_0$ is correcting $\Delta_{\beta_0}$ and responding to the action of $\beta_1$ at $t_1$, $t_1<t_0$. Let $l_0$ be the number which was enumerated by $\beta_1$ into $V_{i_{\beta_0}}$ at $t_1$. We cannot have $t_1<t$ because of $\alpha$-believability at $t$. Suppose $t_1>t$. Since $l<\delta(l)$ for every $l,\delta$, we have $\beta_1\in Z^0(\alpha)$. Then we have correction at $t_1$ and so $j_{\beta_1}=i_{\beta_0}$. We can then repeat to get a sequence $\beta_0,\cdots,\beta_{n}\in Z^0(\alpha)$ and $t_0>\cdots>t_n>t_{n+1}=t$, where
$\beta_{n+1}=\alpha$, $j_{\beta_0}=j_\alpha$, $i_{\beta_n}=i_\alpha$, and for every $m<n$, $j_{\beta_{m+1}}=i_{\beta_{m}}$. Furthermore for every $m\leq n$, $l_{m}<\delta_{\beta_{m}}(l_{m})=l_{m-1}$, and $l_n=l$. Since $\delta_{\beta_m}(l_m)[t_m]\downarrow<u$ we have $\delta_{\beta_m}(l_m)[t]\downarrow<u$ (since $\delta$ markers are picked fresh), and so $\delta_{\beta_m}(l)[t]\downarrow< u$. By removing any cycles in the sequence, we may assume that the nodes $\beta_0,\cdots,\beta_n$ are distinct. This contradicts the second requirement for $\alpha$-believability at $t$.
\qed
\end{pf}

We devote the remainder of the proof to showing that each requirement $S_{i,j}$ is met. Fix $i$ and $j$ such that $iRj$. Let $\alpha$ be the shortest node along $TP$ such that $i_\alpha=i$, $j_\alpha=j$ and for every node $\beta$ such that $i_\beta=i$ and $j_\beta=j$ where $\alpha\preceq\beta\prec TP$, we have $\beta\con 0\preceq TP$. Clearly $\alpha$ is a top node. We argue that $V_i=\Delta_\alpha^{V_j}$ (by a symmetric argument we can then conclude $V_i\equiv_T V_j$). Since $\alpha$ has true $0$ outcome there are finitely many self-initializations, and by Lemma \ref{lem:pi4key}, there are only finitely many initializations to $\alpha$. The marker rules (i), (ii) and (iii) are clearly met for $\delta_\alpha$. We argue (iv) holds. Since $\alpha$ is visited infinitely often it suffices to check that each marker is made undefined finitely often. Each $\delta_\alpha(k)$ can only be made undefined by some child of $\alpha$, and only when $k=k_\alpha$. A child of $\alpha$ to the left of the true path only does this finitely often, while a child to the right of the true path has $k_\alpha=k$ at finitely many stages. A child on the true path must have true $0$ outcome and acts at only finitely many stages.

Now fix $i$ and $j$ such that $\neg i R j$. Fix an $e$, and let $\alpha$ be the node on the true path such that $i_\alpha=i,j_\alpha=j,e_\alpha=e$ and $\alpha\con\infty\prec TP$. We argue that $\Phi_\alpha^{V_j}\neq V_i$. Suppose that $\Phi_\alpha^{V_j}= V_i$.

\begin{lemma}\label{lem:pi42}For almost every $l$, the true computation $\Phi_\alpha^{V_j}(l)$ is eventually $\alpha$-believable.
\end{lemma}
\begin{pf}Call a sequence of distinct nodes $\beta_0,\cdots,\beta_n\in Z^0(\alpha)$ such that $i_{\beta_n}=i_\alpha$, $j_{\beta_0}=j_\alpha$, and for every $m<n$, $j_{\beta_{m+1}}=i_{\beta_{m}}$, a bad sequence. There are only finitely many bad sequences, and for each bad sequence there exists a node $\gamma$ which is a child of some $\beta_m$ such that $\gamma\con\infty\prec TP$. This follows by the transitivity of $R$. Since $\gamma$ is not a top node it is initialized finitely often and so achieves a stable value for $k_\gamma$. We consider $l$ larger than every $k_\gamma$ and argue that $\Phi_\alpha^{V_j}(l)$ is eventually $\alpha$-believable.

Fix $l$ and fix a large stage $s$ such that $\alpha$ is visited and $\Phi_\alpha^{V_j}(l)[s]\downarrow$ with the correct use $u$. The first requirement for $\alpha$-believability must be met because any pending marker from any node in $Z^0(\alpha)$ must be enumerated in $V_{j}$ below $u$ before the next visit to $\alpha$. For the second requirement, we can assume that $s$ is large enough such that for each bad sequence and associated $\gamma$, we have $\delta_{\beta_m}(k_\gamma)[s]>u$. This is possible because at each visit to $\gamma\con\infty$ we make $\delta_{\beta_m}(k_\gamma)$ undefined.

Let us assume the second requirement for $\alpha$-believability fails. Fix a bad sequence witnessing this, and let $\gamma$ be the associated node. By the failure of believability we have $\delta_{\beta_m}(l)[s]\leq u$; On the other hand by the choice of $s$ we have $\delta_{\beta_m}(l)[s]>\delta_{\beta_m}(k_\gamma)[s]>u$, a contradiction.
\qed
\end{pf}

By Lemma \ref{lem:pi4key}, there are only finitely many initializations to $\alpha$ initiated by a different node; Let $s$ be large enough so that there are no more initializations to $\alpha$ of this type. By Lemma \ref{lem:pi42} pick $l$ large enough, and wait for $\Phi_\alpha^{V_j}(l)$ to become $\alpha$-believable, and for $\Phi^{V_j}_\alpha$ to agree with $V_i$ below $l$. Hence $\alpha$ will make another diagonalization attempt after $s$. Since this holds for any large $s$, we have a contradiction to Lemma  \ref{lem:pi4key}. This ends the proof of Theorem \ref{thm:sigma4}.
\end{proof}

We also obtain as a corollary, examples of complete equivalence relations at the $\Sigma^0_{n+4}$ level for each $n\in\omega$:

\begin{cor}
\label{cor:Turing}
For each $n\in\omega$, the preorder $\{\la i,j \ra\mid W_i^{\emptyset^{(n)}}\leq_T W_j^{\emptyset^{(n)}}\}$ is  $\SI {n+4}$-complete.
\end{cor}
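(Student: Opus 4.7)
The plan is to relativize Theorem~\ref{thm:sigma4} to the oracle $\emptyset^{(n)}$ and then to absorb that oracle into the indexing.

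\emph{The upper bound.} The matrix of the unfolded definition
\[ W_i^{\emptyset^{(n)}} \leT W_j^{\emptyset^{(n)}} \lra \ex e \, \fa x \, \ex s, y \, [\Phi_e^{W_j^{\emptyset^{(n)}}}(x)[s] \converges = y \lland y = W_i^{\emptyset^{(n)}}(x)] \]
is a Boolean combination of $\SI{n+1}$ conditions (both the oracle queries against $W_j^{\emptyset^{(n)}}$ and the value check against $W_i^{\emptyset^{(n)}}$ sit at that level), so the entire formula is $\SI{n+4}$.

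\emph{Hardness.} Given a $\SI{n+4}$ preorder $R$, view it as a $\SI 4$ preorder relative to $\emptyset^{(n)}$. The approximation sequence $X^{i,j}_{e,p}$ from \cite{Nies:97*2} that drives the proof of Theorem~\ref{thm:sigma4} then becomes uniformly r.e.\ in $\emptyset^{(n)}$, and I would rerun the entire priority-tree construction — choice of true path, placement and correction of markers $\delta_\alpha$, evaluation of $\alpha$-believability, enumerations into each $V_i$ and $V_{j_\alpha}$ — verbatim, now read relative to $\emptyset^{(n)}$. The verification lemmas, in particular Lemmas~\ref{lem:pi4key} and~\ref{lem:pi42}, depend only on the combinatorics of the tree and on enumeration properties of the $X^{i,j}_{e,p}$ and so translate line-by-line. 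The outcome is a family $(V_i)\sN i$ of sets r.e.\ in $\emptyset^{(n)}$, uniformly in $i$ relative to $\emptyset^{(n)}$, with $iRj \lra V_i \leT V_j$.

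\emph{Converting to a computable reduction.} Uniform r.e.-ness in $\emptyset^{(n)}$ makes the set $\{\la i, x\ra \colon x \in V_i\}$ itself r.e.\ in $\emptyset^{(n)}$, so the $s$-$m$-$n$ theorem relativized to $\emptyset^{(n)}$ delivers a \emph{computable} total function $g$ with $W_{g(i)}^{\emptyset^{(n)}} = V_i$; this $g$ is the required computable reduction from $R$ to $\{\la i,j\ra \colon W_i^{\emptyset^{(n)}} \leT W_j^{\emptyset^{(n)}}\}$, and the corresponding equivalence relation follows by Fact~\ref{fact:preorder}. The main thing to watch for is that no step of the original proof silently exploits the computability of the $X^{i,j}_{e,p}$ in a way that would degrade under relativization; inspecting the construction, each reference to the sequence is through stage-indexed enumeration, which remains plain enumeration relative to the stronger oracle, so no obstruction arises.
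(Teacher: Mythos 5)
Your route is the paper's route in outline — relativize Theorem~\ref{thm:sigma4} to $\emptyset^{(n)}$, then use uniformity and s-m-n to turn the $\emptyset^{(n)}$-computable index function into a computable one — but there is a genuine gap at the central claim ``the outcome is a family $(V_i)$ with $iRj \leftrightarrow V_i \leq_T V_j$.'' Relativizing the construction does not give this. In the positive direction, the reduction procedure built at a top node $\alpha$ recovers $V_{i_\alpha}(x)$ from $V_{j_\alpha}$ by running the stage-by-stage construction (locating the markers $\delta_\alpha(x)[s]$, the approximations to the true path, the stages at which $V_{j_\alpha}[s]$ is correct), and in the relativized setting that construction is only $\emptyset^{(n)}$-computable. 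So when $iRj$ you obtain $V_i \leq_T V_j \oplus \emptyset^{(n)}$, not $V_i \leq_T V_j$. (The negative direction is unproblematic: diagonalizing against $\emptyset^{(n)}$-relativized functionals gives $V_i \not\leq_T V_j \oplus \emptyset^{(n)}$, which is stronger than needed.) Consequently the biconditional you hand to the relativized s-m-n theorem is not justified, and the computable $g$ you extract need not be a reduction to $\{\la i,j\ra \mid W_i^{\emptyset^{(n)}} \leq_T W_j^{\emptyset^{(n)}}\}$; nothing in the construction codes $\emptyset^{(n)}$ into the sets $V_i$, so there is no reason the oracle on the right-hand side can be dropped.

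The paper addresses exactly this point. After relativizing it records only $iRj \leftrightarrow W^{\emptyset^{(n)}}_{f(i)} \leq_T W^{\emptyset^{(n)}}_{f(j)} \oplus \emptyset^{(n)}$ with $f \leq_T \emptyset^{(n)}$, then performs two index manipulations: first the uniformization step you do have (a computable $h$ with $W^X_{h(i)} = W^X_{f(i)}$, obtained from a functional whose domain is $\{(m,i) : m \in W^X_{f(i)}\}$ plus s-m-n), and second a computable $\hat f$ with $W^X_{\hat f(i)} = W^X_{f(i)} \oplus X$, which joins the oracle into every set. Since $A \leq_T B \oplus \emptyset^{(n)}$ iff $A \oplus \emptyset^{(n)} \leq_T B \oplus \emptyset^{(n)}$, this $\hat f$ is the desired computable reduction. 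Your proof needs this second step (or, equivalently, a modification of the relativized construction that explicitly codes $\emptyset^{(n)}$ into each $V_i$); as written, the assertion that the verification ``translates line-by-line'' to yield plain Turing reducibility is where the argument breaks. The $\Sigma^0_{n+4}$ upper bound you sketch is fine.
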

\begin{proof}
The proof of Theorem \ref{thm:sigma4} produces a computable function $f$ such that for each $i,j$, we have $iRj$ iff $W_{f(i)}\leq_T W_{f(j)}$. Relativizing this to $\emptyset^{(n)}$, for each $\Sigma^0_{n+4}$ preorder $R$, we get a function $f\leq \emptyset^{(n)}$ such that $iRj$ iff $W^{\emptyset^{(n)}}_{f(i)}\leq_T W^{\emptyset^{(n)}}_{f(j)}\oplus \emptyset^{(n)}$.

For each oracle $X$ and each function $g\leq_T X$ there is a computable function $h$ such that $W^X_{g(i)}=W^X_{h(i)}$ for every $i$. To see this, observe that the set $\{(n,i):n\in W^X_{g(i)}\}$ is $\Sigma^{0,X}_1$ and is hence equal to the domain of $\Psi^X$ for some functional $\Psi$. The function $h$ is obtained by applying the s-m-n Theorem.

Now applying this fact we may assume that $f$ is computable. Let $\hat{f}$ be a computable function such that $W^X_{f(i)}\oplus X=W^X_{\hat{f}(i)}$ for all $X$ and $i$. Then $\hat{f}$ witnesses that $R$ is reducible to the preorder $\{\la i,j \ra\mid W_i^{\emptyset^{(n)}}\leq_T W_j^{\emptyset^{(n)}}\}$.
\end{proof}

\begin{cor}
For each $n\in\omega$, the preorder $\{\la i,j \ra\mid W_i^{(n)}\leq_T W_j^{(n)}\}$ is  $\Sigma^0_{n+4}$-complete.
\end{cor}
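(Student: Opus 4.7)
The plan is to chain the previous corollary with a uniform iterated form of Friedberg's jump-inversion theorem. Given any $\Sigma^0_{n+4}$ preorder $R$, the previous corollary supplies a computable function $\hat{f}$ with $iRj \iff W^{\emptyset^{(n)}}_{\hat{f}(i)} \leq_T W^{\emptyset^{(n)}}_{\hat{f}(j)}$; moreover, since that construction sets $W^X_{\hat{f}(i)} = W^X_{f(i)}\oplus X$, each set $W^{\emptyset^{(n)}}_{\hat{f}(i)}$ is automatically $\geq_T \emptyset^{(n)}$, which will be needed below.

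The key step is to produce a computable function $h$ such that, whenever $W_i^{\emptyset^{(n)}} \geq_T \emptyset^{(n)}$, the r.e.\ set $W_{h(i)}$ satisfies $(W_{h(i)})^{(n)} \equiv_T W_i^{\emptyset^{(n)}}$. For $n=1$ this is the classical Friedberg completeness theorem, which, given a $\Sigma^0_2$ index of a set $C \geq_T \emptyset'$, builds an r.e.\ set $B$ with $B' \equiv_T C$; inspection of the usual proof shows the index of $B$ depends computably on the $\Sigma^0_2$ index of $C$. For general $n$ we iterate: starting from $V_0 = W_i^{\emptyset^{(n)}}$, the uniform Friedberg construction relative to $\emptyset^{(n-1)}$ produces $V_1$ which is $\Sigma^0_n$ with $V_1 \geq_T \emptyset^{(n-1)}$ and $V_1' \equiv_T V_0$; applying uniform Friedberg relative to $\emptyset^{(n-2)}$ to $V_1$ yields $V_2$, and so on down to an r.e.\ set $V_n$ whose $n$-th jump is Turing-equivalent to $V_0$. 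Set $W_{h(i)} = V_n$; the composition of uniform constructions is uniform.

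Composing $g = h\circ\hat{f}$ yields $iRj \iff (W_{g(i)})^{(n)} \leq_T (W_{g(j)})^{(n)}$, establishing completeness. The upper bound is routine: each $W_i^{(n)}$ is $\Sigma^0_{n+1}$, and Turing reducibility between two $\Sigma^0_{n+1}$ sets is $\Sigma^0_{n+4}$ (one existential over the reduction index, a universal over inputs, and a convergence/value statement relative to a $\Sigma^0_{n+1}$ oracle). The principal technical obstacle is verifying that Friedberg jump inversion remains uniform through $n$ nested relativizations; each individual Friedberg construction is uniform in an appropriate $\Sigma_2^X$-style presentation of the input set above $X'$, and one must check that passing the output of one stage as the input of the next preserves this uniformity. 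Granting this, the reduction $g$ is genuinely computable, and the theorem follows.
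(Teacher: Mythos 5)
Your overall route is the same as the paper's: take the reduction $\hat f$ from the previous corollary, use that its images $W^{\emptyset^{(n)}}_{\hat f(i)}=W^{\emptyset^{(n)}}_{f(i)}\oplus\emptyset^{(n)}$ lie above $\emptyset^{(n)}$ (and are r.e.\ in $\emptyset^{(n)}$), and then pull back by a uniform $n$-fold jump inversion into the r.e.\ sets; your upper-bound computation is also fine. The one genuine problem is the tool you invoke for the inversion step. What you need --- from a $\Sigma^0_2$ index of a set $C\geq_T\emptyset'$, uniformly an r.e.\ set $B$ with $B'\equiv_T C$ --- is not Friedberg's completeness theorem but the Sacks Jump Theorem. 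Friedberg's theorem takes an arbitrary $C\geq_T\emptyset'$ (no $\Sigma^0_2$ hypothesis is needed or used) and produces some set $B\leq_T C$ with $B'\equiv_T B\oplus\emptyset'\equiv_T C$; its usual finite-extension/coding proof gives no control over the enumerability of $B$, so ``inspection of the usual proof'' does not yield an r.e.\ $B$ --- and r.e.-ness is exactly the point here, since the target preorder lives on $n$-th jumps of r.e.\ sets, and it is also what makes each stage of your iteration (getting $V_{m}$ r.e.\ in $\emptyset^{(n-m)}$ and above it) go through. The statement you need is true and uniform, but it is the relativized Sacks jump inversion for r.e.\ degrees (every degree r.e.\ in and above $\mathbf{0}'$ is the jump of an r.e.\ degree, uniformly in the $\Sigma^0_2$ index), proved by an infinite-injury argument; iterating it down the levels $\emptyset^{(n-1)},\dots,\emptyset$ exactly as you describe yields a computable $q_n$ with $W_{q_n(x)}^{(n)}\equiv_T W_x^{\emptyset^{(n)}}\oplus\emptyset^{(n)}$, which is precisely what the paper cites (Soare, Cor.\ VIII.3.6). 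With that substitution, your argument coincides with the paper's proof.
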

\begin{proof} The proof of Corollary \ref{cor:Turing} shows that the preorder $\{\la i,j \ra\mid W_i^{\emptyset^{(n)}}\oplus\emptyset^{(n)}\leq_T W_j^{\emptyset^{(n)}}\oplus\emptyset^{(n)}\}$ is complete among the $\Sigma^0_{n+4}$ preorders. By the uniformity of Sacks' Jump Inversion Theorem (see \cite[Corollary VIII.3.6.]{Soare:87}), for each $n$ there is a computable function $q_n$ such that for all $x$, we have $W_{q_n(x)}^{(n)}\equiv_T W_x^{\emptyset^{(n)}}\oplus\emptyset^{(n)}$.
\end{proof}

\begin{cor}For each $n\in\omega$ the relation $$ \{\la i,j \ra: W_i^{\emptyset^{(n+1)}}\equiv_1 W_j^{\emptyset^{(n+1)}}\}$$
is $\Sigma^0_{n+4}$-complete.\end{cor}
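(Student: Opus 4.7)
The plan is to relativize the Fokina--Friedman--Nies result \cite{Fokina.Friedman.ea:12} (that $\equiv_1$ on r.e.\ sets is a $\Sigma^0_3$-complete equivalence relation) to the oracle $\emptyset^{(n+1)}$, and then convert the resulting $\emptyset^{(n+1)}$-computable reduction into a genuinely computable one using exactly the s-m-n trick already employed in the proof of Corollary~\ref{cor:Turing}.

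The upper bound is a routine calculation. Unrelativized, $W_i\equiv_1 W_j$ asserts the existence of indices of total injective computable functions implementing $1$-reductions in both directions, and is $\Sigma^0_3$. Replacing ``r.e.'' by ``$\emptyset^{(n+1)}$-r.e.'' throughout this verification merely relativizes the membership facts to $\emptyset^{(n+1)}$ and yields $\Sigma^{0,\emptyset^{(n+1)}}_3 = \Sigma^0_{n+4}$.

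For completeness, let $E$ be an arbitrary $\Sigma^0_{n+4}$ equivalence relation, equivalently a $\Sigma^{0,\emptyset^{(n+1)}}_3$ equivalence relation. Relativizing the Fokina--Friedman--Nies construction to $\emptyset^{(n+1)}$ produces a function $f\leq_T \emptyset^{(n+1)}$ such that, for all $i,j$,
$$i\mathrel{E} j \iff W^{\emptyset^{(n+1)}}_{f(i)}\equiv_1 W^{\emptyset^{(n+1)}}_{f(j)}.$$
Now apply the argument from the proof of Corollary~\ref{cor:Turing}: the set $\{\la m,i\ra : m\in W^{\emptyset^{(n+1)}}_{f(i)}\}$ is $\Sigma^{0,\emptyset^{(n+1)}}_1$, so by the s-m-n theorem there is a computable function $h$ with $W^{\emptyset^{(n+1)}}_{h(i)} = W^{\emptyset^{(n+1)}}_{f(i)}$ for every $i$. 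Then $h$ is a computable function witnessing $E\leq_c \{\la i,j\ra : W^{\emptyset^{(n+1)}}_i\equiv_1 W^{\emptyset^{(n+1)}}_j\}$.

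The only non-mechanical step is verifying that the Fokina--Friedman--Nies construction relativizes uniformly to an arbitrary oracle, producing its indices in an $X$-computable fashion from a $\Sigma^{0,X}_3$ description of the given equivalence relation. Since their argument is a direct effective construction of r.e.\ sets diagonalizing against $1$-reductions, driven by an effective enumeration of the $\Sigma^0_3$ approximation of the given relation, this uniform relativization should go through without modification; this is the only place where one needs to appeal explicitly to the details of \cite{Fokina.Friedman.ea:12}.
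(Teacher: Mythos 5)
There is a genuine gap, and it sits exactly at your displayed biconditional. Relativizing the Fokina--Friedman--Nies construction to $X=\emptyset^{(n+1)}$ does not give $i\mathrel{E}j \iff W^{X}_{f(i)}\equiv_1 W^{X}_{f(j)}$ with plain $\equiv_1$; it gives $\equiv_1$ \emph{relative to} $X$. The negative side is harmless (diagonalizing against all $X$-computable $1$-reductions in particular defeats all computable ones), but on the positive side the construction must \emph{build} $1$-reductions between the sets it enumerates, and once the construction is run with oracle $X$ those reductions are only $X$-computable. You only mention the diagonalization side when you assert that the relativization "goes through without modification", so the step from $\equiv_1^{X}$ to $\equiv_1$ is never addressed. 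The two relations genuinely differ on $X$-r.e.\ sets: for $X=\emptyset'$, the sets $\emptyset'$ and $\overline{\emptyset'}$ are both $X$-computable (hence $X$-r.e.) and are $1$-equivalent via an $X$-computable permutation, but $\emptyset'\not\leq_1\overline{\emptyset'}$ since $\emptyset'$ is not $\Pi^0_1$. So what your argument proves (modulo the uniform relativization, and your s-m-n step, which is fine) is $\SI{n+4}$-completeness of $\{\la i,j\ra: W_i^{\emptyset^{(n+1)}}\equiv_1^{\emptyset^{(n+1)}} W_j^{\emptyset^{(n+1)}}\}$, which is not the stated relation, and there is no general way to upgrade an oracle $1$-reduction to a computable one.

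The intended derivation — consistent with the corollary's placement after Corollary~\ref{cor:Turing} and with the shift of the oracle from $\emptyset^{(n)}$ to $\emptyset^{(n+1)}$ — goes through the jump rather than through a relativized FFN construction. Use the classical fact that $A\leq_T B$ iff $A'\leq_1 B'$ (plain $1$-reducibility). The set $(W_i^{\emptyset^{(n)}})'$ is $\Sigma^0_{n+2}$ uniformly in $i$, so there is a computable $g$ with $W^{\emptyset^{(n+1)}}_{g(i)}=(W_i^{\emptyset^{(n)}})'$; then $W_i^{\emptyset^{(n)}}\leq_T W_j^{\emptyset^{(n)}}$ iff $W^{\emptyset^{(n+1)}}_{g(i)}\leq_1 W^{\emptyset^{(n+1)}}_{g(j)}$, and Corollary~\ref{cor:Turing} together with Fact~\ref{fact:preorder} yields the $\SI{n+4}$-completeness of the stated relation with genuinely computable $1$-reductions (your upper-bound computation, which is correct for plain $\equiv_1$, completes the argument). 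If you want to salvage your route instead, you would have to open up the FFN construction and show that the positive requirements can be met by computable (not merely $\emptyset^{(n+1)}$-computable) reductions between $\emptyset^{(n+1)}$-r.e.\ sets, which is a substantive modification, not a routine relativization.
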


\section{Discussion and open questions}

Many   effective equivalence relations from algebra can be considered under the aspect of relative complexity. For instance:
\begin{question}
	Is the equivalence relation  $E$ of isomorphism between  finite presentations of  groups $\SI 1$-complete?
\end{question}
Adyan and Rabin    independently showed  in 1958 that  the triviality problem, whether a finite presentation describes the trivial group, is $m$-complete. See  Lyndon and Schupp~\cite[IV.4.1]{Lyndon.Schupp:77}, letting the given group $H$ there have a  word problem $m$-equivalent to the halting problem. In fact, by the discussion following the proof there, every single equivalence class of $E$ is $m$-complete: Being isomorphic to a particular finitely presented group $P$  is incompatible with free products in their sense because $P*A$ has higher rank than $P$ for any nontrivial group~$A$.

\begin{question}
	Is the equivalence relation  $E$ of isomorphism between automatic equivalence relations  $\PI 1$-complete?
\end{question}
  Kuske, Liu and Lohrey \cite{MR2953905} showed that this    is $\PI 1 $-complete as a set of pairs.  A similar question can be asked about the $\PI 1 $ relation of elementary equivalence of automatic structures for the same finite signature.

    Computable isomorphism of  {computable}   Boolean algebras  is $\SI 3$ complete by~\cite{Fokina.Friedman.ea:12}. By \cite{Fokina.Friedman.Harizanov.Knight.McCoy.Montalban:10}, isomorphism on  many natural classes of computable structures, such as graphs, is  a  $\Sigma^1_1$-complete equivalence relation. 
The third author has  observed that isomorphism of computable Boolean algebras also is  a $\Sigma^1_1$-complete equivalence relation. This uses the fact  that the coding of graphs into countable Boolean algebras from \cite{Camerlo.Gao:01}  is effective. Thus, uniformly in a computable graph it produces a computable Boolean algebra.


%

\end{document}